\numberwithin{equation}{section}
\numberwithin{figure}{section}
\theoremstyle{plain}
\newtheorem{thm}{\protect\theoremname}[section]
\theoremstyle{definition}
\newtheorem{defn}[thm]{\protect\definitionname}
\theoremstyle{remark}
\newtheorem*{rem*}{\protect\remarkname}
\theoremstyle{plain}
\newtheorem{fact}[thm]{\protect\factname}
\theoremstyle{plain}
\newtheorem{lem}[thm]{\protect\lemmaname}
\theoremstyle{plain}
\newtheorem{cor}[thm]{\protect\corollaryname}
\theoremstyle{definition}
\newtheorem{example}[thm]{\protect\examplename}
\newcommand{\sub}{\subseteq}
\providecommand{\corollaryname}{Corollary}
\providecommand{\definitionname}{Definition}
\providecommand{\examplename}{Example}
\providecommand{\factname}{Fact}
\providecommand{\lemmaname}{Lemma}
\providecommand{\remarkname}{Remark}
\providecommand{\theoremname}{Theorem}
\begin{document}

\title{Definable One-Dimensional Topologies in O-minimal Structures}

\author{Ya\textquoteright acov Peterzil}

\address{Department of Mathematics, University of Haifa, Haifa, Israel}

\email{kobi@math.haifa.ac.il}

\author{Ayala Rosel}

\address{Department of Mathematics, University of Haifa, Haifa, Israel}

\email{ayalarosel@gmail.com}
\begin{abstract}
We consider definable topological spaces of dimension one in o-minimal
structures, and state several equivalent conditions for when such
a topological space $\left(X,\tau\right)$ is definably homeomorphic
to an affine definable space (namely, a definable subset of $M^{n}$
with the induced subspace topology). One of the main results says
that it is sufficient for $X$ to be regular and decompose into finitely
many definably connected components.
\end{abstract}

\maketitle

\section{Introduction}

The goal of this article is to study definable one-dimensional Hausdorff
topologies in o-minimal structures, and  to understand when they are definably homeomorphic
to a definable set in some\emph{ $M^{n}$ }with its affine topology
(namely, the induced subspace topology from \emph{$M^{n}$}).

When we say that $\tau$ is a definable topology on a definable set
$X$, we mean that $\tau$ has a basis which is definable in the language
of the underlying o-minimal structure.\\

Our main theorem consists of several equivalent conditions to $\left(X,\tau\right)$
being definably homeomorphic to a definable set with its affine topology.
It is a combination of Theorem \ref{thm: TFAE affine} and Theorem
\ref{thm: Condition =0000235 affine}:\\
\\
\textbf{Main theorem.}\emph{ Let $\mathcal{M}$ be an o-minimal expansion
of an ordered group. Let $X\subseteq M^{n}$ be a definable bounded set
with $\dim X=1$, and let $\tau$ be a definable Hausdorff topology
on $X$. Then the following are equivalent: }
\begin{enumerate}

\item \emph{$\left(X,\tau\right)$ is definably homeomorphic to a definable
subset of $M^{k}$ for some $k$, with its affine topology. }

\item \emph{There is a finite set $G\subseteq X$ such that every $\tau$-open
subset of $X\setminus G$ is open with respect to the affine topology
on $X\setminus G$. }

\item \emph{Every definable subset of $X$ has finitely many definably connected
components, with respect to $\tau$.}
\item \emph{$\tau$ is regular and $X$ has finitely many definably connected
components, with respect to $\tau$.}

\end{enumerate}

\noindent{\em Note: If $\mathcal M$ expands a real closed field then every definable set is in definable bijection with a bounded set, so 
 the assumption that $X$ is bounded could  be omitted.}
\vspace{.1cm}
 
We mention here a theorem of Erik Walsberg, which says that a definable
metric space in an o-minimal expansion of a real closed field is definably homeomorphic to
a definable set equipped with its affine topology if and only if  it does not
contain any infinite definable discrete set. This theorem can be found
in \cite{Walsberg}, and we shall phrase it more precisely later on.
Inspired by this work, we study general definable topological spaces,
but restrict our attention to dimension $1$.  \\

{\em The results of this article were part of the M.Sc. thesis of the second author
at the University of Haifa. 
After the submission of the thesis we learned that Pablo
Andujar Guerrero, Margaret Thomas and Erik Walsberg are working, independently, on
similar questions.  Finally, we thank the anonymous referee for some useful suggestions.}

\section{Basic definitions }

Below we take ``definable'' to mean ``definable with parameters''.
\begin{defn}
Let $\mathcal{M}=\left(M;\ldots\right)$ be a first-order structure
of a fixed language $\mathcal{L}$.  We say that {\em a collection $\mathcal B$ of subsets of $M^n$ is  definable (in $\mathcal M$)} if there exists an $\mathcal{L}$-formula $\varphi(\bar{x},\bar{y})$ such that $$\mathcal B=\{\varphi(M^n,\bar{b}):\bar{b}\in M^m\}.$$

Let $X\subseteq M^{n}$ be a definable
set. If $\mathcal{B}$ as above forms a basis for a topology $\tau$ on $X$, then we
say that $\tau$ is a \emph{definable topology} on $X$. (This is
the third possibility of considering topological structures from a
model-theoretic point of view due to Pillay's \cite{Pillay},
page 764, where it is named a \emph{first-order topological structure}.)

Note that a basis for the neighborhoods of $\bar{a}\in X$ is given
by
\[
\mathcal{B}_{\bar{a}}=\left\{ \varphi\left(X,\bar{b}\right):\models \varphi\left(\bar{a},\bar{b}\right),\bar{b}\in M^m\right\} =\left\{ U\in\mathcal{B}:\bar{a}\in U\right\} .
\]
\end{defn}

From now on, all topological operations, like closure or interior,
are taken with respect to the underlying topology $\tau$, unless
otherwise stated. The closure and interior of a subset $Z\subseteq X$
is denoted by $cl\left(Z\right)$ and $int\left(Z\right)$, respectively.
\\

This article investigates definable topologies in o-minimal structures.
We fix $\mathcal{M}=\left(M;<,\ldots\right)$ o-minimal and list some
examples of definable topologies in $\mathcal{M}$. All the following
examples can be defined on $X=M^{n}$, unless otherwise stated. We
can then also consider the induced topology $\tau|_{Y}$ for a definable
set $Y\subseteq X$.
\begin{enumerate}
\item The order topology on $M$, which we denote by $\tau^{<}$.
\item The affine topology $\tau^{af}$ which is the product topology with
respect to $\tau^{<}$.
\item In \cite{Dries}, the notion of a definable space is introduced, based
on a finite atlas where each chart is modeled on a definable subset
of some $M^{n}$, with its induced affine topology. It is easy to
verify that the associated topology is definable.
\item The discrete topology, which we denote by $\tau^{iso}$.
\item The left-closed topology on $M$, with the left closed-intervals as
basic open (also closed) sets. We denote this topology by $\tau^{^{\left[\,\,\right)}}$.
\item Every definable linear ordering $\prec$ on a definable set $X\subseteq M^{n}$
gives rise to a definable topology on $X$, namely the order topology
with respect to $\prec\,$. In \cite{Onshuus_Steinhorn}, Alf Onshuus
and Charles Steinhorn study such definable linear ordering in o-minimal
structures, and show that they are \textquotedblleft piecewise lexicographic\textquotedblright .
\item A definable metric topology $\tau^{d}$:
Let $\mathcal{M}=\left(M;<,+,\cdot,\ldots\right)$ be an o-minimal
expansion of a real closed field. A \emph{definable metric} on $X\subseteq M^{n}$
is a definable function $d:X^{2}\rightarrow M_{+}$ such that for
all $x,y,z\in X$: 1.~$d(x,y)=0\Leftrightarrow x=y.$ 2.~$d(x,y)=d(y,x)$.
3.~$d(x,z)\leq d(x,y)+d(y,z)$. The topology $\tau^{d}$ on $X$
is the topology whose basis is the collection of open balls with respect
to $d$.
\end{enumerate}
The following theorem is due to Erik Walsberg, \cite{Walsberg}:
\begin{thm}
Let $(X,d)$ be a definable metric space in an o-minimal expansion
of a real closed field. The following are equivalent:

(1) $(X,\tau^{d})$ is definably homeomorphic to a definable set with
it affine topology.

(2) There is no infinite definable set $A\subseteq X$ such that $(A,d)$
is discrete.
\end{thm}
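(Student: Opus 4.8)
The plan is to prove the two implications separately, with $(1)\Rightarrow(2)$ routine and $(2)\Rightarrow(1)$ carrying all the weight. For $(1)\Rightarrow(2)$: suppose $h\colon(X,\tau^{d})\to(Y,\tau^{af})$ is a definable homeomorphism with $Y\subseteq M^{k}$ definable, and suppose toward a contradiction that $A\subseteq X$ is infinite, definable and $d$-discrete. Then $h(A)\subseteq Y$ is infinite, definable, and discrete in the affine topology. But an infinite definable subset of $M^{k}$ has dimension at least one, so by cell decomposition it contains a one-dimensional cell $C$; for each $c\in C$ every affine neighborhood of $c$ meets $C\setminus\{c\}\subseteq h(A)$, so no point of $C$ is isolated in $h(A)$, contradicting discreteness. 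Hence $(2)$ holds. (Here the o-minimal dimension inequality and curve selection use that $\mathcal M$ expands a real closed field.)

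For $(2)\Rightarrow(1)$ I would argue by induction on $n=\dim X$, using the absence of infinite definable discrete sets to force $\tau^{d}$ to coincide with an affine topology off a lower-dimensional set. The base case $n=0$ is immediate: an o-minimal definable set of dimension $0$ is finite, so $(X,\tau^{d})$ is a finite discrete space, which is trivially affine. For the inductive step I would first apply cell decomposition to the definable function $d\colon X^{2}\to M$ to produce a definable affine-open dense $U\subseteq X$ with $\dim(X\setminus U)<n$ on which $d$ is affine-continuous in each variable; on such a $U$ each metric ball $B_{d}(x,\varepsilon)$ is affine-open, so $\tau^{d}|_{U}$ refines $\tau^{af}|_{U}$. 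The key point is to upgrade this to an \emph{equality} of the induced topologies on $U$ (up to a definable homeomorphism). Were $\tau^{d}|_{U}$ strictly finer on a positive-dimensional set, then applying definable choice and the monotonicity theorem to the radius functions $\varepsilon\mapsto B_{d}(x,\varepsilon)$ one could extract a definable family of points at which $d$-balls shrink faster than any affine ball; from such a family I would carve out an infinite definable $d$-discrete set, contradicting $(2)$.

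Having identified the affine part $U$, I would apply the induction hypothesis to $X\setminus U$ (which inherits both the restricted metric and the no-infinite-discrete-set condition), obtaining a definable affine realization of it, and then glue. The main obstacle is precisely this gluing: two definable sets that are separately affine need not assemble into an affine set, the doubled-point line being the paradigm of a locally affine, Hausdorff, but non-affine topology. The role of hypothesis $(2)$ is to rule out exactly the boundary pathologies that obstruct gluing, because a doubling or a fold along the frontier $cl(U)\setminus U$ manifests, via curve selection into $(X,\tau^{d})$, as a definable curve whose limit behavior yields an infinite definable discrete set. Concretely, I would control the frontier by showing that for each $x\in cl(U)\setminus U$ the affine embedding of $U$ extends continuously, using that Hausdorffness together with the absence of discrete sets forces a unique $\tau^{d}$-limit along approaching curves.

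Finally I would realize all of $X$ inside some $M^{k}$ by combining the embedding of $U$, the inductively obtained embedding of $X\setminus U$, and finitely many metric coordinates $x\mapsto(d(x,p_{1}),\dots,d(x,p_{m}))$ for a suitable finite $\{p_{1},\dots,p_{m}\}\subseteq X$ chosen to separate the glued strata. Verifying that this combined map is a definable homeomorphism, that is, injective and continuous in both directions, is the crux of the argument, and is where I expect the real work to lie and where the o-minimal tameness of the definable family $\{B_{d}(x,\varepsilon)\}$ together with hypothesis $(2)$ is used most essentially: injectivity and openness of the inverse both come down to excluding the collapse of distinct $\tau^{d}$-neighborhoods, which again would produce an infinite definable discrete set.
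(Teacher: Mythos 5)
First, a point of comparison: the paper does not actually prove this statement. It is quoted as a theorem of Erik Walsberg with a citation to his preprint \cite{Walsberg}, so there is no proof in the paper to measure yours against; the paper's own results concern general definable topologies in dimension one, not definable metrics in arbitrary dimension.

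On the merits of your attempt: the direction $(1)\Rightarrow(2)$ is correct and essentially complete --- an infinite definable subset of $M^{k}$ has positive dimension, hence contains a $1$-cell, no point of which is affinely isolated, so its image cannot be discrete. (Incidentally this uses only o-minimality, not the field structure.) The direction $(2)\Rightarrow(1)$, however, is a strategy outline rather than a proof, and the two steps that carry all the weight are asserted without justification. First, you claim that if $\tau^{d}$ and $\tau^{af}$ fail to agree on a set of full dimension then one can ``carve out'' an infinite definable $d$-discrete set. What one gets cheaply is only this: at a bad point $x$ there is $\varepsilon>0$ and points $y$ affinely arbitrarily close to $x$ with $d(x,y)\geq\varepsilon$; curve selection then yields one point and one definable curve staying metrically far from it. To produce a \emph{discrete} set you need infinitely many points whose pairwise distances are uniformly bounded below, and arranging that uniformity as $x$ varies (the witnessing $\varepsilon$ may degenerate) is precisely the hard o-minimal content of Walsberg's argument; your sketch does not address it. Second, the gluing of the affine realization of $U$ with the inductively obtained realization of $X\setminus U$, and the claim that finitely many distance coordinates $x\mapsto\left(d(x,p_{1}),\dots,d(x,p_{m})\right)$ separate the strata and yield a homeomorphism, are stated as hopes: you give no argument that finitely many $p_{i}$ suffice, nor that the combined map is injective and open onto its image. (Note also that the failure mode here cannot be a ``doubled point,'' since a metric topology is automatically Hausdorff and regular; the obstructions you must rule out are of a different nature.) As it stands the proposal correctly identifies where the difficulty lies but does not overcome it.
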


\begin{rem*}
We note that the analogous result fails for definable topologies in
o-minimal structures. Indeed, consider the topology $\tau^{^{\left[\,\,\right)}}$
on $M$. There is no infinite definable set $A\subseteq X$ such that
$(A,\tau|_{A})$ is discrete, and yet it is not definably homeomorphic
to any definable set with the induced affine topology (e.g. by Theorem
\ref{thm: TFAE affine}).
\end{rem*}
We fix a definable set $X\subseteq M^{n}$ and a definable topology
$\tau$ on $X$ with a definable basis $\mathcal{B}$, and proceed
with some more definitions.
\begin{defn}
$\mathcal{F}\subseteq\mathcal{P}(X)$ is a \emph{filtered collection} if for every $B_{1},B_{2}\in\mathcal{F}$ there exists $B_{3}\in\mathcal{F}$
such that $B_{3}\subseteq B_{1}\cap B_{2}$.
\end{defn}

An example of a filtered collection is a basis for the neighborhoods
of each $\bar{a}\in X$.\\

The following definition was given by Will Johnson in \cite{Johnson}
(see \cite{Peterzil_Steinhorn} for an earlier definition in the o-minimal
setting):
\begin{defn}
$\left(X,\tau\right)$ is \emph{definably compact} if every definable
filtered collection of closed non-empty subsets of X has non-empty
intersection.
\end{defn}

The next lemma is proved in \cite{Johnson} (Corollary 1.11 and the subsequent
paragraph):
\begin{fact}
\label{lem: J} If $\mathcal{M}$ is o-minimal and $X\subseteq M^{n}$
is definable, then \textup{$X$ is }definably compact with respect
to the induced affine topology if and only if  \textup{$X$ is }$\tau^{af}$-closed
and bounded. In particular, every definable filtered collection of
$\tau^{af}$-closed and bounded non-empty subsets, has non-empty intersection.
\end{fact}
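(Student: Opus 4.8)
The plan is to prove the two implications of the stated equivalence and then read off the final sentence as a consequence. Throughout I work inside the affine topology on $M^n$ and freely use the Peterzil--Steinhorn theory of definable compactness from \cite{Peterzil_Steinhorn}: a definable set is $\tau^{af}$-closed and bounded if and only if every definable curve $\gamma:(0,1)\to X$ has a limit in $X$ as the parameter tends to an endpoint, and such sets are closed under definable continuous images and coordinate projections. The ``in particular'' clause will then require no separate argument: if $\{C_i\}$ is a definable filtered family of closed and bounded non-empty sets, fix one member $C_{i_0}$; then $\{C_i\cap C_{i_0}\}$ is a definable filtered family of non-empty (by filteredness) closed subsets of the single closed and bounded set $C_{i_0}$, so once we know that a closed and bounded set is definably compact we obtain $\bigcap_i(C_i\cap C_{i_0})\neq\emptyset$, whence $\bigcap_i C_i\neq\emptyset$.

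For the direction ``definably compact $\Rightarrow$ closed and bounded'' I would argue by contrapositive, producing an explicit definable filtered family of non-empty closed sets with empty intersection. If some coordinate projection $\pi_i(X)$ is unbounded above, the sets $C_r=\{x\in X: x_i\ge r\}$, indexed by $r\in M$, are closed in $X$, non-empty, and filtered since $C_r\cap C_{r'}=C_{\max(r,r')}$, while their total intersection is empty because $M$ has no largest element. If instead $X$ is bounded but not closed, pick a point $p$ in the affine closure of $X$ with $p\notin X$, and let $C$ range over the sets $X\cap\overline B$, where $\overline B$ runs through the closed affine boxes that are neighborhoods of $p$; each such set is closed and non-empty (every affine neighborhood of $p$ meets $X$), the family is filtered because the intersection of two such boxes contains a third, and since the order topology is Hausdorff the total intersection is $X\cap\{p\}=\emptyset$. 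In either case definable compactness fails.

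The substantive direction is ``closed and bounded $\Rightarrow$ definably compact''. Here the clean case is a \emph{monotone one-parameter family}: a definable family $\{C_s\}_{s\in(0,1)}$ of non-empty closed subsets of $X$ with $C_s\subseteq C_{s'}$ whenever $s\le s'$. For such a family I would set $\gamma(s)$ to be the lexicographically least point of $C_s$; this is a well-defined definable curve because each $C_s$ is itself closed and bounded, so the iterated infima defining the lexicographic minimum are attained (projections of closed and bounded sets are again closed and bounded). By the Peterzil--Steinhorn curve-limit theorem the limit $p=\lim_{s\to 0^+}\gamma(s)$ exists and lies in $X$. Now for every fixed $s_0$ and every $s<s_0$ we have $\gamma(s)\in C_s\subseteq C_{s_0}$, so the tail of $\gamma$ lies in the closed set $C_{s_0}$ and hence $p\in C_{s_0}$; as $s_0$ was arbitrary, $p\in\bigcap_s C_s$, so the intersection is non-empty.

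What remains, and what I expect to be the main obstacle, is reducing an arbitrary definable filtered family $\{C_t\}_{t\in T}$ to this monotone one-parameter situation. The index preorder $t_1\preceq t_2\iff C_{t_2}\subseteq C_{t_1}$ is definable and directed; if it has a maximum the intersection equals a single non-empty member and we are done, and otherwise one wants a $\preceq$-increasing definable curve $\sigma:(0,1)\to T$ that is \emph{cofinal} in $\preceq$. Then $\{C_{\sigma(s)}\}$ is a monotone one-parameter family, the previous paragraph gives $\bigcap_s C_{\sigma(s)}\neq\emptyset$, and cofinality forces $\bigcap_s C_{\sigma(s)}=\bigcap_t C_t$. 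Extracting such a cofinal curve is exactly where o-minimal tameness is needed, via curve selection together with the strong constraints on definable orders (the piecewise-lexicographic description of \cite{Onshuus_Steinhorn}). An alternative packaging that avoids the monotonization is to dualize and prove instead that every definable open cover of a closed and bounded set admits a finite subcover: the complements $X\setminus C_t$ would then cover $X$, and a finite subcover would contradict the fact that finite intersections from a filtered family are non-empty, the cover statement itself being provable by induction on $\dim X$ using the same curve-limit theorem. This circle of equivalences is precisely the content of \cite{Johnson}, Corollary 1.11.
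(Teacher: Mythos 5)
The paper offers no proof of this Fact at all: it is quoted verbatim from \cite{Johnson} (Corollary 1.11), so there is no in-paper argument to compare against. Within your proposal, the direction ``definably compact $\Rightarrow$ closed and bounded'' is correct and complete (both witnessing families -- $\{x\in X:x_i\ge r\}$ for unboundedness, and $X\cap\overline{B}$ for closed boxes $\overline{B}$ around a missing closure point -- are definable, filtered, closed in $X$, non-empty, and have empty intersection), and so is the reduction of the ``in particular'' clause to the biconditional by intersecting with one fixed member. The monotone one-parameter case of the converse, via the lexicographic minimum and the curve-limit criterion of \cite{Peterzil_Steinhorn}, is also sound.

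The genuine gap is the step you yourself flag: passing from an arbitrary definable filtered family $\{C_t\}_{t\in T}$ to a monotone family indexed by an interval. The existence of a definable curve $\sigma:(0,1)\to T$ cofinal in the directed preorder $t_1\preceq t_2\iff C_{t_2}\subseteq C_{t_1}$ does not follow from the tools you invoke: $T$ may have arbitrary dimension, $\preceq$ is only a definable directed \emph{preorder} (so the piecewise-lexicographic classification of definable \emph{linear orders} in \cite{Onshuus_Steinhorn} does not apply), and curve selection produces curves converging to a point, not curves cofinal in a directed family. That cofinal-curve statement is itself a substantial theorem -- essentially the subject of the independent work of Andujar Guerrero, Thomas and Walsberg mentioned in the introduction -- so as written your argument defers its only hard step back to \cite{Johnson}. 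A self-contained and more elementary route avoids cofinality entirely, by induction on $n$. For $n=1$: each $C_t$ is a finite union of points and closed intervals, so $f(t)=\min C_t$ is definable; its image is a bounded definable subset of $M$ with supremum $s\in M$, and if $s\notin C_{t_0}$ then closedness of $C_{t_0}$ gives an interval $(s-\epsilon,s+\epsilon)$ missing $C_{t_0}$, while filteredness produces $t_2$ with $C_{t_2}\subseteq C_{t_0}\cap C_{t_1}$ and $f(t_2)\in(s-\epsilon,s]\subseteq C_{t_0}^{\,c}\cap C_{t_0}$, a contradiction; hence $s\in\bigcap_t C_t$. For $n>1$: apply the case $n=1$ to the projected family $\{\pi(C_t)\}$ (projections of definable closed bounded sets are closed and bounded), pick $s$ in its intersection, and apply the inductive hypothesis to the fibre family $\{C_t\cap\pi^{-1}(s)\}$, which is again definable, filtered, closed, bounded and non-empty. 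I would replace the cofinal-curve step with this induction.
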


Thus, the definition above is equivalent in the o-minimal setting, for the affine topology, to the one in \cite{Peterzil_Steinhorn}.

\medskip{}

\textbf{In our context, whenever we say that
a set is definably compact, we mean that it is definably compact with
respect to the induced affine topology. }\\

We recall:
\begin{defn}
\label{def: regular} $\left(X,\tau\right)$ is \emph{regular} if
for every $\bar{a}\in X$ and open $U\subseteq X$ with $\bar{a}\in U$
there is an open $W\subseteq X$ with $\bar{a}\in W$ such that $cl\left(W\right)\subseteq U$.
\end{defn}

\begin{fact}
For any topology $\tau$ on $X$, $\left(X,\tau\right)$ is
regular  if and only if  for every basis $\mathcal{B}$
for~$\tau$, for every point $\bar{a}\in X$ and open basic neighborhood
$U\in\mathcal{B}$ of $\bar{a}$, there is an open basic neighborhood
$W\in\mathcal{B}$ of $\bar{a}$ such that $cl\left(W\right)\subseteq U$.
\end{fact}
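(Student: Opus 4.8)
The plan is to prove the two implications separately, using only the definition of regularity (Definition~\ref{def: regular}) and the defining property of a basis. Neither direction requires o-minimality or any structural feature of $\tau$: this is a purely general-topological equivalence, and the only point that demands attention is the quantification over \emph{every} basis.

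First I would establish the easier direction: assume the basic-neighborhood condition holds for every basis of $\tau$, and deduce regularity. Fix $\bar{a}\in X$ and an arbitrary $\tau$-open set $U$ with $\bar{a}\in U$. Choose any basis $\mathcal{B}$ for $\tau$ (for instance $\tau$ itself). Since $U$ is open and $\mathcal{B}$ is a basis, there is $U'\in\mathcal{B}$ with $\bar{a}\in U'\subseteq U$. Applying the hypothesis to this basis, this point, and the basic neighborhood $U'$, we obtain $W\in\mathcal{B}$ with $\bar{a}\in W$ and $cl(W)\subseteq U'\subseteq U$. As $W$ is open, this is precisely the witness required by Definition~\ref{def: regular}, so $(X,\tau)$ is regular.

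For the converse, assume $(X,\tau)$ is regular, let $\mathcal{B}$ be an arbitrary basis, and fix $\bar{a}\in X$ together with a basic open neighborhood $U\in\mathcal{B}$ of $\bar{a}$. Because $U$ is in particular $\tau$-open and contains $\bar{a}$, regularity supplies a $\tau$-open set $W'$ with $\bar{a}\in W'$ and $cl(W')\subseteq U$. This $W'$ need not itself be basic, so I would shrink it: using that $\mathcal{B}$ is a basis, pick $W\in\mathcal{B}$ with $\bar{a}\in W\subseteq W'$. Then $cl(W)\subseteq cl(W')\subseteq U$ by monotonicity of closure, and $W$ is a basic open neighborhood of $\bar{a}$, which finishes the direction.

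The only genuinely delicate point, and the one I would flag explicitly, is that the right-hand condition asserts the existence of a shrinking basic neighborhood for \emph{all} bases simultaneously. The forward direction handles this uniformly, since the argument is valid whatever basis $\mathcal{B}$ is presented, whereas the backward direction only needs the condition for a single conveniently chosen basis. In effect the Fact records that the basic form of regularity is independent of the basis one works with, and no ingredient beyond monotonicity of closure and the definition of a basis is required.
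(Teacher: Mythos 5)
Your proof is correct and is the standard argument; the paper states this as a Fact without proof, and the two implications you give (shrinking the regularity witness into a basic set via monotonicity of closure, and conversely passing from an arbitrary open set to a basic subneighborhood) are exactly what is intended.
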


It follows that for a definable topology\emph{ $\tau$ on $X$, $\left(X,\tau\right)$}
is regular if and only if  it is definably regular, namely, in Definition \ref{def: regular}
we may consider only definable $U,W$.\\

We continue with a new definition:
\begin{defn}
Let $T,S\subseteq\mathcal{P}\left(X\right)$ be two definable families
of sets. We write $T\preceq S$ if for each $U\in T$ there is $V\in S$
such that $V\subseteq U$. If both $T\preceq S$ and $S\preceq T$
take place, we say that \emph{the families $T$ and $S$ are equivalent},
and write $T\sim S$. By $T\precneqq S$ we mean that $T\preceq S$
and $T\nsim S$.

In particular, let $\tau$ and $\eta$ be definable topologies on
$X$ with definable bases $\mathcal{B}^{\tau}$ and $\mathcal{B}^{\eta}$,
respectively. The topology $\eta$ is finer than the topology $\tau$
if and only if for every $\bar{a}\in X$, $\mathcal{B}_{\bar{a}}^{\tau}\preceq\mathcal{B}_{\bar{a}}^{\eta}$.
If we have both $\mathcal{B}_{\bar{a}}^{\tau}\preceq\mathcal{B}_{\bar{a}}^{\eta}$
and $\mathcal{B}_{\bar{a}}^{\eta}\preceq\mathcal{B}_{\bar{a}}^{\tau}$,
we say that \emph{the bases $\mathcal{B}_{\bar{a}}^{\tau}$ and $\mathcal{B}_{\bar{a}}^{\eta}$
are equivalent}, and write $\mathcal{B}_{\bar{a}}^{\tau}\sim\mathcal{B}_{\bar{a}}^{\eta}$.
It follows that $\tau=\eta$ if and only if  for all $\bar{a}\in X$, $\mathcal{B}_{\bar{a}}^{\tau}\sim\mathcal{B}_{\bar{a}}^{\eta}$.
\end{defn}

\begin{lem}
\label{Let--formula} Let $\psi\left(\bar{y}\right)$, $|\bar{y}|=m$,
be an $\mathcal{L}$-formula. Let $\bar{a}\in X$, and assume
that for each neighborhood $U\in\tau$ of \textup{$\bar{a}$ }\textup{\emph{there
exists a}} neighborhood $U_{\bar{b}}\in\mathcal{B}_{\bar{a}}$ of
\textup{$\bar{a}$}\textup{\emph{ }}such that $\mathcal{M}\models\psi\left(\bar{b}\right)$\textup{
 }and\textup{\emph{ }}$U_{\bar{b}}\subseteq U$\textup{. }\textup{\emph{Then}}
$\left\{ U_{\bar{b}}:\mathcal{M}\models\psi\left(\bar{b}\right)\right\} \sim\mathcal{B}_{\bar{a}}$.
\end{lem}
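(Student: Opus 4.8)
The plan is to unwind the equivalence $\sim$ into its two defining $\preceq$-inequalities and verify each separately. Writing $S:=\left\{ U_{\bar{b}}:\mathcal{M}\models\psi(\bar{b})\right\}$ for the family in the conclusion, I must establish both $S\preceq\mathcal{B}_{\bar{a}}$ and $\mathcal{B}_{\bar{a}}\preceq S$.

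First I would pin down the correct reading of $S$. For the conclusion to hold, every $U_{\bar{b}}$ with $\mathcal{M}\models\psi(\bar{b})$ must itself be a $\tau$-neighborhood of $\bar{a}$, that is, satisfy $\mathcal{M}\models\varphi(\bar{a},\bar{b})$ and hence belong to $\mathcal{B}_{\bar{a}}$; this is the intended reading, in which $\psi$ carries $\bar{a}$ as a parameter and entails $\varphi(\bar{a},\bar{y})$, and under it $S\subseteq\mathcal{B}_{\bar{a}}$. I would flag that this is essential: were some $U_{\bar{b}}\in S$ to omit $\bar{a}$, it could contain no member of $\mathcal{B}_{\bar{a}}$, and the inequality $S\preceq\mathcal{B}_{\bar{a}}$ would break down.

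Granting $S\subseteq\mathcal{B}_{\bar{a}}$, the inequality $S\preceq\mathcal{B}_{\bar{a}}$ is immediate: each $V\in S$ already lies in $\mathcal{B}_{\bar{a}}$ and $V\subseteq V$, so $V$ itself is the required member of $\mathcal{B}_{\bar{a}}$ sitting inside $V$. For the reverse inequality $\mathcal{B}_{\bar{a}}\preceq S$ I would simply invoke the hypothesis of the lemma: given an arbitrary $U\in\mathcal{B}_{\bar{a}}$, which is in particular a $\tau$-neighborhood of $\bar{a}$, the assumption supplies some $U_{\bar{b}}\in\mathcal{B}_{\bar{a}}$ with $\mathcal{M}\models\psi(\bar{b})$ and $U_{\bar{b}}\subseteq U$. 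The condition $\mathcal{M}\models\psi(\bar{b})$ places $U_{\bar{b}}$ in $S$, while $U_{\bar{b}}\subseteq U$ is exactly what $\preceq$ asks for. The two inequalities together give $S\sim\mathcal{B}_{\bar{a}}$.

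I do not expect any genuine obstacle: the lemma is essentially a definitional repackaging, asserting that a definable subfamily $S\subseteq\mathcal{B}_{\bar{a}}$ which still refines $\mathcal{B}_{\bar{a}}$ (every basic neighborhood of $\bar{a}$ contains a member of $S$) is equivalent to the full neighborhood basis at $\bar{a}$. No o-minimality or any topology beyond the bare definitions is invoked; the only point deserving a moment's care is the parsing issue above, namely ensuring that the members of $S$ are themselves neighborhoods of $\bar{a}$.
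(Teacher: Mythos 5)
Your proof is correct and follows essentially the same route as the paper's: the containment $\left\{ U_{\bar{b}}:\mathcal{M}\models\psi\left(\bar{b}\right)\right\} \subseteq\mathcal{B}_{\bar{a}}$ gives one $\preceq$-inequality, and the hypothesis gives the other. Your remark about needing the $U_{\bar{b}}$ with $\mathcal{M}\models\psi(\bar{b})$ to actually be neighborhoods of $\bar{a}$ matches the paper's (implicit) convention that $U_{\bar{b}}$ ranges over members of $\mathcal{B}_{\bar{a}}$.
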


\begin{proof}
Clearly, $\left\{ U_{\bar{b}}:\mathcal{M}\models\psi\left(\bar{b}\right)\right\} \subseteq\left\{ U_{\bar{b}}:\bar{b}\in M^m\right\} =\mathcal{B}_{\bar{a}}$.
By the assumption of the lemma, for each $U\in\mathcal{B}_{\bar{a}}$
there is $U_{\bar{b}}\in\left\{ U_{\bar{b}}:\mathcal{M}\models\psi\left(\bar{b}\right)\right\} $
such that\emph{ }$U_{\bar{b}}\subseteq U$. Thus $\left\{ U_{\bar{b}}:\mathcal{M}\models\psi\left(\bar{b}\right)\right\} \sim\mathcal{B}_{\bar{a}}$.
\end{proof}
Informally, Lemma \ref{Let--formula} says that if a definable property
holds for arbitrary small neighborhoods of $\bar{a}$, then one can
pick a definable basis for the neighborhoods of $\bar{a}$ such that this property
holds for all of its sets.

As an immediate corollary we obtain:
\begin{lem}
\label{Given-a-property} Let $\bar{a}\in X$, and assume that for
every $U_{\bar{b}}\in\mathcal{B}_{\bar{a}}$, $\mathcal{M}\models\left(\psi_{1}\left(\bar{b}\right)\lor\psi_{2}\left(\bar{b}\right)\right)$.
For $i=1,2$, denote $\mathcal{B}_{\bar{a}}^{i}=\left\{ U_{\bar{b}}\in\mathcal{B}_{\bar{a}}:\psi_{i}\left(\bar{b}\right)\right\} $.
Then either $\mathcal{B}_{\bar{a}}^{1}\mathcal{\sim B}_{\bar{a}}$
or $\mathcal{B}_{\bar{a}}^{2}\mathcal{\sim B}_{\bar{a}}$.
\end{lem}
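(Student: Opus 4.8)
The plan is to argue by contradiction using the contrapositive of Lemma \ref{Let--formula}, together with the fact that a neighborhood basis is a filtered collection. First I would record the trivial half of the equivalence: since $\mathcal{B}_{\bar{a}}^{i}\subseteq\mathcal{B}_{\bar{a}}$ for $i=1,2$, the relation $\mathcal{B}_{\bar{a}}^{i}\preceq\mathcal{B}_{\bar{a}}$ holds automatically, as each $U\in\mathcal{B}_{\bar{a}}^{i}$ is witnessed by itself. Hence proving $\mathcal{B}_{\bar{a}}^{i}\sim\mathcal{B}_{\bar{a}}$ reduces to establishing the reverse inequality $\mathcal{B}_{\bar{a}}\preceq\mathcal{B}_{\bar{a}}^{i}$, i.e.\ that every basic neighborhood of $\bar{a}$ contains some $U_{\bar{b}}$ with $\mathcal{M}\models\psi_{i}(\bar{b})$. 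Since $\mathcal{B}_{\bar{a}}$ is a basis, every $\tau$-neighborhood of $\bar{a}$ contains a basic one, so this reverse inequality is precisely the hypothesis of Lemma \ref{Let--formula} applied to $\psi_{i}$; thus $\mathcal{B}_{\bar{a}}^{i}\sim\mathcal{B}_{\bar{a}}$ fails exactly when that hypothesis fails.

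Next I would take the contrapositive. Suppose toward a contradiction that neither $\mathcal{B}_{\bar{a}}^{1}\sim\mathcal{B}_{\bar{a}}$ nor $\mathcal{B}_{\bar{a}}^{2}\sim\mathcal{B}_{\bar{a}}$. Negating the hypothesis of Lemma \ref{Let--formula} for $\psi_{1}$ produces a $\tau$-neighborhood of $\bar{a}$ containing no $U_{\bar{b}}$ with $\mathcal{M}\models\psi_{1}(\bar{b})$, and shrinking it to a basic one gives $U_{\bar{b}_{1}}\in\mathcal{B}_{\bar{a}}$ with the same property. Likewise, for $\psi_{2}$ there is a basic neighborhood $U_{\bar{b}_{2}}\in\mathcal{B}_{\bar{a}}$ containing no $U_{\bar{b}}$ with $\mathcal{M}\models\psi_{2}(\bar{b})$.

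The key step is then to combine these two ``bad'' neighborhoods using that $\mathcal{B}_{\bar{a}}$, being a basis for the neighborhoods of $\bar{a}$, is a filtered collection: there is $U_{\bar{b}_{3}}\in\mathcal{B}_{\bar{a}}$ with $U_{\bar{b}_{3}}\subseteq U_{\bar{b}_{1}}\cap U_{\bar{b}_{2}}$. Since $U_{\bar{b}_{3}}\in\mathcal{B}_{\bar{a}}$, the standing hypothesis yields $\mathcal{M}\models\psi_{1}(\bar{b}_{3})\lor\psi_{2}(\bar{b}_{3})$. If $\psi_{1}(\bar{b}_{3})$ holds, then $U_{\bar{b}_{3}}$ satisfies $\psi_{1}$ and lies inside $U_{\bar{b}_{1}}$, contradicting the choice of $U_{\bar{b}_{1}}$; if $\psi_{2}(\bar{b}_{3})$ holds, then $U_{\bar{b}_{3}}\subseteq U_{\bar{b}_{2}}$ contradicts the choice of $U_{\bar{b}_{2}}$. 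Either way we reach a contradiction, so at least one of the two equivalences holds.

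Since the argument is a direct unwinding of the preceding lemma, I do not anticipate a genuine obstacle. The only points needing care are the reduction of $\sim$ to its nontrivial half, which relies on the inclusions $\mathcal{B}_{\bar{a}}^{i}\subseteq\mathcal{B}_{\bar{a}}$, and the legitimacy of the filtering step — namely that the intersection of the two bad basic neighborhoods again contains a basic neighborhood, which is exactly the filtered-collection property of a neighborhood basis noted after the definition of filtered collection.
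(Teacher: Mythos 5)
Your proof is correct and is exactly the intended argument: the paper gives no written proof, stating the lemma as an immediate corollary of Lemma \ref{Let--formula}, and your unwinding --- reduce $\sim$ to $\mathcal{B}_{\bar{a}}\preceq\mathcal{B}_{\bar{a}}^{i}$ via the inclusions $\mathcal{B}_{\bar{a}}^{i}\subseteq\mathcal{B}_{\bar{a}}$, take the two bad neighborhoods, refine them by the filtered-collection property of $\mathcal{B}_{\bar{a}}$, and contradict the disjunction hypothesis --- is the natural and complete justification. No gaps.
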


We end this part with some definitions that we use later
on:
\begin{defn}
$\left(X,\tau\right)$ is \emph{definably connected} if there are
no definable non-empty open sets $U,W$ such that $U\cap W=\emptyset$
and $U\cup W=X$. Equivalently, $X$ does not contain any definable
proper non-empty clopen subset. A definable $Y\subseteq X$ is \emph{definably
connected} (with respect to $\tau$) if the space $\left(Y,\tau|_{Y}\right)$
is definably connected\emph{.}
\end{defn}

\begin{defn}
Let $\tau$ be a definable topology on $X$. A \textbf{definable},
maximal definably connected subset of $X$ is called a \emph{definably
connected component} of $X$.

If $X$ can be decomposed into finitely many definably connected components,
then we say that $\left(X,\tau\right)$ has finitely many definably
connected components.

The space $\left(X,\tau\right)$ is called \emph{totally definably
disconnected} if its only definably connected subsets are singletons
and $\emptyset$. A definable \emph{subset} $A\subseteq X$ is called
\emph{totally definably disconnected} if it is so with respect to
the subspace topology.
\end{defn}

Note that each definably connected component is a clopen set (since
the closure of a definably connected set is itself definably connected).
\begin{rem*}
We do not know in general the answer to the following question: Given
a definable topology $\tau$ on $X$ and $\bar{a}\in X$, is the union
of all (definable) definably connected subsets of $X$ which contain
$\bar{a}$, a definable set itself?

\end{rem*}

\section{Towards the main results}

From now on we assume that \\
\textbf{$\boldsymbol{\mathcal{M}=\left(M;<,+,\ldots\right)}$
is an o-minimal expansion of an ordered group, $\boldsymbol{X\subseteq M^{n}}$
is a definable one-dimensional bounded set and $\boldsymbol{\tau}$
is an definable Hausdorff topology on $\boldsymbol{X}$. For simplicity,
assume that $\boldsymbol{X}$ and $\boldsymbol{\tau}$ are definable
over $\boldsymbol{\emptyset}$. }

Whenever we mention a topology without pointing out which one, we
are referring to the topology $\tau$. Thus, whenever we write $\mathcal B_{\bar{a}}$ we refer to the basis
of neighborhoods with respect to  $\tau$. Having said that, we refer to various types of $<$-intervals by their usual names. E.g, the term ``open-interval'' always refers
to an interval of the form $(a,b)$, for $a<b$ in $M$.

By o-minimality, $X$ is a finite union of $0$-cells
and bounded  $1$-cells in $\mathcal{M}$. Hence, by applying an appropriate
$\mathcal{L}$-definable bijection 
we can assume that $X$ is a bounded subset of $M$:\textbf{
\[
\boldsymbol{X=\left(s_{1},t_{1}\right)\sqcup\ldots\sqcup\left(s_{l},t_{l}\right)\sqcup F},
\]
 where $\boldsymbol{F}$ is a finite set of points, each $\boldsymbol{s_{i},t_{i}}$
is in $\boldsymbol{M}$, and $\boldsymbol{s_{i}\neq t_{j}}$ for all
$\boldsymbol{1\leq i\neq j\leq l}$. }

Let $\mathcal{B}^{af}=\left\{ \left(b_{1},b_{2}\right):b_{1},b_{2}\in M\right\} $
be the standard definable basis for $\left(M,\tau^{af}\right)$. A
basis for the $\tau^{af}$-neighborhoods in $X$
of $a\in X$ is
\[
\mathcal{B}_{a}^{af}\left(X\right):=\left\{ \left(b_{1},b_{2}\right)\cap X:a\in\left(b_{1},b_{2}\right)\right\} .
\]
 To simplify notation, we write $\mathcal{B}_{a}^{af}$ instead of
$\mathcal{B}_{a}^{af}\left(X\right)$.\\

We need a few more definitions. Below the term ``locally'' refers to the affine topology.
\begin{defn}
 We say that the point $a\in X$ is \emph{locally isolated }if there
are $U\in\mathcal{B}_{a}$ and an open-interval $I\ni a$ such that
$U\cap I=\left\{ a\right\} $.
\end{defn}

\begin{defn}
We say that the point $a\in X$ is \emph{locally right-closed }if
for every small enough $U\in\mathcal{B}_{a}$ there exists an open-interval
$I_{U}\ni a$ and a point $a'\in X$, $a'<a$, such that $U\cap I_{U}=\left(a',a\right]$.
A \emph{locally left-closed }point is defined similarly.
\end{defn}

\begin{defn}
We say that the point $a\in X$ is \emph{locally Euclidean }if for
every small enough $U\in\mathcal{B}_{a}$ there exists an open-interval
$I_{U}\ni a$ and two points $a',a''\in X$, $a'<a<a''$, such that
$U\cap I_{U}=\left(a',a''\right)$.
\end{defn}

Here is an easy observation:
\begin{lem}
\emph{\label{lem: locally iso or open or half} }For every\emph{ }\textup{\emph{$a\in X$, exactly one of the following
holds:}}

\textup{\emph{(1) $a$ is}} locally isolated.

\textup{\emph{(2) $a$ is}} locally right-closed.

\textup{\emph{(3) $a$ is}}\emph{ }locally left-closed.

\textup{\emph{(4) $a$ is}} locally Euclidean.\textup{ }
\end{lem}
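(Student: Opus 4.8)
The plan is to analyze, for each basic neighborhood $U\in\mathcal{B}_a$, only the germ of $U$ immediately to the left and to the right of $a$, and then to show that these two one-sided behaviors stabilize as $U$ shrinks along $\mathcal{B}_a$. First I would fix, by o-minimality of $X$, a small open-interval $J=(c,d)\ni a$ on which $X$ has constant one-sided structure, so that $X\cap J$ is one of $\{a\}$, $(c,a]$, $[a,d)$ or $(c,d)$. Given $U=U_{\bar b}\in\mathcal{B}_a$, o-minimality applied to $U\subseteq M$ shows that just to the left of $a$ the set $U$ is either a full interval or empty; hence exactly one of the two \emph{definable} conditions on the parameter $\bar b$ holds, namely that $U_{\bar b}$ contains a left-interval at $a$ (call it $L(\bar b)$: $\exists a'<a\,(a',a)\subseteq U_{\bar b}$) or that it is empty on a left-interval ($\exists a'<a\,(a',a)\cap U_{\bar b}=\emptyset$), and symmetrically for a pair $R(\bar b)$ on the right. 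The pair of truth values of $(L,R)$ then determines the affine shape of $U\cap I_U$ for a small $I_U\ni a$ inside $J$: it is $\{a\}$, $(a',a]$, $[a,a'')$ or $(a',a'')$ according to whether $U$ is empty/empty, full-left/empty-right, empty-left/full-right, or full/full near $a$. These four shapes are exactly the witnesses appearing in clauses (1)--(4).

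The core of the argument is to show that each of the bits $L$ and $R$ is eventually constant along $\mathcal{B}_a$. By Lemma \ref{Given-a-property} applied to the disjunction $L\vee\neg L$, at least one of the subfamilies $\{U:L(U)\}$ and $\{U:\neg L(U)\}$ is $\sim\mathcal{B}_a$. I expect the main obstacle to be ruling out that \emph{both} are cofinal, and this is exactly where the filtered (downward directed) nature of $\mathcal{B}_a$ enters: if some $U$ contains a left-interval $(a',a)$ while some $V$ is empty on $(a'',a)$, then any common refinement $W\subseteq U\cap V$ in $\mathcal{B}_a$ is again empty just-left of $a$, and so is every basic neighborhood below $W$; hence $\{U:L(U)\}$ cannot be cofinal once a $\neg L$-neighborhood exists. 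Combining this with Lemma \ref{Given-a-property}, exactly one of $\{L\},\{\neg L\}$ is $\sim\mathcal{B}_a$; equivalently, there is $U_0^L\in\mathcal{B}_a$ such that $L$ is constant on all basic $U\subseteq U_0^L$. The identical argument produces $U_0^R$.

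To finish, I would use once more that $\mathcal{B}_a$ is filtered to choose $U_0\subseteq U_0^L\cap U_0^R$; then both $L$ and $R$ are constant on the basic neighborhoods contained in $U_0$, so these all share a single shape, which is precisely the witness required by the corresponding clause (with each $I_U$ chosen inside $J$). This yields that at least one of (1)--(4) holds, and, via Lemma \ref{Let--formula}, that the chosen shape is realized by a full definable neighborhood basis at $a$. Exclusivity is then immediate from incompatibility of shapes on a common refinement: a neighborhood of shape $(a',a]$ is empty to the right of $a$ whereas one of shape $[a,a'')$ or $(a',a'')$ is not, and the singleton shape $\{a\}$ excludes all the others; since any two witnessing neighborhoods have a common refinement in $\mathcal{B}_a$, no two of the clauses can hold at once, giving exactly one.
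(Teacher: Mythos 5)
Your proof is correct and follows essentially the same route as the paper's: o-minimality reduces the germ of each basic neighborhood at $a$ to one of four definable shapes, and the filtered, definable structure of $\mathcal{B}_a$ (Lemma \ref{Given-a-property}) forces exactly one shape to prevail. Your extra care in showing that each one-sided bit is \emph{eventually constant} (via downward closure of $\neg L$ and $\neg R$ under refinement) rather than merely cofinal is a worthwhile elaboration of the step the paper dispatches by simply citing Lemma \ref{Given-a-property}.
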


\begin{proof}
Fix $a\in X$. By o-minimality, every definable subset $U$ containing
$a$ is a finite union of points and intervals. This means that there
exists an open-interval $I\ni a$ such that $I\cap U$ is either $\left\{ a\right\} $
or a half-closed-interval, or half-open-interval,  or an open-interval. Thus, $U$ is from
one of the above four types and it is easy to see that each of those
is a definable property of $U$. The result follows by Lemma \ref{Given-a-property}.
\end{proof}
Notice that if $\mathcal{B}_{a}\mathcal{\npreceq B}_{a}^{af}$ then
$a$ is not locally Euclidean. Hence we have:
\begin{cor}
\label{cor: a locally iso=00005Cleft=00005Cright} For every $a\in X$,
if $\mathcal{B}_{a}\mathcal{\npreceq B}_{a}^{af}$ then $a$ is either
locally isolated or locally right-closed or locally left-closed\emph{.}
\end{cor}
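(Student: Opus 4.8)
The plan is to reduce the statement to the single implication that a locally Euclidean point $a$ satisfies $\mathcal{B}_a \preceq \mathcal{B}_a^{af}$, and then to invoke the four-way case split of Lemma \ref{lem: locally iso or open or half}. That lemma guarantees that every $a \in X$ falls under exactly one of: locally isolated, locally right-closed, locally left-closed, or locally Euclidean. Hence, once we know that $\mathcal{B}_a \npreceq \mathcal{B}_a^{af}$ forces $a$ to fail being locally Euclidean, the three remaining options are precisely the conclusion of the corollary. So the entire content lies in proving the contrapositive: \emph{if $a$ is locally Euclidean, then $\mathcal{B}_a \preceq \mathcal{B}_a^{af}$.}

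To establish this implication I would argue as follows. Fix a locally Euclidean point $a$ and an arbitrary $\tau$-neighborhood $U \in \mathcal{B}_a$. Since $\mathcal{B}_a$ is a filtered collection of neighborhoods of $a$, I may choose $U' \in \mathcal{B}_a$ with $U' \subseteq U$ and $U'$ small enough to fall under the defining condition of local Euclideanness; this bridges the gap between the ``for every small enough $U$'' in the definition and the ``for every $U$'' needed for $\preceq$. By definition there are then an open-interval $I_{U'} \ni a$ and points $a' < a < a''$ in $X$ with $U' \cap I_{U'} = (a', a'')$. As $U' \subseteq X$, the set $(a', a'')$ is a subset of $X$, so $(a', a'') = (a', a'') \cap X$ is a basic affine neighborhood of $a$, that is, $(a', a'') \in \mathcal{B}_a^{af}$. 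Moreover $(a', a'') = U' \cap I_{U'} \subseteq U' \subseteq U$. Thus for every $U \in \mathcal{B}_a$ there is an element of $\mathcal{B}_a^{af}$ contained in $U$, which is exactly the assertion $\mathcal{B}_a \preceq \mathcal{B}_a^{af}$.

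Combining the two steps completes the argument: assuming $\mathcal{B}_a \npreceq \mathcal{B}_a^{af}$, the implication just proved rules out $a$ being locally Euclidean, so by Lemma \ref{lem: locally iso or open or half} the point $a$ must be locally isolated, locally right-closed, or locally left-closed. I expect the only real subtlety — hardly an obstacle — to be the bookkeeping with the quantifier ``for every small enough $U$,'' which is handled by the filteredness of $\mathcal{B}_a$, together with the check that the interval produced by local Euclideanness genuinely lies in $X$ and is therefore an honest affine-basic neighborhood rather than merely its trace $(a',a'') \cap X$ on $X$.
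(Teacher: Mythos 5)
Your proposal is correct and follows exactly the route the paper takes: the paper's entire argument for this corollary is the one-line observation that $\mathcal{B}_{a}\npreceq\mathcal{B}_{a}^{af}$ rules out $a$ being locally Euclidean, after which Lemma \ref{lem: locally iso or open or half} leaves only the three stated possibilities. Your write-up simply fills in the details of that observation (the filteredness bookkeeping and the check that $(a',a'')\cap X$ is a genuine basic affine neighborhood contained in $U$), and those details are right.
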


\subsection{\label{subsec: shadows-points}The shadows of a point}

 For a set $U\subseteq X$, by $cl^{af}\left(U\right)$
we mean the $\tau^{af}$-closure of $U$ \textbf{in }$\boldsymbol{M}$,
even if $cl^{af}\left(U\right)\nsubseteq X$. The definition below, which plays a key role in our analysis of the topology, 
makes sense for every definable topology on $X$ (independently of $\dim X$). 
\begin{defn}
We define the\emph{ set of shadows} of a point $a\in X$ to be
\[
\boldsymbol{S}\left(a\right):=\bigcap_{U\in\mathcal{B}_{a}}cl^{af}\left(U\right).
\]
 We call a point in $\boldsymbol{S}\left(a\right)$ a \emph{shadow}
of $a$.
\end{defn}

It is not hard to see that $\boldsymbol{S}\left(a\right)$ does not
depend on the specific choice of a basis. Intuitively, as the example below shows, $\boldsymbol{S}\left(a\right)$ is the set of all points in $M^n$
which are ``glued'' to $a$ by the topology $\tau$.
\begin{example}
\label{exa: =00005Cinfty} Let $\tau$ be the topology on $X=\left(s,t\right)\subseteq\mathbb{R}$
that is homeomorphic to the figure $\infty$, as follows: Fix $a\in\left(s,t\right)$,
and let
\[
\mathcal{B}^{\tau}:=\left\{ \left(r_{1},r_{2}\right)\subseteq\left(s,t\right):r_{1}<r_{2}\leq a\text{ or }a\leq r_{1}<r_{2}\right\} \cup
\]
\[
\left\{ \left(s,r_{1}\right)\cup\left(r_{2},r_{3}\right)\cup\left(r_{4},t\right):r_{1}\leq r_{2}<a<r_{3}\leq r_{4}\right\} .
\]

The point $a$ corresponds to the middle point of $\infty$, where
$s$ and $t$ are attached to $a$. The figure $\infty$ shape is
formed by closing the two sides of $\left(s,t\right)$ to the point
$a$. Notice that $a$ is the only point such that $\mathcal{B}_{a}\nsim\mathcal{B}_{a}^{af}$,
and $\boldsymbol{S}\left(a\right)=\left\{ a,s,t\right\} $.
\end{example}

\begin{example}
Let $M^{2}\supseteq X=\left(0,1\right)\times\left\{ 1,2\right\} $.
Let $\prec$ be the lexicographic order on $X$, and $\tau^{\prec}$
be the associated topology on $X$. One may check that for every $c\in\left(0,1\right)$,
\[
\boldsymbol{S}\left(\left\langle c,1\right\rangle \right)=\boldsymbol{S}\left(\left\langle c,2\right\rangle \right)=\left\{ \left\langle c,1\right\rangle ,\left\langle c,2\right\rangle \right\} .
\]
\end{example}

The following is immediate:
\begin{fact}
For every $a\in X$,

(1) $a\in\boldsymbol{S}\left(a\right)$.

(2) If $\tau$ is the affine topology on $X$ then $\boldsymbol{S}\left(a\right)=\left\{ a\right\} $.
\end{fact}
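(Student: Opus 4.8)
The plan is to treat the two parts separately; both reduce to elementary separation properties of the affine topology, and since the statement is flagged as immediate I expect each to be short.

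For part (1) I would simply note that every $U\in\mathcal{B}_{a}$ is by definition a $\tau$-neighborhood of $a$, so $a\in U\subseteq cl^{af}(U)$. As this holds for each $U$ in the basis, $a$ lies in the intersection $\bigcap_{U\in\mathcal{B}_{a}}cl^{af}(U)=\boldsymbol{S}(a)$, giving $a\in\boldsymbol{S}(a)$.

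For part (2) the key point is that the affine topology separates points, so I would show that for every $b\neq a$ there is a \emph{single} basic neighborhood $U$ of $a$ with $b\notin cl^{af}(U)$; this already forces $b\notin\boldsymbol{S}(a)$, and combined with (1) yields $\boldsymbol{S}(a)=\{a\}$. Since $\boldsymbol{S}(a)$ does not depend on the chosen basis (as already noted in the text), I may compute it using the affine basis $\mathcal{B}_{a}^{af}=\{(b_{1},b_{2})\cap X:a\in(b_{1},b_{2})\}$, working under the standing reduction $X\subseteq M$. Given $b\neq a$, say $a<b$ (the case $b<a$ is symmetric), I would use that the order on $M$ is dense --- which holds because an o-minimal expansion of an ordered group has no infinite discrete definable set --- to choose $d$ with $a<d<b$, together with some $e<a$ (which exists since $M$ is an unbounded ordered group). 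Setting $U=(e,d)\cap X\in\mathcal{B}_{a}^{af}$, density also gives $cl^{af}((e,d))=[e,d]$, hence $cl^{af}(U)\subseteq[e,d]$, and $b>d$ shows $b\notin cl^{af}(U)$.

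I do not expect a serious obstacle: the statement is essentially the Hausdorff (indeed regular) separation of points in the order-product topology, and the only substantive ingredient is density of the order, which is automatic in the present setting. The one point deserving a word of care is that the separation must be witnessed by a single basic neighborhood of $a$ rather than merely by the full filter of neighborhoods; but since the basic neighborhoods are intervals intersected with $X$ and the interval closure $[e,d]$ already excludes $b$, this is immediate.
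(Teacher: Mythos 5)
Your proof is correct, and since the paper states this Fact as immediate without supplying any argument, your elementary verification (membership $a\in U\subseteq cl^{af}(U)$ for part (1), and separating $b\neq a$ from $a$ by a single interval $(e,d)$ with $a<d<b$, using density of the order in an o-minimal ordered group, for part (2)) is exactly the intended reasoning. The only point worth noting is that density could be cited directly as a standard property of o-minimal ordered groups rather than derived from the absence of infinite discrete definable sets, but this is cosmetic.
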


We still assume below that $X\subseteq M$ is a definable one dimensional
bounded set.
\begin{lem}
\label{lem:GeneralProperty2 of Y_a} For every $a\in X$, the following
are equivalent:

(1) $\boldsymbol{S}\left(a\right)=\left\{ a\right\} $.

(2) $\mathcal{B}_{a}^{af}\preceq\mathcal{B}_{a}$.

(3) $\mathcal{B}_{a}\sim\mathcal{B}_{a}^{iso}$ or $\mathcal{B}_{a}\sim\mathcal{B}_{a}^{^{\left(\,\,\right]}}$
or $\mathcal{B}_{a}\sim\mathcal{B}_{a}^{^{\left[\,\,\right)}}$ or
$\mathcal{B}_{a}\sim\mathcal{B}_{a}^{af}$ (where $\mathcal{B}_{a}^{af}=\mathcal{B}_{a}^{af}\left(X\right)$).
\end{lem}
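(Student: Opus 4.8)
The plan is to prove the two equivalences $(1)\Leftrightarrow(2)$ and $(2)\Leftrightarrow(3)$, the main tools being definable compactness (Fact \ref{lem: J}) and the classification of local types (Lemma \ref{lem: locally iso or open or half}). Throughout I work with $X\subseteq M$ and recall that $a\in\boldsymbol{S}(a)$ always.

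For $(2)\Rightarrow(1)$ I argue directly: given $p\neq a$ in $M$, pick an affine open interval $I\ni a$ with $p\notin cl^{af}(I)$ (possible since the order topology separates $a$ from $p$); by $(2)$ there is $U\in\mathcal{B}_{a}$ with $U\subseteq I\cap X$, whence $cl^{af}(U)\subseteq cl^{af}(I)$ misses $p$, so $p\notin\boldsymbol{S}(a)$. For the converse $(1)\Rightarrow(2)$ I argue by contradiction: if $\mathcal{B}_{a}^{af}\npreceq\mathcal{B}_{a}$ there is an affine neighbourhood $W$ of $a$ with $U\cap(X\setminus W)\neq\emptyset$ for every $U\in\mathcal{B}_{a}$. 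Then $\{cl^{af}(U\cap(X\setminus W)):U\in\mathcal{B}_{a}\}$ is a definable filtered collection of $\tau^{af}$-closed, bounded, non-empty sets, so by Fact \ref{lem: J} it has a common point $p$. This $p$ lies in $\boldsymbol{S}(a)$, but since $W$ contains an affine interval about $a$ we have $a\notin cl^{af}(X\setminus W)$, forcing $p\neq a$ and contradicting $(1)$.

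For $(3)\Rightarrow(2)$ it suffices to observe that each model basis satisfies $\mathcal{B}_{a}^{af}\preceq\mathcal{B}_{a}^{\bullet}$ — e.g. for $\mathcal{B}_{a}^{^{\left(\,\,\right]}}$, any affine neighbourhood $(b_{1},b_{2})\cap X$ of $a$ contains $(c,a]\cap X$ for $c\in(b_{1},a)$, and the isolated, left-closed and affine cases are analogous — so that $\mathcal{B}_{a}\sim\mathcal{B}_{a}^{\bullet}$ yields $(2)$ by transitivity of $\preceq$ and its compatibility with $\sim$.

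The heart of the argument, and its main obstacle, is $(2)\Rightarrow(3)$. By Lemma \ref{lem: locally iso or open or half} the point $a$ has exactly one of the four local types, and I match each to the corresponding model basis. If $a$ is locally Euclidean, I use Lemma \ref{Let--formula} to assume every $U$ satisfies $U\cap I_{U}=(a',a'')$; then the affine neighbourhood $(a',a'')\subseteq U$ gives $\mathcal{B}_{a}\preceq\mathcal{B}_{a}^{af}$, and $(2)$ gives the reverse, so $\mathcal{B}_{a}\sim\mathcal{B}_{a}^{af}$. If $a$ is locally isolated, applying $(2)$ to the witnessing affine interval $I$ and using the filtered property of $\mathcal{B}_{a}$ produces a $\tau$-neighbourhood inside $U\cap I=\{a\}$, so $\{a\}\in\mathcal{B}_{a}$ and $\mathcal{B}_{a}\sim\mathcal{B}_{a}^{iso}$. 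The delicate case is locally right-closed (left-closed being symmetric): assuming every $U$ has the shape $U\cap I_{U}=(a'_{U},a]$, the inclusion $\mathcal{B}_{a}\preceq\mathcal{B}_{a}^{^{\left(\,\,\right]}}$ is immediate, but I must still establish $\mathcal{B}_{a}^{^{\left(\,\,\right]}}\preceq\mathcal{B}_{a}$, i.e. that for each $c<a$ some $\tau$-neighbourhood lies inside $(c,a]$. Here I would suppose otherwise and apply Lemma \ref{Given-a-property} to split $\mathcal{B}_{a}$ according to whether $U$ meets $X\cap(-\infty,c]$ or $X\cap(a,\infty)$. In the first case Fact \ref{lem: J} applied to $\{cl^{af}(U\cap(-\infty,c])\}$ yields a shadow $p\leq c<a$, contradicting $(1)$; in the second case the same device yields a shadow $p\geq a$, which by $(1)$ must be $a$, yet local right-closedness keeps $U\cap(a,\infty)$ bounded away from $a$, so $a\notin cl^{af}(U\cap(a,\infty))$ — again a contradiction. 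The subtle interplay is exactly this: definable compactness rules out accumulation of the neighbourhoods away from $a$, while the local type rules out accumulation immediately to one side of $a$, and both are needed to pin $\mathcal{B}_{a}$ down to a single model basis.
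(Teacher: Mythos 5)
Your proof is correct, and for the equivalence $(1)\Leftrightarrow(2)$ it is essentially the paper's argument: both directions reduce to applying Fact \ref{lem: J} to the filtered family $\left\{ cl^{af}\left(U\setminus I\right):U\in\mathcal{B}_{a}\right\} $ to produce a second shadow whenever $\mathcal{B}_{a}^{af}\npreceq\mathcal{B}_{a}$. The genuine divergence is in the half-closed case of $(2)\Rightarrow(3)$. The paper argues directly and locally: given $\left(a-\epsilon,a\right]$, it invokes hypothesis (2) twice --- once to find $U\in\mathcal{B}_{a}$ inside $\left(a-\epsilon,a+\epsilon\right)$, and once more, after shrinking to the interval $\left(a-\delta,a+\delta\right)$ witnessing $I\cap U=\left(a',a\right]$, to find $V$ inside that smaller interval --- so that a common refinement $W\subseteq U\cap V$ lands in $\left(a',a\right]$. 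You instead argue by contradiction, splitting $\mathcal{B}_{a}$ with Lemma \ref{Given-a-property} according to which side of $\left(c,a\right]$ the neighbourhoods escape to and running definable compactness a second time to manufacture a forbidden shadow. That works, but note that it forces you to quote (1) in the middle of proving $(2)\Rightarrow(3)$; this is legitimate only because you establish $(1)\Leftrightarrow(2)$ beforehand, whereas in the paper's cyclic scheme $(1)\Rightarrow(2)\Rightarrow(3)\Rightarrow(1)$ it would be circular. The paper's version is shorter and needs no further compactness; yours previews the argument of Lemma \ref{lem:GeneralProperty1 of Y_a}, and either is acceptable. Your replacement of the paper's ``direct verification'' of $(3)\Rightarrow(1)$ by the composite $(3)\Rightarrow(2)\Rightarrow(1)$ is an equally valid bookkeeping choice.
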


\begin{proof}
$(1)\Rightarrow(2)$: Assume $\mathcal{B}_{a}^{af}\npreceq\mathcal{B}_{a}$.
Then there exists an interval $I\in\mathcal{B}_{a}^{af}$ such that for every $U\in\mathcal{B}_{a}$,
$U\nsubseteq I$. Since $X$ is bounded, every $cl^{af}\left(U\right)$
is a nonempty $\tau^{af}$-closed and bounded set, and so is every
$cl^{af}\left(U\setminus I\right)$. Thus, $\left\{ cl^{af}\left(U\setminus I\right):U\in\mathcal{B}_{a}\right\} $
is a definable filtered collection of $\tau^{af}$-closed and bounded
non-empty sets, so by Lemma \ref{lem: J}, its intersection is non-empty.

Since $a\in I$, it does not belong to this intersection. Therefore, $\bigcap_{U\in\mathcal{B}_{a}}cl^{af}\left(U\setminus I\right)$
must contain an element different than $a$. Finally, note that
\[
\boldsymbol{S}\left(a\right)=\bigcap_{U\in\mathcal{B}_{a}}cl^{af}\left(U\right)\supseteq\bigcap_{U\in\mathcal{B}_{a}}cl^{af}\left(U\setminus I\right),
\]
 hence $\boldsymbol{S}\left(a\right)$ contains an element other than
$a$. That is, $\left\{ a\right\} \subsetneq\boldsymbol{S}\left(a\right)$.

$(2)\Rightarrow(3)$: We assume that $a$ satisfies
one of (1)-(4) from Lemma \ref{lem: locally iso or open or half}.
Assume for example that it is locally right-closed, namely  for every sufficiently small 
$U\in\mathcal{B}_{a}$ there exists
an open-interval $I\ni a$ such that $I\cap U=\left(a',a\right]$.
Note that this assumption implies that $\mathcal{B}_{a}\preceq\mathcal{B}_{a}^{^{\left(\,\,\right]}}$.
We show that $\mathcal{B}_{a}\sim\mathcal{B}_{a}^{^{\left(\,\,\right]}}$
(the other cases are treated similarly).

We need to prove $\mathcal{B}_{a}^{^{\left(\,\,\right]}}\preceq\mathcal{B}_{a}$.
Fix $\left(a-\epsilon,a\right]\in\mathcal{B}_{a}^{^{\left(\,\,\right]}}$,
and we show that for some $W\in\mathcal{B}_{a}$, $W\subseteq\left(a-\epsilon,a\right]$.
Consider the interval $\left(a-\epsilon,a+\epsilon\right)$. By the
assumption $\mathcal{B}_{a}^{af}\preceq\mathcal{B}_{a}$, there exists
$U\in\mathcal{B}_{a}$ such that $U\subseteq\left(a-\epsilon,a+\epsilon\right)$.
By our assumption on $a$, there must be an open-interval $I\ni a$
such that $I\cap U=\left(a',a\right]$. We can take $I$ small enough,
and assume that $I=\left(a-\delta,a+\delta\right)\subseteq\left(a-\epsilon,a+\epsilon\right)$
with $a-\delta\leq a'<a$. Once again by the assumption $\mathcal{B}_{a}^{af}\preceq\mathcal{B}_{a}$,
there exists $V\in\mathcal{B}_{a}$ such that $V\subseteq\left(a-\delta,a+\delta\right)$.

Finally, there is $W\in\mathcal{B}_{a}$ such that $W\subseteq V\cap U$.
Thus we have
\[
W\subseteq V\cap U\subseteq\left(a-\delta,a+\delta\right)\cap U=\left(a',a\right]\subseteq\left(a-\epsilon,a\right].
\]
 Therefore $\mathcal{B}_{a}^{^{\left(\,\,\right]}}\preceq\mathcal{B}_{a}$,
and thus by our assumptions we have $\mathcal{B}_{a}\sim\mathcal{B}_{a}^{^{\left(\,\,\right]}}$.

$(3)\Rightarrow(1)$: Direct verification.
\end{proof}
\begin{lem}
\label{lem: B_a < B_a^M --> Y_a =00005Cneq =00007Ba=00007D} For every
$a\in X$, if $\mathcal{B}_{a}\precneqq\mathcal{B}_{a}^{af}$ then
$\boldsymbol{S}\left(a\right)\neq\left\{ a\right\} $.
\end{lem}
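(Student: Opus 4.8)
The plan is to read this off directly from Lemma \ref{lem:GeneralProperty2 of Y_a}, so that the entire argument reduces to carefully unwinding the asymmetric relation $\precneqq$. First I would recall that, by definition, the hypothesis $\mathcal{B}_{a}\precneqq\mathcal{B}_{a}^{af}$ packages two facts at once: that $\mathcal{B}_{a}\preceq\mathcal{B}_{a}^{af}$, and that $\mathcal{B}_{a}\nsim\mathcal{B}_{a}^{af}$.

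Next I would use that the relation $\sim$ is, by definition, the conjunction of $\mathcal{B}_{a}\preceq\mathcal{B}_{a}^{af}$ and $\mathcal{B}_{a}^{af}\preceq\mathcal{B}_{a}$. Since the first of these two conjuncts already holds by the hypothesis, the failure of $\sim$ must be caused by the failure of the second, i.e.\ $\mathcal{B}_{a}^{af}\npreceq\mathcal{B}_{a}$. This is precisely the step where the $\preceq$ half of the hypothesis $\precneqq$ is genuinely needed: without knowing that $\mathcal{B}_{a}\preceq\mathcal{B}_{a}^{af}$ holds, the mere negation of $\sim$ would not let us single out which of the two directions fails.

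Finally I would invoke the equivalence of conditions (1) and (2) in Lemma \ref{lem:GeneralProperty2 of Y_a}. Condition (2) there is exactly $\mathcal{B}_{a}^{af}\preceq\mathcal{B}_{a}$, which we have just negated, so by the equivalence condition (1), namely $\boldsymbol{S}\left(a\right)=\left\{ a\right\} $, must fail as well. Therefore $\boldsymbol{S}\left(a\right)\neq\left\{ a\right\} $, as claimed.

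I do not expect any real obstacle here: the statement is an immediate logical corollary of the preceding lemma, and the only point requiring care is to handle the asymmetry of $\preceq$ correctly when negating $\sim$, so that one extracts $\mathcal{B}_{a}^{af}\npreceq\mathcal{B}_{a}$ rather than the (useless) $\mathcal{B}_{a}\npreceq\mathcal{B}_{a}^{af}$.
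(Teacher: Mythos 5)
Your proposal is correct and follows exactly the paper's own argument: unpack $\precneqq$ to extract $\mathcal{B}_{a}^{af}\npreceq\mathcal{B}_{a}$, then apply the equivalence of (1) and (2) in Lemma \ref{lem:GeneralProperty2 of Y_a}. The only difference is that you spell out the negation of $\sim$ more explicitly than the paper does.
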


\begin{proof}
If $\mathcal{B}_{a}\precneqq\mathcal{B}_{a}^{af}$, then it follows
that $\mathcal{B}_{a}^{af}\npreceq\mathcal{B}_{a}$, and thus by Lemma
\ref{lem:GeneralProperty2 of Y_a} we have $\boldsymbol{S}\left(a\right)\neq\left\{ a\right\} $.
\end{proof}
\begin{lem}
\label{lem: a locally isolated implies Y_a > =00007Ba=00007D} For
every $a\in X$, if $a$ is locally isolated and not isolated then
$\boldsymbol{S}\left(a\right)\supsetneqq\left\{ a\right\} $.
\end{lem}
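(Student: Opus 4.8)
The plan is to manufacture an explicit shadow of $a$ distinct from $a$ by applying the definable compactness of the affine topology (Fact \ref{lem: J}) to a carefully chosen filtered collection of $\tau^{af}$-closed bounded sets. The whole difficulty is that definable compactness hands us a point in an intersection, but a priori that point could be $a$ itself; the role of local isolation will be precisely to keep the extracted point away from $a$.

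First I would use local isolation to fix $U_{0}\in\mathcal{B}_{a}$ and an open-interval $I_{0}\ni a$ with $U_{0}\cap I_{0}=\left\{ a\right\}$. The point of this choice is that for every $U\in\mathcal{B}_{a}$ with $U\subseteq U_{0}$ we then also have $U\cap I_{0}=\left\{ a\right\}$, so $U\setminus\left\{ a\right\}$ is disjoint from the open interval $I_{0}$. Consequently $cl^{af}\left(U\setminus\left\{ a\right\}\right)\subseteq M\setminus I_{0}$, a $\tau^{af}$-closed set that does \emph{not} contain $a$. This is exactly the mechanism that will force the shadow we produce to differ from $a$.

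Next I would consider the definable collection $\mathcal{C}=\left\{ cl^{af}\left(U\setminus\left\{ a\right\}\right):U\in\mathcal{B}_{a},\ U\subseteq U_{0}\right\}$ and check its three properties. Non-emptiness of each $U\setminus\left\{ a\right\}$ is where the hypothesis that $a$ is \emph{not} isolated enters: no $U\in\mathcal{B}_{a}$ equals $\left\{ a\right\}$, so every $U\subseteq U_{0}$ (in particular $U_{0}$ itself, which shows $\mathcal{C}\neq\emptyset$) contains a point other than $a$. Boundedness is inherited from $X$, closedness is automatic, and $\mathcal{C}$ is filtered because $\mathcal{B}_{a}$ is filtered and $U\mapsto cl^{af}\left(U\setminus\left\{ a\right\}\right)$ is monotone. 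By Fact \ref{lem: J} the intersection $\bigcap\mathcal{C}$ is non-empty, and since every member lies in the $\tau^{af}$-closed set $M\setminus I_{0}$, any $p\in\bigcap\mathcal{C}$ satisfies $p\neq a$.

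Finally I would verify that this $p$ is genuinely a shadow, i.e. $p\in cl^{af}\left(U\right)$ for \emph{every} $U\in\mathcal{B}_{a}$, not just for those contained in $U_{0}$. Given an arbitrary $U\in\mathcal{B}_{a}$, filteredness of $\mathcal{B}_{a}$ yields $V\in\mathcal{B}_{a}$ with $V\subseteq U\cap U_{0}$; then $cl^{af}\left(V\setminus\left\{ a\right\}\right)\in\mathcal{C}$, whence $p\in cl^{af}\left(V\setminus\left\{ a\right\}\right)\subseteq cl^{af}\left(V\right)\subseteq cl^{af}\left(U\right)$. Therefore $p\in\boldsymbol{S}\left(a\right)$ with $p\neq a$, giving $\boldsymbol{S}\left(a\right)\supsetneqq\left\{ a\right\}$. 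I expect the crux to be the choice of $I_{0}$ in the second step, which is what converts a mere compactness argument into one producing a point provably different from $a$; the final filteredness reduction from $U\subseteq U_{0}$ to arbitrary $U$ is then routine bookkeeping.
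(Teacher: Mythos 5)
Your argument is correct, but it takes a genuinely different route from the paper. The paper disposes of this lemma in two lines by quoting Lemma \ref{lem:GeneralProperty2 of Y_a}: since $a$ is not isolated, $\mathcal{B}_a\nsim\mathcal{B}_a^{iso}$, and since $a$ is locally isolated it is neither locally left- nor right-closed nor locally Euclidean, so the remaining alternatives of clause (3) of that lemma fail as well; the equivalence $(1)\Leftrightarrow(3)$ then gives $\boldsymbol{S}(a)\neq\{a\}$ at once. (The printed proof says ``Since it is not locally isolated'' where it plainly means ``Since it is locally isolated''.) What you do instead is rerun, from scratch, the definable-compactness engine that powers the $(1)\Rightarrow(2)$ direction of that same lemma: you take the filtered collection $\left\{ cl^{af}\left(U\setminus\{a\}\right):U\in\mathcal{B}_a,\ U\subseteq U_0\right\}$ in place of the paper's $\left\{ cl^{af}\left(U\setminus I\right)\right\}$, and the interval $I_0$ witnessing local isolation guarantees that every member avoids $a$. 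All your checks go through: non-isolation gives non-emptiness of each $U\setminus\{a\}$, boundedness of $X$ together with Fact \ref{lem: J} produces a point $p$ in the intersection, $p\notin I_0$ forces $p\neq a$, and the filteredness reduction from $U\subseteq U_0$ to arbitrary $U\in\mathcal{B}_a$ shows $p\in\boldsymbol{S}(a)$. Your version is self-contained and isolates exactly where each hypothesis enters; the paper's is shorter only because the classification lemma has already been paid for. Either is acceptable.
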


\begin{proof}
Since $a$ is not isolated , $\mathcal B_a\nsim \mathcal B_a^{iso}$. Since it is not 
locally isolated, the other possibilities of
Lemma \ref{lem:GeneralProperty2 of Y_a} (3) fail as well. Therefore, by Clause (1) of that lemma, $\boldsymbol{S}\left(a\right)\supsetneqq\left\{ a\right\} $.
\end{proof}
Lemmas \ref{lem: (*)} - \ref{lem:GeneralProperty1 of Y_a} can be
easily generalized for $X$ of arbitrary dimension $n$ (by considering
basic $\tau^{af}$-open sets of dimension $n$ and their closure instead
of open and closed-intervals).
\begin{lem}
\label{lem: (*)} For every $a,b\in X$, if $b\in\boldsymbol{S}\left(a\right)$
and $b\ne a$ then $\mathcal{B}_{b}\npreceq\mathcal{B}_{b}^{af}$.
\end{lem}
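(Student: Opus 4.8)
The plan is to prove the statement directly by unwinding the definition of $\npreceq$: I will exhibit a single $\tau$-neighborhood of $b$ that contains no affine neighborhood of $b$, which is exactly the content of $\mathcal{B}_{b}\npreceq\mathcal{B}_{b}^{af}$. The two ingredients driving the argument are the standing Hausdorff assumption on $\tau$ and the defining property of the set of shadows.

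First I would use that $\tau$ is Hausdorff and $a\neq b$ to separate the two points, and then shrink the separating open sets to basic ones: choose $U_{a}\in\mathcal{B}_{a}$ and $U_{b}\in\mathcal{B}_{b}$ with $U_{a}\cap U_{b}=\emptyset$. My claim is that $U_{b}$ is the desired witness, namely that no $V\in\mathcal{B}_{b}^{af}$ satisfies $V\subseteq U_{b}$; establishing this claim is the whole proof.

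Next I would bring in the hypothesis $b\in\boldsymbol{S}\left(a\right)$. By the definition of the set of shadows, $b\in cl^{af}\left(U\right)$ for every $U\in\mathcal{B}_{a}$; in particular $b\in cl^{af}\left(U_{a}\right)$. Suppose, toward a contradiction, that some basic affine neighborhood $V=\left(c_{1},c_{2}\right)\cap X$ with $c_{1}<b<c_{2}$ satisfies $V\subseteq U_{b}$. Since $b$ lies in the $\tau^{af}$-closure of $U_{a}$ in $M$, the affine-open interval $\left(c_{1},c_{2}\right)$ meets $U_{a}$. Because $U_{a}\subseteq X$, this intersection equals $V\cap U_{a}$, so $V\cap U_{a}\neq\emptyset$. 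But then $\emptyset\neq V\cap U_{a}\subseteq U_{b}\cap U_{a}=\emptyset$, a contradiction. Hence no such $V$ exists, and $U_{b}$ witnesses $\mathcal{B}_{b}\npreceq\mathcal{B}_{b}^{af}$.

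The only delicate point---hardly an obstacle---is the bookkeeping in the last step: one must observe that the statement ``every affine-open interval about $b$ meets $U_{a}$'', which is the content of $b\in cl^{af}\left(U_{a}\right)$ computed in $M$, can be transferred to ``$V$ meets $U_{a}$ inside $X$'', and this is legitimate precisely because $U_{a}\subseteq X$. Everything else is a routine unwinding of the definitions of $\boldsymbol{S}\left(a\right)$, of $\npreceq$, and of the affine basis $\mathcal{B}_{b}^{af}$.
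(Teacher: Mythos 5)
Your proof is correct and is essentially the paper's own argument: the paper likewise notes that if $\mathcal{B}_{b}\preceq\mathcal{B}_{b}^{af}$ then every $\tau$-neighborhood of $b$ would contain an affine neighborhood of $b$, which must meet every $U_{a}\in\mathcal{B}_{a}$ because $b\in cl^{af}\left(U_{a}\right)$, contradicting Hausdorffness. You have simply written out in full the details (including the point that $U_{a}\subseteq X$) that the paper leaves implicit.
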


\begin{proof}
If $\mathcal{B}_{b}\preceq\mathcal{B}_{b}^{af}$, then every $U\in\mathcal{B}_{b}$
would contain an open interval $I_{U}\ni b$, hence $b$ could not
be separated from $a$, in contradiction to the fact that $\tau$
is Hausdorff.
\end{proof}
\begin{lem}
\label{lem:C1. b in Y_a if and only if  a in cl(I)} Let $a\in X$ and $b\in M$.
Then

$b\in\boldsymbol{S}\left(a\right)\iff$ For every open-interval $I\ni b$,
$a\in cl\left(I\cap X\right)$.
\end{lem}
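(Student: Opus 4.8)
The plan is to prove both directions at once, by simply unwinding the two (different) closure operations appearing in the statement and observing that the left- and right-hand conditions reduce to literally the same requirement. The only real content is keeping track of which closure is taken in which space.

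First I would rewrite the left-hand side. By definition $b\in\boldsymbol{S}\left(a\right)=\bigcap_{U\in\mathcal{B}_{a}}cl^{af}\left(U\right)$ means precisely that $b\in cl^{af}\left(U\right)$ for every $U\in\mathcal{B}_{a}$. Since $M$ is one-dimensional, the open-intervals $I\ni b$ form a basis for the $\tau^{af}$-neighborhoods of $b$ in $M$; hence $b\in cl^{af}\left(U\right)$ holds exactly when every open-interval $I\ni b$ satisfies $I\cap U\ne\emptyset$. Combining the two, $b\in\boldsymbol{S}\left(a\right)$ holds if and only if for every $U\in\mathcal{B}_{a}$ and every open-interval $I\ni b$ we have $I\cap U\ne\emptyset$.

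Next I would rewrite the right-hand side. The expression $cl\left(I\cap X\right)$ is the $\tau$-closure taken inside $X$, and $\mathcal{B}_{a}$ is a basis of $\tau$-neighborhoods of $a$, so $a\in cl\left(I\cap X\right)$ holds if and only if every $U\in\mathcal{B}_{a}$ meets $I\cap X$. Because each basic neighborhood satisfies $U\subseteq X$, we have $U\cap\left(I\cap X\right)=U\cap I$, so this condition is just $U\cap I\ne\emptyset$ for all $U\in\mathcal{B}_{a}$. Quantifying over all open-intervals $I\ni b$, the right-hand side reads: for every open-interval $I\ni b$ and every $U\in\mathcal{B}_{a}$, $U\cap I\ne\emptyset$. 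This is exactly the condition obtained for the left-hand side (the order of the two universal quantifiers being irrelevant), which completes the proof.

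There is no genuine obstacle here; the statement is essentially definitional. The only points demanding care are to keep the affine closure $cl^{af}$ (taken in $M$, so that $b$ may legitimately lie outside $X$) strictly separate from the $\tau$-closure $cl$ (taken inside $X$), and to use $U\subseteq X$ to collapse $U\cap\left(I\cap X\right)$ to $U\cap I$. All the substance lies in correctly identifying the neighborhood bases on the two sides.
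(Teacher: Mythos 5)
Your proof is correct and follows essentially the same route as the paper: both unwind $b\in cl^{af}(U)$ and $a\in cl(I\cap X)$ into the single condition that $U\cap I\neq\emptyset$ for all $U\in\mathcal{B}_{a}$ and all open-intervals $I\ni b$, and observe the two sides coincide. Your explicit remark that $U\subseteq X$ collapses $U\cap(I\cap X)$ to $U\cap I$ is a small point of care the paper leaves implicit.
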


\begin{proof}
Let $b\in\boldsymbol{S}\left(a\right)=\bigcap_{U\in\mathcal{B}_{a}}cl^{af}\left(U\right)$.
So for each $U\in\mathcal{B}_{a}$, $b\in cl^{af}\left(U\right)$.
That is, for every $U\in\mathcal{B}_{a}$ and every $V\in\mathcal{B}_{b}^{af}$,
we have $U\cap V\neq\emptyset$. Therefore, for every $U\in\mathcal{B}_{a}$
and every open-interval $I\ni b$, we have $U\cap I\neq\emptyset$.
 This exactly means that for every open-interval $I\ni b$, $a\in cl\left(I\right)$.

For the other direction, just follow from bottom to top: Assume that
for every open-interval $I\ni b$ we have $a\in cl\left(I\right)$.
That is, for every open-interval $I\ni b$, for every $U\in\mathcal{B}_{a}$,
we have $U\cap I\neq\emptyset$. This means that for every $U\in\mathcal{B}_{a}$,
$b\in cl^{af}\left(U\right)$. Since $\boldsymbol{S}\left(a\right)=\bigcap_{U\in\mathcal{B}_{a}}cl^{af}\left(U\right)$,
we get $b\in\boldsymbol{S}\left(a\right)$.
\end{proof}
\begin{lem}
\label{lem:newC cl(I)} Let $\left(c,d\right)\subseteq X$ be an open-interval.
Then
\[
\left\{ x\in X:\boldsymbol{S}\left(x\right)\cap\left(c,d\right)\neq\emptyset\right\} \subseteq cl\left(\left(c,d\right)\right)\subseteq\left\{ x\in X:\boldsymbol{S}\left(x\right)\cap\left[c,d\right]\neq\emptyset\right\} .
\]
\end{lem}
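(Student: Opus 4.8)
The plan is to derive both inclusions from the characterization of shadows in Lemma \ref{lem:C1. b in Y_a if and only if a in cl(I)}, with the right-hand inclusion powered by the definable compactness principle of Fact \ref{lem: J}. Throughout I would use that $(c,d)\subseteq X$, so that $(c,d)\cap X=(c,d)$ and $cl^{af}\left((c,d)\right)=[c,d]$.

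The left inclusion is essentially free. Suppose $x\in X$ satisfies $\boldsymbol{S}(x)\cap(c,d)\neq\emptyset$, and pick $b\in\boldsymbol{S}(x)\cap(c,d)$. Since $(c,d)$ is itself an open-interval containing $b$, applying Lemma \ref{lem:C1. b in Y_a if and only if a in cl(I)} with $I=(c,d)$ yields $x\in cl\left((c,d)\cap X\right)=cl\left((c,d)\right)$ at once. So no real work is needed here.

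For the right inclusion, fix $x\in cl\left((c,d)\right)$; the task is to exhibit a point of $\boldsymbol{S}(x)$ lying in $[c,d]$. The idea is to rerun the definable-compactness argument already used in the proof of Lemma \ref{lem:GeneralProperty2 of Y_a}, but now localized to the interval $(c,d)$. I would consider the definable family
\[
\mathcal{C}=\left\{ cl^{af}\left(U\cap(c,d)\right):U\in\mathcal{B}_{x}\right\}.
\]
First I would check that each member is non-empty: because $x\in cl\left((c,d)\right)$, every $U\in\mathcal{B}_{x}$ meets $(c,d)$, so $U\cap(c,d)\neq\emptyset$. Since $X$ is bounded, each $cl^{af}\left(U\cap(c,d)\right)$ is $\tau^{af}$-closed and bounded; and since $\mathcal{B}_{x}$ is a neighborhood basis (hence filtered), with $V\subseteq U$ forcing $cl^{af}\left(V\cap(c,d)\right)\subseteq cl^{af}\left(U\cap(c,d)\right)$, the family $\mathcal{C}$ is a definable filtered collection of $\tau^{af}$-closed, bounded, non-empty sets. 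By Fact \ref{lem: J} its intersection is non-empty.

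It then remains to place this intersection inside $\boldsymbol{S}(x)\cap[c,d]$. For each $U$ one has both $cl^{af}\left(U\cap(c,d)\right)\subseteq cl^{af}(U)$ and $cl^{af}\left(U\cap(c,d)\right)\subseteq cl^{af}\left((c,d)\right)=[c,d]$, so by monotonicity
\[
\emptyset\neq\bigcap_{U\in\mathcal{B}_{x}}cl^{af}\left(U\cap(c,d)\right)\subseteq\left(\bigcap_{U\in\mathcal{B}_{x}}cl^{af}(U)\right)\cap[c,d]=\boldsymbol{S}(x)\cap[c,d],
\]
which is exactly the desired conclusion. The only genuine obstacle is the middle step: confirming that $\mathcal{C}$ really meets the hypotheses of Fact \ref{lem: J} — non-emptiness of each member (coming precisely from $x\in cl\left((c,d)\right)$) and filteredness inherited from $\mathcal{B}_{x}$. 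Once these are verified, the rest is a formal computation with intersections, and the boundary point produced lands in $[c,d]$ rather than $(c,d)$, which is what accounts for the asymmetry between the two sides of the stated inclusion.
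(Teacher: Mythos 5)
Your proof is correct and follows essentially the same route as the paper: the first inclusion via Lemma \ref{lem:C1. b in Y_a if and only if  a in cl(I)}, and the second via a definable filtered family of closed bounded sets together with Fact \ref{lem: J}. In fact you make explicit the compactness step that the paper's own proof passes over silently (the jump from ``each $cl^{af}(U)\cap[c,d]$ is non-empty'' to ``their intersection is non-empty''), which is a welcome clarification rather than a deviation.
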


\begin{proof}
The first inclusion follows from direction $\Rightarrow$ of Lemma
\ref{lem:C1. b in Y_a if and only if  a in cl(I)}. For the second inclusion,
let $x_{0}\in cl\left(\left(c,d\right)\right)$. That is, for every
$U\in\mathcal{B}_{x_{0}}$ we have $U\cap\left(c,d\right)\neq\emptyset$.
Therefore, for every $U\in\mathcal{B}_{x_{0}}$ we also must have
$cl^{af}\left(U\right)\cap cl^{af}\left(\left(c,d\right)\right)\neq\emptyset$.
Since $cl^{af}\left(\left(c,d\right)\right)=\left[c,d\right]$, this
gives $\left(\bigcap_{U\in\mathcal{B}_{x_{0}}}cl^{af}\left(U\right)\right)\cap\left[c,d\right]\neq\emptyset$.
That is, $\boldsymbol{S}\left(x_{0}\right)\cap\left[c,d\right]\neq\emptyset$.
\end{proof}
\begin{lem}
\label{lem:C2. Y_a is finite } For every $a\in X$, $\boldsymbol{S}\left(a\right)$
is a finite set. Moreover, $|\boldsymbol{S}\left(a\right)|$ is uniformly
bounded, that is, there exists $k\in\mathbb{N}$ such that for all
$a\in M$, $|\boldsymbol{S}\left(a\right)|\leq k$.
\end{lem}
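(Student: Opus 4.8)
The plan is to first observe that $\boldsymbol{S}(a)$ is a \emph{definable} subset of $M$, so that by o-minimality it is either finite or contains an interval, and then to rule out the interval case; the uniform bound will then follow from uniform finiteness of definable families. By Lemma \ref{lem:C1. b in Y_a if and only if  a in cl(I)}, $b\in\boldsymbol{S}(a)$ holds iff $a\in cl((c,d)\cap X)$ for \emph{every} pair $c<b<d$. Since $\tau$ has a definable basis $\mathcal{B}=\{\varphi(X,\bar b)\}$, the relation $a\in cl(Z)$ is uniformly definable in the parameters defining $Z$, so $\boldsymbol{S}(a)$ is definable with parameter $a$. Suppose toward a contradiction that it is infinite, hence contains an interval. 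Because $\boldsymbol{S}(a)\subseteq cl^{af}(X)=\bigcup_i[s_i,t_i]\cup F$, after shrinking I may assume there is an open-interval $(c,d)\subseteq(s_i,t_i)\subseteq X$ with $(c,d)\subseteq\boldsymbol{S}(a)$.

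The key claim is $(\star)$: for every $U\in\mathcal{B}_a$ the set $(c,d)\setminus U$ is finite. Indeed $(c,d)\subseteq\boldsymbol{S}(a)=\bigcap_{V\in\mathcal B_a}cl^{af}(V)\subseteq cl^{af}(U)$, and since every affine neighborhood of a point of $(c,d)$ may be taken inside $(c,d)$, this gives $(c,d)\subseteq cl^{af}(U\cap(c,d))$; as $U\cap(c,d)$ is a definable subset of $M$, being affine-dense in $(c,d)$ forces it to be cofinite there. Now the family $\{(c,d)\setminus U:U\in\mathcal{B}_a\}$ is definable and, by $(\star)$, consists of finite sets, so by uniform finiteness of definable families in o-minimal structures there is a $K$ with $|(c,d)\setminus U|\le K$ for all $U\in\mathcal{B}_a$. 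Pick distinct $x_0,\dots,x_K\in(c,d)\setminus\{a\}$. Since $\tau$ is Hausdorff, hence $T_1$, for each $i$ there is a basic $U_i\in\mathcal{B}_a$ with $x_i\notin U_i$; as $\mathcal{B}_a$ is a filtered neighborhood basis there is $U^{\ast}\in\mathcal{B}_a$ with $U^{\ast}\subseteq\bigcap_i U_i$, whence $x_0,\dots,x_K\in(c,d)\setminus U^{\ast}$, contradicting $|(c,d)\setminus U^{\ast}|\le K$. This proves $\boldsymbol{S}(a)$ contains no interval, so it is finite.

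For the uniform bound I consider the definable relation $R=\{(a,b):b\in\boldsymbol{S}(a)\}$, which is definable by the same reformulation via Lemma \ref{lem:C1. b in Y_a if and only if  a in cl(I)}. Each fibre $R_a=\boldsymbol{S}(a)$ is finite by the previous paragraph, so uniform finiteness once more produces a single $k$ with $|\boldsymbol{S}(a)|\le k$ for all $a$. The crux of the argument — and the step I expect to be the main obstacle — is the tension packaged in $(\star)$: a single definable basic neighborhood of $a$ can omit only boundedly many points of $(c,d)$, whereas the filteredness of $\mathcal{B}_a$ forces arbitrarily many simultaneous omissions, and it is precisely an interval of shadows that would manufacture this impossible situation. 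The only points requiring care are the definability of $cl$ (and hence of $R$) from the definable basis, and the reduction placing the putative interval of shadows inside a single cell $(s_i,t_i)\subseteq X$.
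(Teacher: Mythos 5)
Your proof is correct, but it takes a genuinely different route from the paper's. The paper argues directly: for $x\in\boldsymbol{S}(a)$ with $x\neq a$, Hausdorffness provides a basic $U_0\in\mathcal{B}_a$ omitting $x$, so $x$ lies in the affine frontier $cl^{af}(U_0)\setminus U_0$, which is finite because $X\subseteq M$ is one-dimensional; the uniform bound then comes from uniform finiteness applied to the definable family $\{\boldsymbol{S}(a):a\in M\}$, exactly as in your last paragraph. You instead establish definability of $\boldsymbol{S}(a)$ first, invoke the o-minimal dichotomy ``finite or contains an interval,'' and kill the interval case by showing each basic neighborhood would have to be cofinite in that interval, which collides with the $T_1$ property plus filteredness of $\mathcal{B}_a$ via a uniform bound on $|(c,d)\setminus U|$. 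All the steps check out: the reduction placing $(c,d)$ inside a single cell of $X$ is legitimate since the complement of $\bigcup_i(s_i,t_i)$ in $cl^{af}(X)$ is finite, and $(\star)$ follows correctly from density plus o-minimality. What your version buys is that the interplay between Hausdorffness and filteredness is made fully explicit (you take a common refinement $U^{\ast}$ of finitely many separating neighborhoods, where the paper's one-line deduction quietly elides the dependence of $U_0$ on $x$); what it costs is length and the need to justify definability of $\boldsymbol{S}(a)$ up front rather than only at the uniform-bound stage. Both arguments use the same two o-minimal inputs, finiteness of one-dimensional frontiers and uniform finiteness of definable families, so the difference is in architecture rather than in substance.
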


\begin{proof}
Fix $a\in X$. Since $\tau$ is Hausdorff, for every $x\in X$, if
$x\neq a$ then there is $U_{0}\in\mathcal{B}_{a}$ such that $x\notin U_{0}$,
hence if $x\in\boldsymbol{S}\left(a\right)$, then $x\in\left(cl^{af}\left(U_{0}\right)\setminus U_{0}\right)$.
Since $x\in\boldsymbol{S}\left(a\right)=\bigcap_{U\in\mathcal{B}_{a}}cl^{af}\left(U\right)$,
we deduce $x\in\bigcap_{U\in\mathcal{B}_{a}}\left(cl^{af}\left(U\right)\setminus U\right)$,
and thus we get $\boldsymbol{S}\left(a\right)\setminus\left\{ a\right\} \subseteq\bigcap_{U\in\mathcal{B}_{a}} cl^{af}\left(U\right)\setminus U$.

By general properties of the o-minimal topology and since $X\subseteq M$,
for every set $U\subseteq M$ we have that $cl^{af}\left(U\right)\setminus U$
is finite. So
\[
|\boldsymbol{S}\left(a\right)|\leq|cl^{af}\left(U_{0}\right)\setminus U_{0}|+1,
\]
where the $+1$ stands for $a$ itself. In particular, $\boldsymbol{S}\left(a\right)$
is a finite set.

Moreover, when $a$ varies over all elements of $M$ we have that
$\left\{ \boldsymbol{S}\left(a\right):a\in M\right\} $ is a definable
family, hence by o-minimality it has a uniform bound. That is, there
is $k\in\mathbb{N}$ such that for all $a\in M$, $|\boldsymbol{S}\left(a\right)|\leq k$.
\end{proof}
The generalization of Lemma \ref{lem:C2. Y_a is finite } above to arbitrary
dimension would just say that for every $x\in X$, $dim\left(\boldsymbol{S}\left(a\right)\right)<\dim X$.
\begin{lem}
\label{lem:GeneralProperty1 of Y_a} Denote $\boldsymbol{S}\left(a\right)=\left\{ a_{1},a_{2},\ldots a_{r}\right\} $.
Then for any open-intervals $I_{i}\ni a_{i}$, $1\leq i\leq r$, there
exists $U\in\mathcal{B}_{a}$ such that $U\subseteq\bigcup_{i=1}^{r}I_{i}$.
\end{lem}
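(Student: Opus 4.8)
The plan is to argue by contradiction, using definable compactness (Lemma \ref{lem: J}) in exactly the style of the filtered-collection argument that proves $(1)\Rightarrow(2)$ in Lemma \ref{lem:GeneralProperty2 of Y_a}. Set $W:=\bigcup_{i=1}^{r}I_{i}$, which is a $\tau^{af}$-open subset of $M$, and suppose toward a contradiction that no $U\in\mathcal{B}_{a}$ satisfies $U\subseteq W$. Then $U\setminus W\neq\emptyset$ for every $U\in\mathcal{B}_{a}$, and the goal will be to produce a shadow of $a$ lying outside $W$, contradicting the fact that every $a_{i}$ lies in $I_{i}\subseteq W$.

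First I would check that $\left\{ cl^{af}\left(U\setminus W\right):U\in\mathcal{B}_{a}\right\}$ is a definable filtered collection of non-empty $\tau^{af}$-closed and bounded sets. Definability is clear since $\mathcal{B}_{a}$ is definable and $W$ is fixed; non-emptiness is the contradiction hypothesis; closedness is automatic and boundedness follows from $X\subseteq M$ being bounded. The filtered property is inherited from $\mathcal{B}_{a}$: given $U_{1},U_{2}$ one picks $U_{3}\in\mathcal{B}_{a}$ with $U_{3}\subseteq U_{1}\cap U_{2}$, whence $cl^{af}\left(U_{3}\setminus W\right)\subseteq cl^{af}\left(U_{1}\setminus W\right)\cap cl^{af}\left(U_{2}\setminus W\right)$. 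By Lemma \ref{lem: J} the intersection $\bigcap_{U\in\mathcal{B}_{a}}cl^{af}\left(U\setminus W\right)$ is then non-empty; fix a point $b$ in it. Since $cl^{af}\left(U\setminus W\right)\subseteq cl^{af}\left(U\right)$ for every $U$, we get $b\in\bigcap_{U}cl^{af}\left(U\right)=\boldsymbol{S}\left(a\right)$, so $b=a_{i}$ for some $i$ and in particular $b\in I_{i}\subseteq W$.

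The contradiction now comes purely from the openness of $W$: since $b\in W$ and $W$ is $\tau^{af}$-open, $W$ is an affine neighborhood of $b$, so $b\in cl^{af}\left(U\setminus W\right)$ forces $W\cap\left(U\setminus W\right)\neq\emptyset$, which is absurd. Hence some $U\in\mathcal{B}_{a}$ is contained in $W=\bigcup_{i=1}^{r}I_{i}$, as required. I expect the only delicate point to be confirming the hypotheses of Lemma \ref{lem: J}—non-emptiness and the filtered condition for the modified family $cl^{af}\left(U\setminus W\right)$ rather than for $\mathcal{B}_{a}$ itself—but these follow the established template, after which the closing contradiction via openness of $W$ is immediate.
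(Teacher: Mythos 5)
Your proposal is correct and follows essentially the same route as the paper: both argue by contradiction, apply definable compactness (Fact \ref{lem: J}) to the filtered family $\left\{ cl^{af}\left(U\setminus\bigcup_{i}I_{i}\right):U\in\mathcal{B}_{a}\right\}$, and note that the resulting point must be some $a_{i}$, which is impossible since $a_{i}$ lies in the $\tau^{af}$-interior of $\bigcup_{i}I_{i}$. The only cosmetic difference is that you phrase the final contradiction via openness of $W$ intersecting $U\setminus W$ rather than via $a_{i}\in int^{af}\left(I_{i}\right)$; these are the same observation.
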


\begin{proof}
Fix $I_{i}\ni a_{i},1\leq i\leq r$. Assume towards contradiction
that for every $U\in\mathcal{B}_{a}$, $U\nsubseteq\bigcup_{i=1}^{r}I_{i}$.
Therefore, $\left\{ cl^{af}\left(U\setminus\left(\bigcup_{i=1}^{r}I_{i}\right)\right):U\in\mathcal{B}_{a}\right\} $
is a definable filtered family of non-empty $\tau^{af}$-closed and
bounded sets. Thus, each set $cl^{af}\left(U\setminus\left(\bigcup_{i=1}^{r}I_{i}\right)\right)$
is definably compact, and therefore, their intersection is non-empty:
\[
\emptyset\neq\bigcap_{U\in\mathcal{B}_{a}}cl^{af}\left(U\setminus\left(\bigcup_{i=1}^{r}I_{i}\right)\right)\subseteq\bigcap_{U\in\mathcal{B}_{a}}cl^{af}\left(U\right)=\boldsymbol{S}\left(a\right)=\left\{ a_{1},a_{2},\ldots a_{r}\right\} .
\]
 But since $a_{i}\in I_{i}=int^{af}\left(I_{i}\right)$ for each $1\leq i\leq r$,
then
\[
a_{i}\notin\bigcap_{U\in\mathcal{B}_{a}}cl^{af}\left(U\setminus\left(\bigcup_{i=1}^{r}I_{i}\right)\right)
\]
 for each $1\leq i\leq r$, and this is a contradiction.
\end{proof}
As we noted earlier, each $b$ is in $\boldsymbol{S}\left(b\right)$.
Now we prove:
\begin{lem}
\label{lem:C3. 2pts a s.t. b in Y_a} For every $b\in M$, there are
at most two points $a\in X$ other than $b$, such that $b\in\boldsymbol{S}\left(a\right)$.
Moreover, if $b\in X$ is not locally isolated then there exists at most
one such $a$.
\end{lem}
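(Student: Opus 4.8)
The plan is to translate the condition ``$b\in\boldsymbol{S}(a)$ with $a\neq b$'' into a statement about one-sided affine approaches to $b$, and then bound the number of such $a$ coming from each side by $1$. By Lemma~\ref{lem:C1. b in Y_a if and only if  a in cl(I)}, the condition $b\in\boldsymbol{S}(a)$ is equivalent to $a\in cl\left(I\cap X\right)$ for every open-interval $I\ni b$. Writing $I_{\epsilon}=(b-\epsilon,b+\epsilon)$ and splitting $I_{\epsilon}\cap X$ into its part left of $b$, the point $b$ (if $b\in X$), and its part right of $b$, and using that $cl$ commutes with finite unions together with the nestedness in $\epsilon$ of the closures $cl\left((b-\epsilon,b)\cap X\right)$ and $cl\left((b,b+\epsilon)\cap X\right)$, I would first show that every $a\neq b$ with $b\in\boldsymbol{S}(a)$ lies in $L(b):=\bigcap_{\epsilon>0}cl\left((b-\epsilon,b)\cap X\right)$ or in $R(b):=\bigcap_{\epsilon>0}cl\left((b,b+\epsilon)\cap X\right)$. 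It then suffices to prove that each of $L(b)\setminus\{b\}$ and $R(b)\setminus\{b\}$ contains at most one point.

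For the left side, note that by o-minimality $(b-\epsilon,b)\cap X$ is, for all small $\epsilon$, either empty (whence $L(b)=\emptyset$) or the full interval $(b-\epsilon,b)$. Suppose toward a contradiction that $a_{1}\neq a_{2}$ are two distinct points of $L(b)\setminus\{b\}$. Since $\tau$ is Hausdorff I may choose disjoint basic (hence definable) $\tau$-open sets $U_{1}\ni a_{1}$ and $U_{2}\ni a_{2}$. Because $a_{j}\in cl\left((b-\epsilon,b)\right)$ for every $\epsilon$, each $U_{j}$ meets $(b-\epsilon,b)$ for every $\epsilon$, so the definable set $U_{j}\cap(b-\epsilon_{0},b)$ accumulates at $b$ from the left; by o-minimality it therefore contains a whole interval $(b-\eta_{j},b)$. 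Taking $\eta=\min(\eta_{1},\eta_{2})$ gives $(b-\eta,b)\subseteq U_{1}\cap U_{2}=\emptyset$, a contradiction. The argument for $R(b)$ is symmetric, and this yields the bound of two. The step I expect to require the most care is the reduction of the previous paragraph, specifically keeping the assignment of a shadow to the ``left'' or ``right'' side intrinsic, independent of whether $b$ itself lies in the separating neighborhoods; this is precisely why I prefer to work with the fixed sets $L(b),R(b)$ rather than argue by a direct pigeonhole on three neighborhoods.

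For the ``moreover'', assume $b\in X$ is not locally isolated. By Lemma~\ref{lem: locally iso or open or half}, $b$ is then locally right-closed, locally left-closed, or locally Euclidean, and in each case every sufficiently small $U\in\mathcal{B}_{b}$ satisfies $U\cap I_{U}\in\{(b',b],\,[b,b''),\,(b',b'')\}$; in particular such a $U$ contains a full one-sided affine interval at $b$ (a left one if locally right-closed, a right one if locally left-closed, and both if locally Euclidean). I would then show that the side whose interval is swallowed by $U$ contributes no shadow: if, say, $b$ is locally right-closed and $a\in L(b)\setminus\{b\}$, separate $a$ from $b$ by disjoint $\tau$-open $V\ni a$ and small $U\ni b$ with $(b',b)\subseteq U$; since $V$ meets $(b-\epsilon,b)$ for all $\epsilon$, choosing $\epsilon$ with $(b-\epsilon,b)\subseteq(b',b)$ forces $V\cap U\neq\emptyset$, a contradiction, so $L(b)\setminus\{b\}=\emptyset$. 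Symmetrically, local left-closedness gives $R(b)\setminus\{b\}=\emptyset$, and local Euclideanness gives both, so in every case at most one of $L(b)\setminus\{b\}$, $R(b)\setminus\{b\}$ is non-empty and the total count drops to one.
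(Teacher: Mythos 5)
Your proposal is correct and follows essentially the same route as the paper: both arguments reduce to the observation that, by o-minimality, each $a\neq b$ with $b\in\boldsymbol{S}\left(a\right)$ must claim a one-sided affine interval at $b$ (your $L(b)$/$R(b)$ dichotomy is the dual, via Lemma \ref{lem:C1. b in Y_a if and only if  a in cl(I)}, of the paper's choice of an interval $I_{U}\subseteq U$ with endpoint $b$ for each $U\in\mathcal{B}_{a}$), and Hausdorffness permits at most one claimant per side. The ``moreover'' is likewise the same idea --- a non-locally-isolated $b$ occupies at least one side itself --- though your explicit case split via Lemma \ref{lem: locally iso or open or half} spells out what the paper compresses into ``for the same reason, $b$ is locally isolated.''
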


\begin{proof}
If $b\in\boldsymbol{S}\left(a\right)$ then for every $U\in\mathcal{B}_{a}$
we have $b\in cl^{af}\left(U\right)$. Since $\mathcal{M}$ is o-minimal
there is an open-interval $I_{U}\subseteq U$ such that $b\in cl^{af}\left(I_{U}\right)$.
If $b\in X$ then there must be $U\in\mathcal{B}_{a}$ such that $b\notin U$,
hence $b$ is one of the end points of $I_{U}$. If $b\notin X$ then
also $b$ is an end point of $I_{U}$.

Assume that there are two distinct points $a_{1},a_{2}\in X$ different
than $b$, such that $b\in\boldsymbol{S}\left(a_{1}\right),\boldsymbol{S}\left(a_{2}\right)$.
Then, since $\tau$ is Hausdorff, there are disjoint $U_{1}\in\mathcal{B}_{a_{1}}$,
$U_{2}\in\mathcal{B}_{a_{2}}$ with $b\in cl^{af}\left(I_{U_{1}}\right)\cap cl^{af}\left(I_{U_{2}}\right)$.
Because $\tau$ is Hausdorff, $b$ is a left end point of one of these
intervals $I_{U_{i}}$ and a right end point of the other. For the
same reason, $b$ is locally isolated and there cannot be a third
point $a_{3},$ with $b\in\boldsymbol{S}\left(a_{3}\right)$.
\end{proof}

\subsection{\label{subsec: Shadows-generic-points}Shadows of generic points}

In this subsection we work in an elementary extension $\mathcal{N}=\left(N;<,\ldots\right)$
of $\mathcal{M}$ which is sufficiently saturated. Note that now we
have $\left(X,\tau\right)=\left(X\left(N\right),\tau\left(N\right)\right)$
(instead of $\left(X,\tau\right)=\left(X\left(M\right),\tau\left(M\right)\right)$
). It is easy to verify that $\tau\left(N\right)$ is still a Hausdorff
topology on $X\left(N\right)$. We assume that $X$ and $\tau$ are
definable over $\emptyset$. \\

We first recall the following known lemma:
\begin{lem}
\label{lem:Generic open interval} Let $a$ be a generic point in
$X\sub M^n$ over $\emptyset$. Let $U\subseteq M^n$ be a definable affine-open neighborhood of $a$, defined
over a set $A$ of parameters. Then there exists a definable affine-open neighborhood  of $a$, $W\subseteq U$, defined
over a set $B\supseteq A$, such that $dim\left(a/A\right)=dim\left(a/B\right)$.

In particular, let $a\in M$ be a generic point over $\emptyset$.
Then we can choose an arbitrary small interval $\left(a_{1},a_{2}\right)$,
$a_{1}<a<a_{2}$, such that $a$ is still generic over $\left\{ a_{1},a_{2}\right\} $.
\end{lem}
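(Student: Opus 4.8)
The plan is to produce $W$ as a basic affine-open box $\prod_{i=1}^{n}(c_i,d_i)$ around $a=(a_1,\dots,a_n)$, with the endpoints $c_i,d_i$ chosen generically over $A\cup\{a\}$, and then to read off $\dim(a/B)=\dim(a/A)$ from the o-minimal dimension formula. Recall that o-minimal dimension of tuples satisfies additivity, $\dim(\bar u\bar v/C)=\dim(\bar u/C\bar v)+\dim(\bar v/C)$. Writing $\bar e=(c_1,d_1,\dots,c_n,d_n)$ and computing $\dim(a\bar e/A)$ in two ways gives
\[
\dim(a/A\bar e)=\dim(a/A)+\dim(\bar e/Aa)-\dim(\bar e/A),
\]
so it suffices to arrange $\dim(\bar e/Aa)=\dim(\bar e/A)$; I will guarantee this by making $\bar e$ already attain the maximal possible dimension $2n$ over the larger set $Aa$.

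For the construction, since $U$ is affine-open and $a\in U$, first fix some box $\prod_{i=1}^{n}(p_i,q_i)\subseteq U$ with $p_i<a_i<q_i$ (these $p_i,q_i$ are auxiliary and need not enter $B$). Each interval $(p_i,a_i)$ and $(a_i,q_i)$ is nonempty, hence one-dimensional, since $\mathcal{N}$ is an o-minimal expansion of an ordered group. Using saturation of $\mathcal{N}$, choose $c_1\in(p_1,a_1)$ generic over $Aa$, then $d_1\in(a_1,q_1)$ generic over $Aac_1$, and continue, each new endpoint generic over $Aa$ together with the previously chosen endpoints. By additivity each step contributes $1$, so $\dim(\bar e/Aa)=2n$; since $A\subseteq Aa$ and $\dim(\bar e/A)\le 2n$, we also get $\dim(\bar e/A)=2n$, whence $\dim(\bar e/Aa)=\dim(\bar e/A)$ as required.

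Setting $B=A\cup\{c_i,d_i:1\le i\le n\}$ and $W=\prod_{i=1}^{n}(c_i,d_i)$, we have $a\in W\subseteq\prod_{i=1}^{n}(p_i,q_i)\subseteq U$, with $W$ affine-open and defined over $B\supseteq A$, and the displayed identity yields $\dim(a/B)=\dim(a/A)$. The ``in particular'' clause is the case $n=1$, $A=\emptyset$, $U=M$: here $W=(c_1,d_1)=:(a_1,a_2)$ is an interval with $a_1<a<a_2$ over which $a$ remains generic, and it can be taken arbitrarily small by first choosing $p_1,q_1$ close to $a$. The only delicate point, which I expect to be the main obstacle, is reconciling the genericity of the endpoints with the two constraints that the box must contain $a$ and remain inside $U$; this is resolved precisely by selecting each endpoint generically within its prescribed one-dimensional subinterval, so that genericity and the order/containment constraints never conflict.
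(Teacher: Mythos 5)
Your proof is correct. The paper states this lemma as a known fact and gives no proof of its own, so there is nothing to compare it against; your argument --- choosing the $2n$ endpoints of a box successively, each generic over $Aa$ together with the previously chosen endpoints, and then reading off $\dim(a/A\bar e)=\dim(a/A)$ from additivity of o-minimal dimension --- is the standard one, and all the steps (existence of each generic endpoint in a nonempty subinterval, the inequality $\dim(\bar e/A)\ge\dim(\bar e/Aa)$, the two-way computation of $\dim(a\bar e/A)$) check out. One point you raise deserves emphasis: your appeal to saturation is not a convenience but a necessity. The statement as literally written can fail in a non-saturated model --- in $(\mathbb{Q},<,+)$ the point $1$ is generic over $\emptyset$ since $acl(\emptyset)=\{0\}$, yet $1\in dcl(c)$ for every $c\neq 0$, so no interval $(a_{1},a_{2})\ni 1$ keeps $1$ generic over its endpoints. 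The paper is entitled to the lemma only because it is invoked in Subsection \ref{subsec: Shadows-generic-points}, which works throughout in a sufficiently saturated elementary extension $\mathcal{N}$; that is exactly the setting in which your choice of generic endpoints is available, and you correctly flagged it.
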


\begin{lem}
\label{lem: a generic implies b generic} If $a\in X$ and $b\in\boldsymbol{S}\left(a\right)$,
then $b$ is generic over $\emptyset$ if and only if $a$ is generic
over $\emptyset$.
\end{lem}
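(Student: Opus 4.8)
The plan is to view the shadow relation $R$, defined by $R(x,y)\iff y\in\boldsymbol{S}(x)$, as a finite-to-finite $\emptyset$-definable binary relation, and then to transfer genericity from one coordinate to the other using the symmetry of algebraic closure (equivalently, the additivity of o-minimal dimension). Throughout, recall that for a single element $c\in N$, being generic over $\emptyset$ means exactly $\dim(c/\emptyset)=1$, i.e.\ $c\notin\mathrm{acl}(\emptyset)$; so it suffices to prove $\dim(a/\emptyset)=\dim(b/\emptyset)$.

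First I would verify that $R$ is $\emptyset$-definable. By Lemma \ref{lem:C1. b in Y_a if and only if  a in cl(I)}, $y\in\boldsymbol{S}(x)$ holds if and only if for every open-interval $I\ni y$ we have $x\in cl(I\cap X)$. Unwinding the $\tau$-closure through the $\emptyset$-definable basis $\mathcal{B}$ (so that $x\in cl(I\cap X)$ becomes ``every $U\in\mathcal{B}_{x}$ meets $I\cap X$'') and quantifying over the two endpoints of $I$, this is a first-order condition on $(x,y)$. Since $X$ and $\tau$ are definable over $\emptyset$, so is $R$.

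Next I would read off the two finiteness statements. For fixed $x=a$ the fibre of $R$ is exactly $\boldsymbol{S}(a)$, which is finite by Lemma \ref{lem:C2. Y_a is finite }; as this fibre is definable over $a$, it follows that $b\in\boldsymbol{S}(a)\subseteq\mathrm{acl}(a)$. For fixed $y=b$ the fibre $\{x\in X:b\in\boldsymbol{S}(x)\}$ has at most three elements by Lemma \ref{lem:C3. 2pts a s.t. b in Y_a}; as it is definable over $b$ and contains $a$, we get $a\in\mathrm{acl}(b)$.

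Finally I would close with the exchange property: from $b\in\mathrm{acl}(a)$ we obtain $\dim(ab/\emptyset)=\dim(a/\emptyset)$, and from $a\in\mathrm{acl}(b)$ we obtain $\dim(ab/\emptyset)=\dim(b/\emptyset)$, whence $\dim(a/\emptyset)=\dim(b/\emptyset)$ and the desired equivalence follows. I do not expect a genuine obstacle here, since the statement is essentially a bookkeeping consequence of the finiteness Lemmas \ref{lem:C2. Y_a is finite } and \ref{lem:C3. 2pts a s.t. b in Y_a}. The only two points deserving care are the definability of $R$ (supplied by Lemma \ref{lem:C1. b in Y_a if and only if  a in cl(I)}) and the fact that $b$ may lie in $N\setminus X$; the latter is harmless, as dimension and algebraicity are defined for all elements of $N$, not merely for points of $X$.
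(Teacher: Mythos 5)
Your proof is correct and follows essentially the same route as the paper: $b\in\mathrm{acl}(a)$ from the finiteness of $\boldsymbol{S}(a)$ (Lemma \ref{lem:C2. Y_a is finite }), $a\in\mathrm{acl}(b)$ from the finiteness of $\{x:b\in\boldsymbol{S}(x)\}$ (Lemma \ref{lem:C3. 2pts a s.t. b in Y_a}), and then mutual algebraicity gives the equivalence of genericity. The extra care you take with the $\emptyset$-definability of the shadow relation and with the case $b\notin X$ is sound but not needed beyond what the paper records.
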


\begin{proof}
Since $\boldsymbol{S}\left(a\right)$ is an $a$-definable finite
set by Lemma \ref{lem:C2. Y_a is finite }, $b\in\boldsymbol{S}\left(a\right)$
implies that $b\in acl\left(a\right)$. By Lemma \ref{lem:C3. 2pts a s.t. b in Y_a},
there is a finite number of points $a'\in X$ such that $b\in\boldsymbol{S}\left(a'\right)$.
Since the set of all these points is definable over $b$, we also
have $a\in acl\left(b\right)$. It follows that $b$ is generic over
$\emptyset$ if and only if  $a$ is generic over $\emptyset$.
\end{proof}
\begin{lem}
\label{lem:Lemma Y_b in Y_a} Let $a\in X$ be a generic point over
$\emptyset$, and let $b\in\boldsymbol{S}\left(a\right)$. Then $\boldsymbol{S}\left(b\right)\subseteq\boldsymbol{S}\left(a\right)$.
\end{lem}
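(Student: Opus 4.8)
The plan is to use the interval characterization of shadows from Lemma~\ref{lem:C1. b in Y_a if and only if  a in cl(I)}. To prove $c\in\boldsymbol{S}(a)$ for a given $c\in\boldsymbol{S}(b)$, that lemma reduces the task to showing $a\in cl(K\cap X)$ for every open-interval $K\ni c$; and since $cl(K_{1}\cap X)\subseteq cl(K_{2}\cap X)$ whenever $K_{1}\subseteq K_{2}$, it is enough to treat arbitrarily small $K$. First I would note that $b$ is generic over $\emptyset$ by Lemma~\ref{lem: a generic implies b generic}, and then, applying the same lemma to $c\in\boldsymbol{S}(b)$, that $c$ is generic as well. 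In particular $b$ and $c$ lie in the affine interior of $X$, so I may fix $K=(c-\eta,c+\eta)\subseteq X$ with $\eta$ small.

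Next I would produce a continuous selection of the shadow map near $b$. Because $\tau$ is $\emptyset$-definable, the relation $R=\{(x,y):y\in\boldsymbol{S}(x)\}$ is $\emptyset$-definable with finite fibres by Lemma~\ref{lem:C2. Y_a is finite }. Decomposing $R$ into cells compatible with the projection to the $x$-axis, and using that $b$ is generic (hence lies in an open cell of the base, off the finite exceptional set of the decomposition), I obtain an open-interval $J\ni b$ with $J\subseteq X$ and a definable continuous function $g$ on $J$ such that $g(x)\in\boldsymbol{S}(x)$ for all $x\in J$ and $g(b)=c$. Shrinking $J$ to an interval $I'\ni b$ by continuity of $g$ at $b$, I may also assume $g(I')\subseteq K$.

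Finally I would combine this with the two inclusions of Lemma~\ref{lem:newC cl(I)}. For each $x\in I'$ we have $g(x)\in\boldsymbol{S}(x)\cap K$, so the first inclusion of that lemma gives $x\in cl(K\cap X)$; hence $I'\cap X\subseteq cl(K\cap X)$. On the other hand, since $b\in\boldsymbol{S}(a)$, Lemma~\ref{lem:C1. b in Y_a if and only if  a in cl(I)} applied to the interval $I'\ni b$ yields $a\in cl(I'\cap X)$. As $I'\cap X\subseteq cl(K\cap X)$ and the closure operator is monotone and idempotent, $a\in cl(I'\cap X)\subseteq cl(K\cap X)$. Since $K$ was an arbitrary small interval around $c$, Lemma~\ref{lem:C1. b in Y_a if and only if  a in cl(I)} gives $c\in\boldsymbol{S}(a)$, as required.

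The main obstacle is the middle step: extracting an honest continuous branch $g$ of $x\mapsto\boldsymbol{S}(x)$ passing through $(b,c)$ and satisfying $g(x)\in\boldsymbol{S}(x)$ on a whole affine neighbourhood of $b$. This is exactly where the genericity of $a$ (equivalently, of $b$) is indispensable, since it keeps $b$ off the finite bad set of the cell decomposition of $R$, so that the finite fibre $\boldsymbol{S}(x)$ varies continuously near $b$ and one of its branches hits $c$ at $x=b$; Lemma~\ref{lem:Generic open interval} is available should one need to keep $b$ or $c$ generic over the endpoints while shrinking intervals. The remaining steps are routine manipulations of the closure operator together with the interval characterizations already established.
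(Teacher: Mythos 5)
Your argument is correct, but it reaches the conclusion by a genuinely different route than the paper. The paper works from the side of $a$: it parametrizes the entire shadow set near $a$ by continuous, strictly monotone branches $f_{1},\dots,f_{r}$ with pairwise disjoint images on an interval $J\ni a$, and its key step is a Claim that every sufficiently small $\tau$-neighbourhood of $b$ is contained in the tube $\bigcup_{i}f_{i}\left(J\right)$; proving that Claim uses Lemma \ref{lem:GeneralProperty1 of Y_a} (a definable-compactness statement) together with a genericity argument that spreads a putative failure ``for all $W\in\mathcal{B}_{b}$'' onto a whole affine interval around $b$. Intersecting the affine closures of these tubes over shrinking $J$ then pins $\boldsymbol{S}\left(b\right)$ inside $\boldsymbol{S}\left(a\right)$ straight from the definition. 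You instead fix one target $c\in\boldsymbol{S}\left(b\right)$, extract a single continuous branch of the shadow relation through $\left(b,c\right)$ (centred at $b$ rather than at $a$), and close the argument with a transitivity-of-adherence computation using only the $\tau$-closure characterizations: $a$ adheres to every interval $I'\ni b$ by Lemma \ref{lem:C1. b in Y_a if and only if  a in cl(I)}, every point of $I'$ adheres to $K\ni c$ by the first inclusion of Lemma \ref{lem:newC cl(I)}, hence $a\in cl\left(I'\right)\subseteq cl\left(K\right)$, and letting $K$ shrink gives $c\in\boldsymbol{S}\left(a\right)$. This bypasses the paper's Claim and Lemma \ref{lem:GeneralProperty1 of Y_a} altogether and needs less of the branch structure (no disjointness of images or strict monotonicity); what it gives up is the stronger, reusable information that $\tau$-neighbourhoods of $b$ are trapped in the shadow tube of $a$. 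Both proofs use genericity at the same essential point, namely to obtain a continuous definable selection from the finite-fibred, $\emptyset$-definable relation $y\in\boldsymbol{S}\left(x\right)$, and your appeal to Lemma \ref{lem: a generic implies b generic} to transfer genericity from $a$ to $b$ and then to $c$ is exactly what legitimates this.
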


\begin{proof}
Since $a$ is generic we must have $\boldsymbol{S}\left(a\right),\boldsymbol{S}\left(b\right)\subseteq X$.
By Lemma \ref{lem:C2. Y_a is finite }, $\boldsymbol{S}\left(a\right)$
is a finite set. Denote $\boldsymbol{S}\left(a\right)=\left\{ a_{1},a_{2},\ldots a_{r}\right\} $
for some fixed ordering of $\boldsymbol{S}\left(a\right)$ in which
$a_{1}=a$. By the genericity of $a$, there is a $\emptyset$-definable
open-interval $J_{1}\subseteq N$ such that for all $x\in J_{1}$,
$|\boldsymbol{S}\left(x\right)|=r$.

Now, we can define $\emptyset$-definable functions $f_{i}:J_{1}\rightarrow N$,
$1\leq i\leq r$, such that for every $x\in J_{1}$, we have $\boldsymbol{S}\left(x\right)=\left\{ f_{1}\left(x\right),\ldots,f_{r}\left(x\right)\right\} $
and $f_{1}\left(x\right)=x$. So each $f_{i}\left(x\right)$ is a
shadow of $x$. By Lemma \ref{lem:C3. 2pts a s.t. b in Y_a}, the
$f_{i}$ cannot be constant on any open-interval. Hence, by the Monotonicity
Theorem for o-minimal structures~\cite{Dries} and the genericity
of $a$, there is a $\emptyset$-definable open-interval $J_{2}\subseteq J_{1}$,
$a\in J_{2}$, such that each $f_{i}$ is continuous (with respect
to the $\tau^{af}$-topology) and strictly monotone on $J_{2}$. Therefore,
$f_{i}|_{J_{2}}:J_{2}\rightarrow f_{i}\left(J_{2}\right)$ is a homeomorphism
(with respect to $\tau^{af}$) for all $1\leq i\leq r$.

Since $\boldsymbol{S}\left(a\right)$ is a finite set, there exists
an open-interval $J_{3}\subseteq J_{2}$, $a\in J_{3}$, such that
for all $1\leq i\neq j\leq r$, $f_{i}\left(J_{3}\right)\cap f_{j}\left(J_{3}\right)=\emptyset$.
Note that we might need additional parameters to define $J_{3}$,
but by Lemma \ref{lem:Generic open interval}, we can pick $J_{3}$
such that $a$ is still generic over its end points. To simplify notation
we absorb these additional parameters into the language, and thus
assume that $J_{3}$ is definable over $\emptyset$.

Recall that $b\in\boldsymbol{S}\left(a\right)$. We now prove a claim:\\
\\
\textbf{Claim.} For every open-interval $J\subseteq J_{3}$ such that
$a\in J$, there is $W\in\mathcal{B}_{b}$ such that $W\subseteq\bigcup_{i=1}^{r}f_{i}\left(J\right)$.$\,\,\,\,\,\,\,\,\,\,\,\,\,\,\,\,\,\,\,\,\,\,\,\,\,\,\,\,\,\,\,\,\,\,\,\,\,\,\,\,\,\,\,\,\,\,\,\,\,\,\,\,\,\,\,\,\,\,\,\,\,\,\,\,\,\,\,\,\,\,\,\,\,\,\,\,\,\,\,\,\,\,\,\,\,\,\,\,\,\,\,\,\,\,\,\,\,\,\,\,\,\,\,\,\,\,\,\,\,\,\,\,\,\,\,\,\,\,\,\,\,\,\,\,\,\,\,\,\,\,\,\,\,\,\,\,\,\,\,\,\,\,\,\,\,\,\,\,\,\,\,\,\,\,\,\,\,\,\,\,\,\,\,\,\,\,\,\,\,\,\,\,\,\,\,\,\,\,\,\,\,$\smallskip{}
\emph{Proof. }Assume towards contradiction that for some open-interval
$J\subseteq J_{3}$ such that $a\in J$,
\[
(*)\text{ for every }W\in\mathcal{B}_{b},\text{ we have }W\nsubseteq\bigcup_{i=1}^{r}f_{i}\left(J\right).
\]

By Lemma \ref{lem:Generic open interval}, we can replace $J$ by
an open-interval $J'\subseteq J$ with $a\in J'$, such that $b$
is still generic over the parameters defining $J'$. Note that we
still have that for every $W\in\mathcal{B}_{b}$, $W\nsubseteq\bigcup_{i=1}^{r}f_{i}\left(J'\right)$.
Thus we may assume that $b$ is generic over the parameters defining
$J$. We can now formulate $(*)$ as a definable property of $b$,
call it $\varphi\left(b\right)$. Since $b$ is generic over the parameters
defining $J$, there exists an open-interval $I\ni b$ such that $\varphi\left(y\right)$
is true for all $y\in I$.

By Lemma \ref{lem:GeneralProperty1 of Y_a}, there exists $U\in\mathcal{B}_{a}$
such that $U\subseteq\bigcup_{i=1}^{r}f_{i}\left(J\right)$. Clearly,
no $y\in U$ satisfies $\varphi\left(y\right)$, hence $U\cap I=\emptyset$.
It follows that $b\notin cl^{af}\left(U\right)$, contradicting the
fact that $b\in\boldsymbol{S}\left(a\right)$. $\boxempty$\\

Now we are ready to finish the proof of Lemma \ref{lem:Lemma Y_b in Y_a}.
By the Claim, given an open-interval $J\subseteq J_{3}$ with $a\in J$,
there is $W\in\mathcal{B}_{b}$ such that $W\subseteq\bigcup_{i=1}^{r}f_{i}\left(J\right)$.
Thus,
\[
cl^{af}\left(W\right)\subseteq\bigcup_{i=1}^{r}cl^{af}\left(f_{i}\left(J\right)\right).
\]
 Recall that $\boldsymbol{S}\left(b\right)=\bigcap_{V\in\mathcal{B}_{b}}cl^{af}\left(V\right)$,
and therefore
\[
\boldsymbol{S}\left(b\right)\subseteq\bigcup_{i=1}^{r}cl^{af}\left(f_{i}\left(J\right)\right),\text{ for any open-interval \ensuremath{J\ni a}.}
\]

By the continuity of the $f_{i}$, the intersection of all $\bigcup_{i=1}^{r}cl^{af}\left(f_{i}\left(J\right)\right)$,
as $J$ varies over all open-intervals containing $a$, is exactly
$\left\{ a_{1},a_{2},\ldots a_{r}\right\} $. Thus, $\boldsymbol{S}\left(b\right)\subseteq\left\{ a_{1},a_{2},\ldots a_{r}\right\} =\boldsymbol{S}\left(a\right)$.
\end{proof}
We give an example of $a\in X$ and of $b\in\boldsymbol{S}\left(a\right)$
that are not generic over $\emptyset$, for which the result of Lemma
\ref{lem:Lemma Y_b in Y_a} is not true:
\begin{example}
Let $X$ be an open-interval. We define a definable Hausdorff topology
on $X$, by describing small enough basic neighborhoods of three distinct
non-generic points $a,b,c\in X$: $U_{a}\in\mathcal{B}_{a}$ is of
the form $U_{a}=\left\{ a\right\} \cup\left(b-\epsilon,b\right)$,
$U_{b}\in\mathcal{B}_{b}$ is of the form $U_{b}=\left[b,b+\epsilon\right)\cup\left(c,c+\epsilon\right)$,
and $c$ is an isolated point. Every other $x\in X$ is Euclidean.
One can verify that $\boldsymbol{S}\left(a\right)=\left\{ a,b\right\} $
and $\boldsymbol{S}\left(b\right)=\left\{ b,c\right\} $, so $\boldsymbol{S}\left(b\right)\nsubseteq\boldsymbol{S}\left(a\right)$
and thus Lemma \ref{lem:Lemma Y_b in Y_a} is not true in this case.
\end{example}

\subsection{$\boldsymbol{\tau\subseteq\tau^{af}|_{X}}$ (every $\boldsymbol{\tau}$-open
set is also $\boldsymbol{\tau^{af}}$-open in $\boldsymbol{X}$)}

The purpose of this subsection is to analyze a special case, when
$\tau$ coarsens the affine topology on $X$. Namely, every $\tau$-open
set can be written as the intersection of $X$ and a definable $\tau^{af}$-open
subset of $M$.  We aim to prove the next theorem:
\begin{thm}
\label{Let--and-finite} Assume that $\tau\subseteq\tau^{af}|_{X}$,
that is, for all $x\in X$, $\mathcal{B}_{x}\preceq\mathcal{B}_{x}^{af}$.
Then there are at most finitely many points $a\in X$ such that $\mathcal{B}_{a}\nsim\mathcal{B}_{a}^{af}$.
Equivalently, there are at most finitely many $a\in X$ such that
$\mathcal{B}_{a}^{af}\npreceq\mathcal{B}_{a}$. \\

\emph{We first introduce:}
\end{thm}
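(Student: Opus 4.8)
The plan is to reduce the statement to a finiteness claim about shadows and then exploit the constraint that the hypothesis places on them. By Lemma \ref{lem:GeneralProperty2 of Y_a}, for every $a\in X$ the condition $\mathcal{B}_{a}^{af}\preceq\mathcal{B}_{a}$ is equivalent to $\boldsymbol{S}\left(a\right)=\left\{a\right\}$. Since the hypothesis gives $\mathcal{B}_{a}\preceq\mathcal{B}_{a}^{af}$ for every $a$, the failure $\mathcal{B}_{a}\nsim\mathcal{B}_{a}^{af}$ is exactly the failure $\mathcal{B}_{a}^{af}\npreceq\mathcal{B}_{a}$, i.e.\ $\boldsymbol{S}\left(a\right)\neq\left\{a\right\}$. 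Hence it suffices to prove that $G:=\left\{a\in X:\boldsymbol{S}\left(a\right)\neq\left\{a\right\}\right\}$ is finite.

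Next I would use the hypothesis to locate the extra shadows. Fix $a\in G$; since $a\in\boldsymbol{S}\left(a\right)$ always, there is some $b\in\boldsymbol{S}\left(a\right)$ with $b\neq a$. I claim $b\notin X$. Indeed, if $b\in X$ then, because the hypothesis gives $\mathcal{B}_{b}\preceq\mathcal{B}_{b}^{af}$, this would contradict Lemma \ref{lem: (*)}, which asserts $\mathcal{B}_{b}\npreceq\mathcal{B}_{b}^{af}$ for any $b\in X$ that is a shadow, distinct from it, of some point. On the other hand, every shadow lies in $cl^{af}\left(X\right)$, since $\boldsymbol{S}\left(a\right)=\bigcap_{U\in\mathcal{B}_{a}}cl^{af}\left(U\right)$ and each $U\subseteq X$ gives $cl^{af}\left(U\right)\subseteq cl^{af}\left(X\right)$. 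Therefore $b\in cl^{af}\left(X\right)\setminus X$.

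The final step is a counting argument. As $X$ is a finite union of bounded open-intervals and finitely many points in $M$, the set $E:=cl^{af}\left(X\right)\setminus X$ is finite (it is contained in the finite set of interval endpoints $\left\{s_{i},t_{i}\right\}$). We have shown that each $a\in G$ admits a shadow lying in $E$. By Lemma \ref{lem:C3. 2pts a s.t. b in Y_a}, each fixed $b\in M$ is a shadow of at most two points of $X$ other than $b$; applying this to each element of $E$ (note every element of $E$ lies outside $X$, so the clause ``other than $b$'' is automatic for $a\in X$) yields $|G|\leq 2|E|<\infty$, completing the proof.

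The main conceptual point---and the only nonroutine one---is the observation that the coarsening hypothesis $\tau\subseteq\tau^{af}|_{X}$ forces every nontrivial shadow of a point of $X$ to leave $X$, via Lemma \ref{lem: (*)}. Once this is in hand, the finiteness of $cl^{af}\left(X\right)\setminus X$ together with the ``at most two preimages'' bound of Lemma \ref{lem:C3. 2pts a s.t. b in Y_a} finishes the argument mechanically, so I do not anticipate a serious obstacle beyond correctly invoking Lemma \ref{lem: (*)} with the roles of $a$ and $b$ as above.
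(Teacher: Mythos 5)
Your argument is correct and follows essentially the same route as the paper: the crux in both is that, under the coarsening hypothesis, any shadow $b\neq a$ of a point $a$ with $\mathcal{B}_{a}\nsim\mathcal{B}_{a}^{af}$ must lie outside $X$ (you get this by combining the hypothesis with Lemma \ref{lem: (*)}; the paper, in Lemma \ref{Let--point-ray}, runs the same Hausdorff separation argument directly), hence in the finite set $cl^{af}\left(X\right)\setminus X$. The only difference is in the final count: you bound the number of preimages of each boundary point under the shadow relation via Lemma \ref{lem:C3. 2pts a s.t. b in Y_a}, whereas the paper phrases the same count as a pigeonhole argument on intersecting generalized rays; both give a bound linear in the number of intervals comprising $X$.
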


\begin{defn}
Let $X=\left(s_{1},t_{1}\right)\sqcup\ldots\sqcup\left(s_{l},t_{l}\right)\sqcup F$
such that $F$ is finite be a definable subset of $M$. If $s_{i}\notin F$,
then each set of the form $\left(s_{i},r\right)$ for $s_{i}<r\leq t_{i}$,
is called a \emph{left generalized ray of $X$.} If $t_{i}\notin F$,
then each set of the form $\left(r,t_{i}\right)$ for $s_{i}\leq r<t_{i}$,
is called a \emph{right} \emph{generalized ray of $X$.} A left generalized
ray and a right generalized ray are both called \emph{generalized
rays. }
\end{defn}

For example, if $M=\mathbb{R}$ and $X=\left(3,\pi\right]=\left(3,\pi\right)\cup\left\{ \pi\right\} $,
then $\left(3,3.1\right)$ is a left generalized ray, but $\left(3.1,\pi\right)$
is not a generalized ray.
\begin{rem*}
Note that if $U$ is a definable subset of $X$ and $b\in cl^{af}\left(U\right)\setminus X$,
then $b$ is an endpoint of a generalized ray contained in $U$.
\end{rem*}
We will see that whenever $a\in X\setminus F$ has $\mathcal{B}_{a}\nsim\mathcal{B}_{a}^{af}$,
every neighborhood $U\in\mathcal{B}_{a}$ contains a generalized ray.
As a result, we conclude that there are at most $2l$ points $a\in X$
such that $\mathcal{B}_{a}\nsim\mathcal{B}_{a}^{af}$: Indeed, if
there were more than $2l$ such points, then by the pigeonhole principle,
two of those points would have intersecting generalized rays, in contradiction
to the fact that $\tau$ is Hausdorff.

Therefore, in order to prove Theorem \ref{Let--and-finite}, it is
sufficient to prove:
\begin{lem}
\label{Let--point-ray} Let $a\in X\setminus F$ be a point such that
$\mathcal{B}_{a}\nsim\mathcal{B}_{a}^{af}$. Then every neighborhood
$U\in\mathcal{B}_{a}$ contains a generalized ray.
\end{lem}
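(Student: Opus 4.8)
The plan is to locate, for the given $a$, a shadow of $a$ lying \emph{outside} $X$, and then to extract a generalized ray from the o-minimal structure of each basic neighbourhood of $a$. First I would translate the hypotheses into a statement about bases: the standing assumption $\tau\subseteq\tau^{af}|_{X}$ gives $\mathcal{B}_{a}\preceq\mathcal{B}_{a}^{af}$, and combined with $\mathcal{B}_{a}\nsim\mathcal{B}_{a}^{af}$ this is precisely $\mathcal{B}_{a}\precneqq\mathcal{B}_{a}^{af}$. Lemma \ref{lem: B_a < B_a^M --> Y_a =00005Cneq =00007Ba=00007D} then forces $\boldsymbol{S}\left(a\right)\neq\left\{ a\right\}$, so, since always $a\in\boldsymbol{S}\left(a\right)$, there is a shadow $b\in\boldsymbol{S}\left(a\right)$ with $b\neq a$.

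The heart of the argument is to show that any such $b$ satisfies $b\notin X$; this is exactly where the coarsening hypothesis is essential. If instead $b\in X$, then from $b\in\boldsymbol{S}\left(a\right)$ and $b\neq a$, Lemma \ref{lem: (*)} would yield $\mathcal{B}_{b}\npreceq\mathcal{B}_{b}^{af}$, contradicting $\tau\subseteq\tau^{af}|_{X}$ (which asserts $\mathcal{B}_{x}\preceq\mathcal{B}_{x}^{af}$ for every $x\in X$, in particular for $x=b$). Hence every shadow of $a$ distinct from $a$ must lie in $cl^{af}\left(X\right)\setminus X$.

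Finally I would convert the out-of-$X$ shadow into a ray. By definition $b\in\boldsymbol{S}\left(a\right)=\bigcap_{U\in\mathcal{B}_{a}}cl^{af}\left(U\right)$, so $b\in cl^{af}\left(U\right)\setminus X$ for \emph{every} $U\in\mathcal{B}_{a}$. Because $b\in cl^{af}\left(X\right)\setminus X$ and $X=\left(s_{1},t_{1}\right)\sqcup\cdots\sqcup\left(s_{l},t_{l}\right)\sqcup F$, the point $b$ is one of the endpoints $s_{i}$ or $t_{i}$ with $b\notin F$. Writing $U$ as a finite union of points and intervals by o-minimality, $b$ must be the open endpoint of one of these intervals, and near $b$ the only part of $X$ available is the adjacent interval $\left(s_{i},t_{i}\right)$; thus that component has the form $\left(s_{i},r\right)$ (if $b=s_{i}$) or $\left(r,t_{i}\right)$ (if $b=t_{i}$), i.e.\ a generalized ray contained in $U$. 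This is precisely the content of the Remark preceding the lemma, applied to each $U\in\mathcal{B}_{a}$, which finishes the proof.

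The only genuinely substantive point is the middle step --- the identification of a shadow outside $X$ through the clash between Lemma \ref{lem: (*)} and the global coarsening hypothesis; everything before it is bookkeeping with $\precneqq$ and the definition of $\boldsymbol{S}\left(a\right)$, and everything after it is a routine appeal to o-minimal cell decomposition. I would also note that the restriction $a\in X\setminus F$ is not actually used in producing the ray itself: it reflects only that the finitely many points of $F$ are absorbed directly into the later pigeonhole count of ``bad'' points that yields the bound $2l$.
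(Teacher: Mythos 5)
Your proof is correct and takes essentially the same route as the paper's: both pass from $\mathcal{B}_{a}\nsim\mathcal{B}_{a}^{af}$ to $\mathcal{B}_{a}\precneqq\mathcal{B}_{a}^{af}$, extract a shadow $b\in\boldsymbol{S}\left(a\right)$ with $b\neq a$, show $b\notin X$, and then apply the remark preceding the lemma to get a generalized ray inside every $U\in\mathcal{B}_{a}$. The only cosmetic difference is that you establish $b\notin X$ by citing Lemma \ref{lem: (*)} against the global coarsening hypothesis, whereas the paper inlines the identical Hausdorff-separation argument that proves that lemma.
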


\begin{proof}
Since we are working under the assumption that every open set is also
$\tau^{af}$-open in $X$, $\mathcal{B}_{a}\nsim\mathcal{B}_{a}^{af}$
implies $\mathcal{B}_{a}\precneqq\mathcal{B}_{a}^{af}$. Therefore,
by Lemma \ref{lem: B_a < B_a^M --> Y_a =00005Cneq =00007Ba=00007D},
we have $\boldsymbol{S}\left(a\right)\neq\left\{ a\right\} $. Let
$b\in\boldsymbol{S}\left(a\right)$, $b\neq a$. We claim that $b\notin X$:
Every $U\in\mathcal{B}_{a}$ is also a $\tau^{af}$-open neighborhood
of $a$. So if we had $b\in X$, then since $b\in cl^{af}\left(U\right)$
for every $U\in\mathcal{B}_{a}$, $a$ and $b$ could not be separated,
contradicting the fact that $\tau$ is Hausdorff. Therefore $b\notin X$,
and as remarked above $U$ must contain a generalized ray.~$\square$

\medskip{}

This ends the proof of Theorem \ref{Let--and-finite}.
\end{proof}

\subsection{Almost $\boldsymbol{\tau\subseteq\tau^{af}|_{X}}$ }

The next technical lemma states two equivalent conditions that clarify
what we mean by ``almost $\tau\subseteq\tau^{af}|_{X}$''.
\begin{lem}
\label{lem:almost T^M|_X} Let $X\subseteq M$ be a definable set,
and let $\tau$ be a definable Hausdorff topology on $X$. Let $G\subseteq X$
be a finite set. The following are equivalent:

(1) For every $a\in X\setminus G$, $\mathcal{B}_{a}^{\tau}\preceq\mathcal{B}_{a}^{af}$.

(2) Every $\tau$-open subset of $X\setminus G$ is also $\tau^{af}$-open
in $X$ (that is, $\tau|_{X\setminus G}\subseteq\tau^{af}|_{X}$).
\end{lem}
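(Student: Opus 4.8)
The plan is to prove both implications directly, translating ``$W$ is $\tau^{af}$-open in $X$'' into the statement that every point of $W$ has an affine interval $I$ with $I\cap X$ contained in $W$, and translating $\mathcal{B}_a\preceq\mathcal{B}_a^{af}$ into the statement that every $\tau$-neighborhood of $a$ contains a set of the form $I\cap X$ with $I$ an affine interval around $a$. With these two reformulations in hand, each direction is a short neighborhood-basis computation; the finiteness of $G$ enters only to let me shrink affine intervals away from $G$.

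For $(1)\Rightarrow(2)$, I would take a $\tau$-open subset $W$ of $X\setminus G$, write it as $W=\tilde{W}\cap(X\setminus G)$ for some $\tau$-open $\tilde{W}\sub X$, and fix $a\in W$. Since $a\in\tilde{W}$ there is $U\in\mathcal{B}_a$ with $U\sub\tilde{W}$, and since $a\in X\setminus G$, hypothesis (1) gives an affine interval $I\ni a$ with $I\cap X\sub U$. Because $G$ is finite and $a\notin G$, I may shrink $I$ so that $I\cap G=\emptyset$; then $I\cap X\sub U\cap(X\setminus G)\sub\tilde{W}\cap(X\setminus G)=W$. Thus every point of $W$ is a $\tau^{af}$-interior point of $W$ in $X$, so $W\in\tau^{af}|_X$. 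Conversely, for $(2)\Rightarrow(1)$, I would fix $a\in X\setminus G$ and an arbitrary $U\in\mathcal{B}_a$ and note that $U\cap(X\setminus G)$ is $\tau$-open in $X\setminus G$; by (2) it is $\tau^{af}$-open in $X$. Since $a\in U\cap(X\setminus G)$, there is an affine interval $I\ni a$ with $I\cap X\sub U\cap(X\setminus G)\sub U$, and $I\cap X\in\mathcal{B}_a^{af}$ then witnesses $\mathcal{B}_a\preceq\mathcal{B}_a^{af}$. As $U$ was arbitrary, (1) holds at $a$.

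I do not expect a genuine obstacle here: the content is careful bookkeeping between the subspace topologies $\tau|_{X\setminus G}$ and $\tau^{af}|_X$. The only point that requires attention is the use of finiteness of $G$ in the $(1)\Rightarrow(2)$ direction, where one must ensure that the affine interval extracted from hypothesis (1) can simultaneously be chosen to miss $G$; this is exactly what allows the conclusion to be phrased as openness in $X$ (rather than merely in $X\setminus G$), and it is precisely why a finite exceptional set is the natural setting for the statement.
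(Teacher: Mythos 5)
Your proposal is correct and follows essentially the same route as the paper: both directions are the same pointwise unwinding of the subspace topologies and of the relation $\mathcal{B}_{a}^{\tau}\preceq\mathcal{B}_{a}^{af}$. The only cosmetic difference is that in $(1)\Rightarrow(2)$ the paper uses that $X\setminus G$ is $\tau$-open (so the set in question is already $\tau$-open in $X$) where you instead shrink the affine interval to miss the finite set $G$; both devices work.
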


\begin{proof}
$(1)\Rightarrow(2)$: Take $U'\in\tau|_{X\setminus G}$. That is,
there exists $U\in\tau$ such that $U'=U\cap\left(X\setminus G\right)$.
Since $\left(X\setminus G\right)\in\tau$, then $U'\in\tau$. So for
every $a\in U'$, there is a basic neighborhood $W_{a}\in\mathcal{B}_{a}^{\tau}$
such that $W_{a}\subseteq U'$. By (1), there is $V_{a}\in\mathcal{B}_{a}^{af}$
such that $V_{a}\subseteq W_{a}\subseteq U'\subseteq X$. Therefore,
$U'=\bigcup_{a\in U'}V_{a}$ is $\tau^{af}$-open in $X$, and hence
$U'\in\tau^{af}|_{X}$.

$(2)\Rightarrow(1)$: Fix $a\in X\setminus G$, and let $U\in\mathcal{B}_{a}^{\tau}$.
Since $G$ is $\tau$-closed then $U\setminus G\subseteq X\setminus G$
is $\tau$-open, that is, $U\setminus G\in\tau|_{X\setminus G}$.
By (2), we have $U\setminus G\in\tau^{af}|_{X}$. Because $a\in U\setminus G$,
there is a basic neighborhood $V\in\mathcal{B}_{a}^{af}$ such that
$V\subseteq\left(U\setminus G\right)\subseteq U$. Thus, $\mathcal{B}_{a}^{\tau}\preceq\mathcal{B}_{a}^{af}$.
\end{proof}
We proceed with some general lemmas and a theorem.
\begin{lem}
\label{lem:tau-open if and only if  <-open} Assume that $X$ is a subset of $M$ and that there is a finite set $G\subseteq X$ such that
on $X\setminus G$, every $\tau$-open set is $\tau^{af}$-open in
$X$.

Then there exists a definable set $X'\subseteq M$ and a definable
topology $\tau'$ on $X'$ such that $\left(X,\tau\right)$ is definably
homeomorphic to $\left(X',\tau'\right)$, and on each open-interval
$I'\subseteq X'$, a subset of $I'$ is $\tau'$-open if and only if  it is $\tau^{af}$-open.
\end{lem}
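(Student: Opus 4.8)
The plan is to isolate, among the finitely many points of $X$ where $\tau$ fails to coincide with the affine topology, exactly those points that cause trouble and to ``spread them apart'' by a definable homeomorphism so that no open-interval of the new space $X'$ straddles such a point. Let me first identify the bad points precisely. By hypothesis there is a finite $G\subseteq X$ such that $\tau|_{X\setminus G}\subseteq\tau^{af}|_X$, i.e.\ by Lemma~\ref{lem:almost T^M|_X}, for every $a\in X\setminus G$ we have $\mathcal{B}_a^\tau\preceq\mathcal{B}_a^{af}$. The points where $\tau$ and $\tau^{af}$ genuinely disagree are those $a$ with $\mathcal{B}_a\nsim\mathcal{B}_a^{af}$; all of these lie in $G$ together with possibly finitely many additional points, but in any case the set of such points is contained in a finite set by an argument parallel to Theorem~\ref{Let--and-finite}. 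Call this finite set $D$ (the ``defect'' set). On $X\setminus D$, by Lemma~\ref{lem:GeneralProperty2 of Y_a}, $\mathcal{B}_a\sim\mathcal{B}_a^{af}$, so a subset of $X\setminus D$ is $\tau$-open iff it is $\tau^{af}$-open.

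**Constructing the separated space.** The key construction is to produce $X'$ from $X$ by ``doubling'' or ``isolating'' each bad point so that every bad point becomes an endpoint of the open-intervals adjacent to it, rather than an interior point. Concretely, write $X=(s_1,t_1)\sqcup\cdots\sqcup(s_l,t_l)\sqcup F$ as in the standing assumptions. For each bad point $a\in D$ lying in the interior of some interval $(s_i,t_i)$, I split that interval at $a$: replace $(s_i,t_i)$ by $(s_i,a)\sqcup\{a\}\sqcup(a,t_i)$, or more carefully, by a definable bijection push the two sides apart to live in disjoint intervals of $M$ so that $a$ is an isolated endpoint on the level of the affine structure of $X'$. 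Since $D$ is finite, finitely many such splittings produce a definable $X'\subseteq M$ (after relabeling points via a definable injection into $M$, which exists because $X$ is bounded and one-dimensional). Transport $\tau$ across the resulting definable bijection $h\colon X\to X'$ to get $\tau'$; then $h$ is by construction a definable homeomorphism $(X,\tau)\to(X',\tau')$.

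**Verifying the interval condition.** It remains to check that on each open-interval $I'\subseteq X'$, a subset is $\tau'$-open iff it is $\tau^{af}$-open. The point is that after the splitting, no open-interval $I'\subseteq X'$ contains a point corresponding to a bad point of $X$ in its interior: the bad points have been moved to the boundary. Hence every $I'\subseteq X'$ consists entirely of (images of) points where $\mathcal{B}_a\sim\mathcal{B}_a^{af}$, and on such a set $\tau'$ and $\tau^{af}$ induce the same open sets. This uses Lemma~\ref{lem:GeneralProperty2 of Y_a} pointwise together with the fact that agreement of neighborhood bases at every point of an open set forces agreement of the induced topologies: if $V\subseteq I'$ is $\tau'$-open then for each $a\in V$ some $\mathcal{B}_a^{\tau'}\ni W_a\subseteq V$, and $\mathcal{B}_a^{af}\preceq\mathcal{B}_a^{\tau'}$ gives an affine-open set inside $W_a$, so $V$ is $\tau^{af}$-open; the converse direction is symmetric using $\mathcal{B}_a^{\tau'}\preceq\mathcal{B}_a^{af}$.

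**The main obstacle.** The delicate step is the construction of $X'$ and the verification that the defect set $D$ is genuinely finite and that the splitting can be done definably while preserving the homeomorphism type. I expect the real work to lie in showing that separating the bad points does not create \emph{new} disagreements between $\tau'$ and $\tau^{af}$ at the newly-created endpoints, and in confirming that the relabeling of finitely many split intervals into disjoint intervals of $M$ can be arranged by a single $\emptyset$-definable (or $D$-definable) bijection. Finiteness of $D$ follows from combining Corollary~\ref{cor: a locally iso=00005Cleft=00005Cright} with the Hausdorff pigeonhole argument already used for Theorem~\ref{Let--and-finite}, since a point with $\mathcal{B}_a\nsim\mathcal{B}_a^{af}$ is never locally Euclidean and hence (away from $G$) forces a generalized-ray-type obstruction that can only occur finitely often.
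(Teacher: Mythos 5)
Your proposal is correct and takes essentially the same route as the paper: apply Theorem \ref{Let--and-finite} to $X\setminus G$ to see that the set of points with $\mathcal{B}_a\nsim\mathcal{B}_a^{af}$ is finite, then definably separate the intervals at these points and transport $\tau$, so that every open interval of $X'$ consists only of points where the two neighborhood bases are equivalent. The only (cosmetic) difference is that the paper relocates the entire finite set $F\cup G\cup A$ to affine-isolated points via a cell decomposition compatible with it, which also disposes of the worry you raise about newly created endpoints, since such points never lie in the interior of an open interval contained in $X'$.
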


\begin{proof}
Denote $X=\left(s_{1},t_{1}\right)\sqcup\ldots\sqcup\left(s_{l},t_{l}\right)\sqcup F$
where $F$ is finite and $s_{i},t_{i}\in M$. Since on $X\setminus G$
every $\tau$-open set is $\tau^{af}$-open in $X$, by applying Theorem
\ref{Let--and-finite} to $\left(X\setminus G,\tau|_{X\setminus G}\right)$
we get that the set
\[
A:=\left\{ a\in X\setminus G:\mathcal{B}_{a}\nsim\mathcal{B}_{a}^{af}\right\} =\left\{ a\in X\setminus G:\mathcal{B}_{a}\precneqq\mathcal{B}_{a}^{af}\right\}
\]
is finite.

Denote $H:=F\cup G\cup A$, and fix the obvious cell decomposition
of $M$ compatible with $\left\{ \left(s_{1},t_{1}\right),\ldots,\left(s_{l},t_{l}\right),H\right\} $.
Define $X'$ as follows: Leave each $1$-cell as it is, and map $H$
to a disjoint set $H'$ of $\tau^{af}$-isolated points. So $X'=\left(q_{1},r_{1}\right)\sqcup\ldots\sqcup\left(q_{k},r_{k}\right)\sqcup H'$
for a finite $k\geq l$ and a finite set of points $H':=f\left(H\right)=f\left(F\cup G\cup A\right)$.
This gives us a definable bijection $f:X\rightarrow X'$.

Define the topology on $X'$ to be the obvious topology induced by
$\tau$ and $f$, that is, $\tau':=\left\{ f\left(U\right):U\in\tau\right\} $.
Thus, $f:\left(X,\tau\right)\longrightarrow\left(X',\tau'\right)$
is by definition a definable homeomorphism. Therefore, for every subset
$U'\subseteq\left(q_{i},r_{i}\right)$, $U$' is $\tau'$-open if and only if 
$U'$ is $\tau^{af}$-open.
\end{proof}
In the process of proving Theorem \ref{thm: TFAE affine} below, we
move from a definable topology on a one-dimensional $X\subseteq M^{n}$
to a definably homeomorphic topology on $X'\subseteq M$. While some
properties are obviously invariant under definable homeomorphism,
others might depend on the embedding of $X$ in $M^{n}$. We thus
first need:
\begin{lem}
\label{lem: Kobi's} Let $X\subseteq M^{n}$ and $X'\subseteq M^{k}$
be definable one-dimensional sets. If $f:X\to X'$ is a definable
bijection, then there is a finite set $G\subseteq X$ such that for
all $a\in X\setminus G$, the family of sets
\[
f(\mathcal{B}_{a}^{af}(X))=\left\{ f(U\cap X):U\text{ is }\tau^{af}\text{-open in \ensuremath{M^{n}}},a\in U\right\}
\]
forms a basis to the neighborhoods of $f(a)$ in the affine topology
$\tau^{af}$ on $X'$.
\end{lem}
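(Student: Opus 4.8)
The plan is to prove the stronger geometric statement that a definable bijection $f$ between one-dimensional definable sets is an affine homeomorphism away from a finite set; the assertion that $f(\mathcal{B}_{a}^{af}(X))$ is a basis for the $\tau^{af}$-neighborhoods of $f(a)$ is just the reformulation of ``$f$ is a local homeomorphism at $a$'' (continuity of $f$ at $a$ gives that every affine neighborhood of $f(a)$ contains the image of one, and openness of $f$ at $a$ gives that each image $f(U\cap X)$ is itself an affine neighborhood of $f(a)$). The two ingredients are o-minimal cell decomposition, which presents each one-dimensional set as finitely many points together with $1$-cells each homeomorphic to an open interval, and the Monotonicity Theorem, which controls definable injections of one variable.

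First I would apply cell decomposition to $X\subseteq M^{n}$ and to $X'\subseteq M^{k}$, writing $X$ as a finite disjoint union of $0$-cells and $1$-cells $C_{1},\ldots,C_{p}$, and similarly $X'=C_{1}'\sqcup\cdots\sqcup C_{q}'$. The finitely many $0$-cells of $X$, together with the finitely many points of $X$ that $f$ sends into $0$-cells of $X'$ (these are finite since $f$ is injective and there are finitely many such $0$-cells), go straight into $G$; from now on I work over a single $1$-cell $C=C_{i}$ whose image under $f$ meets only $1$-cells of $X'$ (its image is infinite, so it cannot be contained in finitely many points). The crucial structural input is that each $1$-cell $C\subseteq M^{n}$ admits a coordinate projection $\pi\colon M^{n}\to M$ such that $\pi|_{C}\colon C\to I:=\pi(C)$ is an affine homeomorphism onto an open interval: by the inductive description of cells, after permuting coordinates $C$ is either a vertical open interval over a point, or the graph of a continuous definable map over a $1$-cell in $M^{n-1}$, so that projection to the interval coordinate has a continuous inverse. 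Fix such a $\pi$ for $C$ and such a $\pi'$ for each $1$-cell $C'$ of $X'$.

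Next I would linearize. For each $1$-cell $C'$ of $X'$, the set $\{t\in I : f((\pi|_{C})^{-1}(t))\in C'\}$ is a definable subset of the interval $I$, hence by o-minimality a finite union of points and open subintervals. Discarding the finitely many points into $G$ and fixing one open subinterval $I_{0}$, the composite $g:=\pi'\circ f\circ(\pi|_{C})^{-1}\colon I_{0}\to I'$ is a definable injection of a single variable between open intervals. By the Monotonicity Theorem there is a finite subset of $I_{0}$ off of which $g$ is continuous and strictly monotone, hence an affine homeomorphism onto an open subinterval of $I'$; I add the $\pi|_{C}$-preimages of these finitely many points to $G$. Since $\pi|_{C}$ and $\pi'|_{C'}$ are homeomorphisms, on the corresponding piece $f=(\pi'|_{C'})^{-1}\circ g\circ(\pi|_{C})$ is a composition of affine homeomorphisms, so $f$ carries a relatively affine-open neighborhood of each such $a\in C$ onto a relatively affine-open neighborhood of $f(a)$ in $X'$. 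Collecting contributions over the finitely many cells $C$ of $X$, the finitely many cells $C'$ of $X'$, and the finitely many exceptional points per pair, the union $G$ is finite, and for $a\in X\setminus G$ the family $f(\mathcal{B}_{a}^{af}(X))$ is a basis for the $\tau^{af}$-neighborhoods of $f(a)$, as required.

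The main obstacle I expect is bookkeeping the finiteness uniformly: one must be sure that the exceptional points produced cell-by-cell and pair-by-pair do not accumulate, which is guaranteed because cell decomposition yields only finitely many cells and each application of monotonicity to a definable one-variable injection on an interval produces only finitely many bad points. A secondary technical point is the justification that a $1$-cell projects homeomorphically to an interval under a coordinate map; this is where the inductive graph-structure of cells and the continuity of the defining functions are used, and it is precisely what reduces the whole problem to the one-variable Monotonicity Theorem.
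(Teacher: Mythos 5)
Your proposal is correct in outline and is essentially the paper's argument: the paper's own proof is the single sentence that ``by basic properties of definable functions in o-minimal structures'' there is a finite $G$ off which $f$ is an affine homeomorphism, and your cell-decomposition-plus-Monotonicity argument is the standard unpacking of exactly that sentence. One step is missing from your bookkeeping, however. After reducing to a single $1$-cell $C$ of $X$ mapped into a single $1$-cell $C'$ of $X'$, you conclude that $f$ carries relatively open neighborhoods of $a$ in $C$ onto relatively open neighborhoods of $f(a)$ in $X'$; but the sets in $f(\mathcal{B}_{a}^{af}(X))$ are images of $U\cap X$, not of $U\cap C$, and the conclusion is about a neighborhood basis in $X'$, not in $C'$. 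If $a\in cl^{af}(X\setminus C)$, then every $U\cap X$ contains points of $X\setminus C$ whose images need not stay near $f(a)$, so the family fails to be a neighborhood basis at such $a$ even though $f|_C$ is a local homeomorphism there; dually, if $f(a)\in cl^{af}(X'\setminus C')$, the image sets need not be neighborhoods of $f(a)$ in $X'$. Both defects occur at only finitely many points, since $cl^{af}(X\setminus C)\cap C$ is contained in $cl^{af}(X\setminus C)\setminus(X\setminus C)$, which is finite by o-minimality (and likewise on the $X'$ side), so adding these points and their $f$-preimages to $G$ closes the gap; with that one extra finiteness observation your argument is complete.
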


\begin{proof}
By basic properties of definable functions in o-minimal structures,
there is a finite set $G\subseteq X$ such that $f:X\setminus G\to X'\setminus f(G)$
is a definable homeomorphism, with respect to the affine topology
on both $X\setminus G$ and $X'\setminus f(G)$. The result follows.
\end{proof}

\section{The main theorems}

\begin{thm}
\label{thm: TFAE affine} Let $X\subseteq M^{n}$ be a definable bounded set, $\dim X=1$, and
let $\tau$ be a definable Hausdorff topology on $X$. Then the following
are equivalent:
\end{thm}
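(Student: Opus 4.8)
The plan is to prove the equivalences through the cycle $(2)\Rightarrow(1)\Rightarrow(3)\Rightarrow(2)$, after first using Lemma \ref{lem: Kobi's} to reduce to the situation $X\subseteq M$ described in the standing assumptions. That lemma shows each of the three conditions is invariant under a definable bijection once a finite set is discarded, so there is no loss in replacing $X\subseteq M^{n}$ by a definably homeomorphic set in $M$ whose affine topology is governed by the order topology on the line. I would also record the direct implication $(1)\Rightarrow(2)$: if $f\colon(X,\tau)\to(X',\tau^{af}|_{X'})$ is a definable homeomorphism with $X'\subseteq M^{k}$, then Lemma \ref{lem: Kobi's} produces a finite $G$ off which $f$ carries affine bases to affine bases, so off $G$ the topology $\tau$ coincides with $\tau^{af}$, giving $(2)$.

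For $(2)\Rightarrow(1)$, which I expect to be the main construction, I would first invoke Lemma \ref{lem:almost T^M|_X} to restate $(2)$ as $\mathcal{B}_{a}^{\tau}\preceq\mathcal{B}_{a}^{af}$ for all $a$ outside a finite set, i.e. $\tau$ is coarser than the affine topology off a finite set. Then I would apply Lemma \ref{lem:tau-open if and only if  <-open} (which already absorbs Theorem \ref{Let--and-finite}) to pass to a definably homeomorphic $(X',\tau')$ in which $\tau'$ agrees with $\tau^{af}$ on every open-interval of $X'$, and in which the points where $\tau'$ differs from the affine topology form a finite set $H'$ of otherwise $\tau^{af}$-isolated points. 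The topology is then determined entirely by how each special point $a\in H'$ is glued, through its neighborhoods, to the finitely many shadows in $\boldsymbol{S}(a)$; by the Remark preceding Lemma \ref{Let--point-ray} each shadow $b\neq a$ is an endpoint of a generalized ray, and by Lemma \ref{lem:C3. 2pts a s.t. b in Y_a} each such $b$ is shared by at most two special points. I would then embed into some $M^{k}$ by keeping the line coordinate and adjoining finitely many definable coordinate functions, one per gluing, each affinely continuous on the intervals and taking a designated value at the relevant special point, arranged so that the affine subspace topology on the image reattaches exactly the rays prescribed by the shadows. The hard part will be producing this assignment consistently and injectively; here the Hausdorffness of $\tau$ together with the at-most-two constraint of Lemma \ref{lem:C3. 2pts a s.t. b in Y_a} is precisely what rules out obstructions.

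For $(1)\Rightarrow(3)$, once $(X,\tau)$ is realized as a definable subset of $M^{k}$ with its affine topology, I would appeal to o-minimal cell decomposition: any definable subset splits into finitely many cells, each affinely definably connected, so it has finitely many definably connected components with respect to $\tau=\tau^{af}$. The only point needing care is to verify that, in the affine setting, the paper's notion of definable connectedness coincides with the usual o-minimal one, after which finiteness is immediate.

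For $(3)\Rightarrow(2)$, the other place I expect genuine work, I would argue by contraposition. If $(2)$ fails then, by Lemma \ref{lem:almost T^M|_X}, for every finite $G$ some $a\in X\setminus G$ has $\mathcal{B}_{a}\npreceq\mathcal{B}_{a}^{af}$, so the definable set $A=\{a\in X:\mathcal{B}_{a}\npreceq\mathcal{B}_{a}^{af}\}$ is infinite, and by Corollary \ref{cor: a locally iso=00005Cleft=00005Cright} each of its points is locally isolated or locally left/right-closed. From $A$ I would manufacture a single definable subset of $X$ with infinitely many definably connected components: the locally isolated points form an infinite totally definably disconnected set, while an infinite family of locally half-closed points lets one split an interval into infinitely many $\tau$-clopen pieces, exactly as in the behaviour of $\tau^{[\,)}$. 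The main obstacle is to exhibit such a subset uniformly and definably and to prove rigorously that it admits no decomposition into finitely many definably connected components; I expect to control this using o-minimality together with the uniform finiteness of $\boldsymbol{S}(a)$ from Lemma \ref{lem:C2. Y_a is finite }.
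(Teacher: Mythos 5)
Your proposal is correct and follows essentially the same route as the paper: reduce to $X\subseteq M$ via Lemma \ref{lem: Kobi's} and Lemma \ref{lem:tau-open if and only if  <-open}, prove the "finite $G$" condition implies affineness by an explicit embedding gluing generalized rays to the finitely many special points, get finiteness of components from o-minimality, and close the cycle by the contrapositive argument that an infinite definable set of locally isolated or locally half-closed points contains an interval that is totally definably disconnected or splits into infinitely many clopen pieces. The only difference is cosmetic: you traverse the cycle in a different order and fold the paper's conditions (3) and (4) together via Lemma \ref{lem:almost T^M|_X} rather than treating them as a separate implication.
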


\begin{enumerate}
\item \label{enu: homeomorphic affine}$\left(X,\tau\right)$ is definably
homeomorphic to a definable set with its affine topology.
\item \label{enu: subsets components}Every definable subset of $X$ has
a finite number of definably connected components, with respect to
$\tau$.
\item \label{enu: coarsens affine}For all but finitely many $x\in X$,
$\mathcal{B}_{x}\preceq\mathcal{B}_{x}^{af}$.
\item \label{enu: cofinite affine}There is a finite set $G\subseteq X$
such that on $X\setminus G$ every $\tau$-open set is $\tau^{af}$-open
in $X$.
\end{enumerate}
\begin{proof}
We observe first that if $f:(X,\tau)\longrightarrow(X',\tau')$ is
a definable homeomorphism, then for every $a\in X$, $f$ sends the
basis of $\tau$-neighborhoods $\mathcal{B}_{a}$ to a basis of $\tau'$-neighborhoods
of $f(a)\in X'$. By Lemma \ref{lem: Kobi's}, there exists a finite set
$G\subseteq X$ such that for all $a\in X\setminus G$, $f(\mathcal{B}_{a}^{af}(X))=\mathcal{B}_{f(a)}^{af}(X')$.
It follows that for all $a\in X\setminus G$,
\[
\mathcal{B}_{a}\preceq\mathcal{B}_{a}^{af}(X)\Longleftrightarrow\mathcal{B}_{f(a)}\preceq\mathcal{B}_{f(a)}^{af}(X').
\]

Thus, property (\ref{enu: coarsens affine}) holds for $X$ if and
only if it holds for $X'$. By the same lemma, (\ref{enu: cofinite affine})
holds for $X$ if and only if it holds for $X'$. Properties (\ref{enu: homeomorphic affine}),
(\ref{enu: subsets components}) are clearly invariant under definable
homeomorphisms.

The above discussion, together with Lemma \ref{lem:tau-open if and only if  <-open},
allows us to assume that $X$ is a bounded subset of $M$.

 $(1)\Rightarrow(2)$: If $\left(X,\tau\right)$ is definably homeomorphic
to a definable set with its affine topology, then by o-minimality,
every definable subset of $X$ has a finite number of definably connected
components.

$(2)\Rightarrow(3)$: Assume towards contradiction that there is an
infinite definable set of points $A\subseteq X$ such that $\mathcal{B}_{a}\mathcal{\npreceq B}_{a}^{af}$
for all $a\in A$. By Corollary \ref{cor: a locally iso=00005Cleft=00005Cright},
every $a\in A$ is either locally isolated or locally right-closed
or locally left-closed\emph{.} Notice that these properties are all
definable properties of $a$.

If there are infinitely many locally isolated points in $A$, then
the set \linebreak{}
$\left\{ a\in A:a\text{ is locally isolated}\right\} $ is a definable
infinite set, so contains an interval. Notice that for every locally
isolated point $a\in A$ and small enough $U_{a}\in\mathcal{B}_{a}$
there exists an open-interval $I\ni a$ such that $U_{a}\cap I=\left\{ a\right\} $.
Fix $a_{0}\in A$ generic over $\emptyset$ and an open-interval $I_{0}\ni a_{0}$
such that $U_{a_{0}}\cap I_{0}=\left\{ a_{0}\right\} $. Now, similarly
to the proof of Lemma \ref{lem:Lemma Y_b in Y_a}: We can fix an open-interval
$J_{3}\subseteq I_{0}\cap A$ of locally isolated points such that
for every  $a\in J_{3}$, for every small enough $U_{a}\in\mathcal{B}_{a}$,
we have $U_{a}\cap J_{3}=\left\{ a\right\} $.  Therefore, $J_{3}$
is a definable infinite set that is totally disconnected, contradicting
(2).

If there are infinitely many locally right-closed points in $A$,
then the set \linebreak{}
$\left\{ a\in A:a\text{ is locally right-closed}\right\} $ is a definable
infinite set. Similarly to the above, there exists an open-interval
$J$ such that every $a\in J$ is locally right-closed, and we can
obtain such a $J$ for which for every $a\in J$ there is $U_{a}\in\mathcal{B}_{a}$
such that $U_{a}\cap J=\left(a',a\right]$. Therefore, once again
$J$ is a definable infinite set for which the only definably connected
sets are singletons, contradicting (2). We treat similarly the remaining
case.

Notice that Lemma \ref{lem:Lemma Y_b in Y_a} is carried out in an
elementary extension $\mathcal{N}$ of $\mathcal{M}$. However, the
existence of an interval $J_{3}$ with all of these properties, is
easily seen to be a first-order property of the structure. Thus, after
possibly quantifying over parameters, we obtain the existence of such
an interval in the structure $\mathcal{M}$ in which we are working,
and obtain a contradiction in $\mathcal{M}$. Thus, we showed that
the set $A\subseteq X$ of all points $a\in X$ such that $\mathcal{B}_{a}\mathcal{\npreceq B}_{a}^{af}$
must be a finite set.

$(3)\Rightarrow(4)$: Assume that for all but finitely many $x\in X$
we have $\mathcal{B}_{x}\preceq\mathcal{B}_{x}^{af}$, and denote
$G=\left\{ x\in X:\mathcal{B}_{x}\npreceq\mathcal{B}_{x}^{af}\right\} $.
Thus, for the finite set $G\subseteq X$ we have, by Lemma \ref{lem:almost T^M|_X},
that on $X\setminus G$, every $\tau$-open set is $\tau^{af}$-open
in $X$.

$(4)\Rightarrow(1)$: We give a direct proof by showing how to embed
$\left(X,\tau\right)$ in $M^{3}$. By Lemma \ref{lem:tau-open if and only if  <-open},
we assume that $X\subseteq M$ is a finite union of disjoint open-intervals
and $\tau^{af}$-isolated points:
\[
X=\left(q_{1},r_{1}\right)\sqcup\ldots\sqcup\left(q_{k},r_{k}\right)\sqcup H,
\]
such that for every $U\subseteq\left(q_{i},r_{i}\right)$, $U$ is
$\tau$-open if and only if  $U$ is $\tau^{af}$-open. 

Because the only definable bijections at hand are those given by the underlying group operation $+$,
we need to be slightly careful in our construction of the embedding. We first identify $H=\{h_1,\ldots, h_n\}$
with a finite subset of points in $M^3$ of the form $\langle p_i,0,0\rangle $, with $p_1<p_2<\cdots<p_n$ in $M$, such that for all $i_1,i_2=1,\ldots, n$, and $j=1,\ldots, k$, 
$2k|p_{i_1}-p_{i_2}|<r_j-q_j$.

We would like to understand the $\tau$-neighborhoods of points in
$H$. Consider $h\in H$ and $U\in\mathcal{B}_{h}$. Since $\tau$
is Hausdorff, the finite set $H$ is $\tau$-closed, and thus $U\setminus H$
is a $\tau$-open set. So by our assumption, $U\setminus H$ is also
$\tau^{af}$-open. Thus, every small enough $U\in\mathcal{B}_{h}$
is of the form $U=V\sqcup\left\{ h\right\} $ for a certain $\tau^{af}$-open
$V\subseteq U\setminus H$.

If $h$ is not $\tau$-isolated, then similarly to the proof of Lemma
\ref{Let--point-ray}, up to equivalence of bases every small enough
$U\in\mathcal{B}_{h}$ is a finite union of generalized rays and the
singleton $\left\{ h\right\} $ itself:
\[
U=\left(\bigcup_{1\leq j\leq k}\left(q_{i_{j}},q_{i_{j}}+\epsilon\right)\right)\cup\left(\bigcup_{1\leq j\leq k}\left(r_{i_{j}}-\epsilon,r_{i_{j}}\right)\right)\sqcup\left\{ h\right\} .
\]

For every $i\in\left\{ 1,\ldots,k\right\} $, consider $\left(q_{i},r_{i}\right)$
and its two generalized rays. Assume first that for every $h\in H$
there exists a neighborhood $W\in\mathcal{B}_{h}$ that does not intersect
$\left(q_{i},r_{i}\right)$. In this case we identify  $\left(q_{i},r_{i}\right)$
with an  interval on the x-axis in $M^{3}$ of the same length, whose affine closure does not
intersect $H$.

Assume now that there exists $h'\in H$ such that every neighborhood
$W\in\mathcal{B}_{h^{'}}$ intersects $\left(q_{i},r_{i}\right)$.
As we pointed out above, it follows that every $W\in\mathcal{B}_{h^{'}}$
contains a generalized ray, say a left generalized ray in $\left(q_{i},r_{i}\right)$.
In this case we definably identify $\left(q_{i},r_{i}\right)$ with
a piecewise-linear curve $C_{i}$ in $M^{3}$ such that $\left(h',0,0\right)$ is the
endpoint of $C_{i}$ which corresponds to $q_{i}$. Note that since
$\tau$ is Hausdorff, if such $h'$ exists then it is unique. If there
is also $h''\in H$ such that every neighborhood $W\in\mathcal{B}_{h''}$
contains a right generalized ray of $\left(q_{i},r_{i}\right)$, then
we choose the curve $C_{i}$ such that its other endpoint is $\left(h'',0,0\right)$.
We may need to stretch, shrink or twist $C_{i}$ so it fits properly
in $M^{3}$, without intersecting any other point of $H$ and any
other image of an interval $\left(q_{j},r_{j}\right)$. All of the
above can be done definably in $\mathcal{M}$. This is possible since
both the set $H$ and the number $k$ are finite, and we chose to $p_i$'s to be sufficiently close to each other. If it happens to
be that $h'=h''$, then in $M^{3}$, both sides of $C_{i}$ will be
attached to $\left(h',0,0\right)$, closing a piecewise-linear loop. It may also happen
that we have to attach both sides of another curve $C_{j}$ to this
same $\left(h',0,0\right)$, and in this case we obtain several loops
attached to the same point $\left(h',0,0\right)$.

It is straightforward that by doing the above we get a definable embedding
$f:\left(X,\tau\right)\rightarrow\left(M^{3},\tau^{af}\right)$, which
is a definable homeomorphism when restricted to its image. Therefore,
the proof of this direction is complete.


This ends the proof of Theorem \ref{thm: TFAE affine}.
\end{proof}

\subsection{Main theorem}

Our goal is to prove Theorem \ref{thm: Condition =0000235 affine},
which yields an additional equivalent condition to the ones in Theorem
\ref{thm: TFAE affine} for when a definable topology is definably
homeomorphic to an affine one. Note that unlike condition (2) of Theorem
\ref{thm: TFAE affine}, our new condition (2) of Theorem \ref{thm: Condition =0000235 affine}
will only require $X$ itself to have finitely many definably connected
components. On the way to proving the theorem we shall gain a better
understanding of general definable one dimensional Hausdorff topologies.
\begin{thm}
\label{thm: regular+connected implies affine} Let $X\subseteq M^{n}$
be a definable bounded set, $\dim X=1$, and let $\tau$ be a definable Hausdorff, regular
topology on $X$. If $\left(X,\tau\right)$ is definably connected,
then $\left(X,\tau\right)$ is definably homeomorphic to a definable
set with its affine topology.
\end{thm}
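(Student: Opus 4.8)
By Theorem \ref{thm: TFAE affine}, it suffices to verify one of the equivalent conditions listed there; the most natural target is condition (3), namely that $\mathcal{B}_a \preceq \mathcal{B}_a^{af}$ for all but finitely many $a \in X$. By Lemma \ref{lem:GeneralProperty2 of Y_a}, this is equivalent to showing that $\boldsymbol{S}(a) = \{a\}$ for all but finitely many $a$. So the plan is to use regularity (which we have not yet exploited) to prove that a generic point $a \in X$ has $\boldsymbol{S}(a) = \{a\}$; by definability and o-minimality, the set of bad points is then definable and cannot contain an interval, hence is finite. The reductions in the proof of Theorem \ref{thm: TFAE affine} let me assume $X \subseteq M$ is a finite union of open-intervals and isolated points.

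The key step is to rule out, for generic $a$, the existence of a shadow $b \in \boldsymbol{S}(a)$ with $b \neq a$. Suppose toward contradiction that $a$ is generic over $\emptyset$ and $b \in \boldsymbol{S}(a) \setminus \{a\}$. By Lemma \ref{lem: a generic implies b generic}, $b$ is also generic, and by Lemma \ref{lem:Lemma Y_b in Y_a} we have $\boldsymbol{S}(b) \subseteq \boldsymbol{S}(a)$. The intuition is that $b$ being glued to $a$ forces every $\tau$-neighborhood of $a$ to accumulate at $b$; I want to use regularity to separate $a$ from $b$ by a $\tau$-closed barrier and derive a contradiction. Concretely, pick disjoint affine-open intervals $I \ni a$ and $J \ni b$. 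Using regularity at $a$, choose $W \in \mathcal{B}_a$ with $cl(W) \subseteq U$ for a neighborhood $U$ small enough (via Lemma \ref{lem:GeneralProperty1 of Y_a}) to be contained in $I \cup J$ near the shadows. The aim is to show that $cl(W)$ must contain points of $J$ arbitrarily close to $b$ (since $b \in \boldsymbol{S}(a)$ forces $W$ to meet every affine neighborhood of $b$), yet regularity confines $cl(W)$ inside a neighborhood we can arrange to miss $b$'s affine vicinity—the precise bookkeeping here, matching the $\tau$-closure against the affine closure $cl^{af}$, is where the real work lies.

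The main obstacle I anticipate is controlling the interaction between the $\tau$-closure and the affine closure when taking the regular shrinking. Regularity gives $cl(W) \subseteq U$ in the $\tau$-topology, but $\boldsymbol{S}(a)$ is defined via $cl^{af}$, so I must translate between the two; Lemma \ref{lem:C1. b in Y_a if and only if  a in cl(I)} and Lemma \ref{lem:newC cl(I)} are the bridge, since they characterize membership in shadows through $\tau$-closures of affine intervals. The delicate point is that near $b$ the topology $\tau$ may differ wildly from the affine one, so I cannot assume $cl$ and $cl^{af}$ agree there; I expect to need the genericity of $b$ together with the finiteness and uniform bound on $|\boldsymbol{S}(x)|$ from Lemma \ref{lem:C2. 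Y_a is finite } to run a cell-decomposition / monotonicity argument on the shadow functions $f_i$ exactly as in Lemma \ref{lem:Lemma Y_b in Y_a}, and then invoke regularity to close a clopen set and contradict definable connectedness.

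Once genericity yields $\boldsymbol{S}(a) = \{a\}$ for generic $a$, I conclude as follows. The property ``$\boldsymbol{S}(a) = \{a\}$'' is definable in $a$, so the set of points failing it is a definable subset of $X$ containing no generic point, hence (by o-minimality and $\dim X = 1$) it is finite. This is precisely condition (3) of Theorem \ref{thm: TFAE affine}, so $(X,\tau)$ is definably homeomorphic to a definable set with its affine topology. I note that definable connectedness, rather than merely ``finitely many components,'' should enter precisely at the step where I produce a proper nonempty $\tau$-clopen set from a separating barrier around a putative nontrivial shadow, giving the desired contradiction.
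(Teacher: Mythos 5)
There is a genuine gap, and it starts at the very first reduction. You claim that condition (3) of Theorem \ref{thm: TFAE affine}, namely $\mathcal{B}_a \preceq \mathcal{B}_a^{af}$ for all but finitely many $a$, is equivalent via Lemma \ref{lem:GeneralProperty2 of Y_a} to $\boldsymbol{S}(a)=\{a\}$ for all but finitely many $a$. But that lemma characterizes the \emph{opposite} inequality: $\boldsymbol{S}(a)=\{a\}$ iff $\mathcal{B}_a^{af}\preceq\mathcal{B}_a$, i.e.\ iff $\mathcal{B}_a$ is equivalent to one of $\mathcal{B}_a^{iso}$, $\mathcal{B}_a^{\left(\,\,\right]}$, $\mathcal{B}_a^{\left[\,\,\right)}$, $\mathcal{B}_a^{af}$. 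The left-closed topology $\tau^{\left[\,\,\right)}$ on an interval shows the reduction fails: there every point has $\boldsymbol{S}(a)=\{a\}$, yet the space is not definably homeomorphic to an affine set. So even a complete proof that generic points have trivial shadow sets would not finish the argument; you must separately rule out that generically $\mathcal{B}_a\sim\mathcal{B}_a^{\left[\,\,\right)}$ or $\mathcal{B}_a\sim\mathcal{B}_a^{\left(\,\,\right]}$. This is exactly Claim 2 of the paper's proof, which uses regularity to show that for such a point $cl\left(\left[x_0,y_0\right)\right)=\left[x_0,y_0\right)$, producing a clopen set and contradicting definable connectedness. Your proposal never addresses this case.

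The second problem is with your ``key step.'' Regularity alone cannot force $\boldsymbol{S}(a)=\{a\}$ at generic points: the lexicographic order topology on $\left(0,1\right)\times\{1,2\}$ is an order topology, hence regular and Hausdorff, and every point has a nontrivial shadow. So the separation-by-a-$\tau$-closed-barrier mechanism you sketch in your second paragraph cannot succeed on its own; definable connectedness must do the work, by exhibiting an explicit proper clopen set. You acknowledge this at the end, but the construction of that clopen set is the substance of the proof and is left entirely open in your proposal: one needs to show $|\boldsymbol{S}(a)|=2$ with $\boldsymbol{S}(a)=\boldsymbol{S}(b)$ (Claim 3), analyze which side of each point each shadow ``inhabits,'' prove the shadow function $f_2$ is strictly increasing (Claim 4), and then compute $cl\left(\left[a,a''\right)\cup\left(f_2(a),f_2(a'')\right]\right)$ via Lemma \ref{lem:newC cl(I)} to verify it is clopen (Claim 5). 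In the paper, regularity is consumed entirely in Claims 1 and 2 (finitely many locally isolated points, finitely many half-open points), and the remainder runs on connectedness alone; your outline inverts this division of labor and, as written, would not close.
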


\begin{proof}
As before, we may assume that $X$ is a subset of $M$ of
the form $X=\left(s_{1},t_{1}\right)\sqcup\ldots\sqcup\left(s_{l},t_{l}\right)\sqcup F$
with $F$ finite. We first prove this theorem in our sufficiently
saturated elementary extension $\mathcal{N}$ of $\mathcal{M}$, and
afterwards we explain why it holds also for our original $\mathcal{M}$.
Note that the Hausdorffness and regularity of $X$ can be formulated
in a first-order way, thus $\left(X\left(N\right),\tau\left(N\right)\right)$
is also Hausdorff and regular.
Let us see that $X\left(N\right)$ is definably connected: Assume
towards contradiction that $X\left(N\right)$ is not definably connected.
Let $Z=\varphi\left(N,\bar{c}\right)$ be a definable non-trivial
clopen subset of $X\left(N\right)$. It is easy to see that there
is a formula $\psi\left(\bar{y}\right)$, $|\bar{y}|=|\bar{c}|$,
such that every element $\bar{c}'\in N$ satisfies $\psi$ if and only if  $\varphi\left(N,\bar{c}'\right)$
is a non-trivial clopen subset of $X\left(N\right)$. Since $\mathcal{N}\vDash\exists\bar{y\:}\psi\negthinspace\left(\bar{y}\right)$,
also $\mathcal{M}\vDash\exists\bar{y}\:\psi\negthinspace\left(\bar{y}\right)$.
So for some $\bar{d}\subseteq  M$, $\varphi\left(M,\bar{d}\right)$ is a
non-trivial clopen subset of $X\left(M\right)$, and this is a contradiction.
Therefore, $X\left(N\right)$ must be definably connected.

\smallskip{}

We begin with a claim: \\
\\
\textbf{Claim 1.} There are at most finitely many locally isolated
points in $X$. $\,\,\,\,\,\,\,\,\,\,\,\,\,\,\,\,\,\,\,\,\,\,$\smallskip{}
\emph{Proof. }Assume towards contradiction that there is $a\in X$
generic over $\emptyset$, which is locally isolated. Let $U\in\mathcal{B}_{a}$
and $I\ni a$ be an open-interval such that $U\cap I=\left\{ a\right\} $.
As in the proof of Lemma \ref{lem:Lemma Y_b in Y_a}, we may assume
that there are definable continuous strictly monotone functions $f_{1},\ldots,f_{r}:I\rightarrow X$,
such that for all $x\in I$, $\boldsymbol{S}\left(x\right)=\left\{ f_{1}\left(x\right),\ldots,f_{r}\left(x\right)\right\} $
with $f_{1}\left(x\right)=x$, and $f_{i}\left(I\right)\cap f_{j}\left(I\right)=\emptyset$
for $1\leq i\neq j\leq r$.

Since we assume that $\left(X,\tau\right)$ is definably connected,
$a$ cannot be isolated. Thus, by Lemma \ref{lem: a locally isolated implies Y_a > =00007Ba=00007D}
we conclude that $\boldsymbol{S}\left(a\right)\supsetneqq\left\{ a\right\} $,
hence $r\geq2$. Let $b=f_{2}\left(a\right)\in\boldsymbol{S}\left(a\right)$
for $f_{2}$ as in Lemma \ref{lem:Lemma Y_b in Y_a}.

We show that there is no $W\in\mathcal{B}_{a}$ such that $cl\left(W\right)\subseteq U$:
Because $\boldsymbol{S}\left(a\right)\cap f_{2}\left(I\right)=\left\{ b\right\} $,
for every $W\in\mathcal{B}_{a}$ there must be some interval of the
form $\left(b',b\right)$ or $\left(b,b''\right)$ that is contained
in $W\cap f_{2}\left(I\right)$. Without loss of generality, $\left(b',b\right)\subseteq f_{2}\left(I\right)$.
By Lemma \ref{lem:newC cl(I)},
\[
cl\left(\left(b',b\right)\right)\supseteq\left\{ x\in X:\boldsymbol{S}\left(x\right)\cap\left(b',b\right)\neq\emptyset\right\} .
\]
By the definition of $f_{2}$, we also have
\[
\left\{ x\in X:\boldsymbol{S}\left(x\right)\cap\left(b',b\right)\neq\emptyset\right\} \supseteq f_{2}^{-1}\left(\left(b',b\right)\right).
\]

It follows that $cl\left(\left(b',b\right)\right)$ contains an infinite
subset of $I$, but $U\cap I=\left\{ a\right\} $, so $cl\left(W\right)$
cannot be contained in $U$ for $W\in\mathcal{B}_{a}$. That is, $\tau$
is not regular, and this is a contradiction. $\boxempty$ \textbf{}\\
\textbf{}\\
\textbf{Claim 2.} There are at most finitely many $x\in X$ such that
$\mathcal{B}_{x}\sim\mathcal{B}_{x}^{^{\left[\,\,\right)}}$ or $\mathcal{B}_{x}\sim\mathcal{B}_{x}^{^{\left(\,\,\right]}}$.\smallskip{}
\emph{Proof.} Assume towards contradiction that $J\subseteq X$ is
an open-interval such that for every $x\in J$, $\mathcal{B}_{x}\sim\mathcal{B}_{x}^{^{\left[\,\,\right)}}$
(without loss of generality). Thus we can assume that for every $x\in J$,
we have $\mathcal{B}_{x}=\mathcal{B}_{x}^{^{\left[\,\,\right)}}$.
Notice that although $J$ is an open set and each interval $\left[c,d\right)\subseteq J$
is open as well, we can not conclude immediately that $\left[c,d\right)$
is also closed, because we do not know that $X\setminus\left[c,d\right)$
is open. For this, we must use the regularity of $\tau$.

For every $x_{0}\in J$ generic over $\emptyset$, let $U:=\left[x_{0},z_{0}\right)\in\mathcal{B}_{x_{0}}$
be such that $U\subseteq J$. By the regularity of $\tau$ there exists
$W=\left[x_{0},y_{0}\right)\in\mathcal{B}_{x_{0}}$ such that $cl\left(W\right)\subseteq U$.
Note that since $cl\left(W\right)\setminus W\subseteq U\subseteq J$
and for every $x\in J$ we have $\mathcal{B}_{x}=\mathcal{B}_{x}^{^{\left[\,\,\right)}}$,
we must have $cl\left(W\right)=W=\left[x_{0},y_{0}\right)$ (because
every $a\in U\setminus\left[x_{0},y_{0}\right)$ has an open neighborhood
disjoint from $\left[x_{0},y_{0}\right)$ ). Therefore, $cl\left(W\right)$
is also open in $\tau$, and hence it is clopen. This is a contradiction
to $\left(X,\tau\right)$ being definably connected. $\boxempty$
\\

We proceed with our proof of Theorem \ref{thm: regular+connected implies affine}.
We assume that $\left(X,\tau\right)$ is not definably homeomorphic
to any definable set with its affine topology, and we show that $X$
contains a definable clopen set. In fact, given Claim 1 and Claim
2 we shall not make further use of regularity. \\

Using   Theorem \ref{thm: TFAE affine} (3), there is a point $a\in X$ generic
over $\emptyset$ such that $\mathcal{B}_{a}\mathcal{\npreceq B}_{a}^{af}$.
By Corollary \ref{cor: a locally iso=00005Cleft=00005Cright}, $a$
is locally isolated or locally right-closed or locally left-closed\emph{.}
By Claim 1, $a$ is locally right-closed or left-closed\emph{.}

If $\boldsymbol{S}\left(a\right)=\left\{ a\right\} $, then we must
have either $\mathcal{B}_{a}\sim\mathcal{B}_{a}^{^{\left[\,\,\right)}}$
or $\mathcal{B}_{a}\sim\mathcal{B}_{a}^{^{\left(\,\,\right]}}$ by
Lemma~\ref{lem:GeneralProperty2 of Y_a}. Both cases are not possible
due to Claim~2. Thus, we assume from now on that for any generic
$x\in X$ such that $\mathcal{B}_{x}\mathcal{\npreceq B}_{x}^{af}$,
the set $\boldsymbol{S}\left(x\right)$ properly contains $\left\{ x\right\} $.
\\
\\
\textbf{Claim 3. }$|\boldsymbol{S}\left(a\right)|=2$, and if $b\in\boldsymbol{S}\left(a\right)$
then $\boldsymbol{S}\left(a\right)=\boldsymbol{S}\left(b\right)=\left\{ a,b\right\} $.$\,\,\,\,\,\,\,\,\,\,\,\,\,\,\,\,\,\,\,\,\,\,\,\,\,\,\,\,\,\,\,\,\,\,\,\,\,\,\,\,\,\,\,\,\,\,\,\,\,\,$\smallskip{}
\emph{Proof. }Since $\boldsymbol{S}\left(a\right)$ properly contains
$\left\{ a\right\} $, we have $|\boldsymbol{S}\left(a\right)|\geq2$.
Let $b\in\boldsymbol{S}\left(a\right)$, $b\neq a$. Note that since
$a$ is generic over $\emptyset$ then by Lemma \ref{lem: a generic implies b generic}
so is $b$. By Lemma \ref{lem: (*)}, $\mathcal{B}_{b}\mathcal{\npreceq B}_{b}^{af}$,
so $\boldsymbol{S}\left(b\right)\supsetneqq\left\{ b\right\} $. Since
$b$ is generic, it follows from Lemma \ref{lem:Lemma Y_b in Y_a}
that $\boldsymbol{S}\left(b\right)\subseteq\boldsymbol{S}\left(a\right)$.

Assume towards contradiction that $\boldsymbol{S}\left(b\right)\ne\left\{ a,b\right\} $.
Hence, there is $c\in\boldsymbol{S}\left(b\right)$ (so also in $\boldsymbol{S}\left(a\right)\,$),
$c\neq a,b\,$. By Lemma \ref{lem: a generic implies b generic},
$c$ is generic over $\emptyset$, so by Lemma \ref{lem:C3. 2pts a s.t. b in Y_a}
it must also be locally isolated, contradicting Claim 1. Therefore,
it must be that $\boldsymbol{S}\left(b\right)=\left\{ a,b\right\} $.

By replacing $a$ and $b$ in the above, we also get $\boldsymbol{S}\left(a\right)=\left\{ a,b\right\} $.~$\boxempty$
 \\

We say that a point $x\in X$ \emph{inhabits the left side} of a point
$y\in X$ if for every $U\in\mathcal{B}_{x}$ there exists $y'\in X$,
$y'<y$, such that $\left(y',y\right)\subseteq U$. We say that $x$
\emph{inhabits the right side} of $y\in X$ if for every $U\in\mathcal{B}_{x}$
there exists $y''\in X$, $y<y''$, such that $\left(y,y''\right)\subseteq U$.

We note several easy observations for $a$ that is generic over $\emptyset$,
not locally isolated, such that $\mathcal{B}_{a}\mathcal{\npreceq B}_{a}^{af}$:

(1) If $a$ inhabits the left side or the right side of $b$ then
$b\in\boldsymbol{S}\left(a\right)$.

(2) Conversely, if $b\in\boldsymbol{S}\left(a\right)$ then $a$ inhabits
the left side or the right side (or~both)

$\,\,\,\,\,\,\,\,\,\,$of $b$. (For the case $b=a$ we use here the
fact that $a$ is not locally isolated).

(3) $a$ cannot inhabit both sides of $b$. Indeed, since $b$ is
generic over $\emptyset$ then it is

$\,\,\,\,\,\,\,\,\,\,$not locally isolated by Claim 1, and since
$\tau$ is Hausdorff it must be possible

$\,\,\,\,\,\,\,\,\,\,$to separate between $a$ and $b$.

(4) $a$ inhabits the left side (the right side) of $b$ if and only if  $b\in\boldsymbol{S}\left(a\right)$
and $b$ is locally left-closed (locally right-closed). \\

By Claim 3, $\boldsymbol{S}\left(a\right)=\left\{ a,b\right\} =\boldsymbol{S}\left(b\right)$
for $b\neq a$, and from its proof we deduce that $a$ inhabits exactly
one side of $a$ and exactly one side of~$b$, and so does $b$.

As we have seen before, we can find an interval $J\ni a$ and definable
continuous and strictly monotone functions $f_{1},f_{2}:J\rightarrow X$
with $f_{1}\left(J\right)\cap f_{2}\left(J\right)=\emptyset$, such
that for every $x\in J$, $\boldsymbol{S}\left(x\right)=\left\{ f_{1}\left(x\right)=x,f_{2}\left(x\right)\right\} $.
Moreover, the genericity of $a$ also implies that we may choose $J$
such that all $x\in J$ are ``of the same form'' as $a$. Namely,
\\
\\
(i) Every $x\in J$ is locally left-closed or every $x\in J$ is locally
right-closed.$\,\,\,\,\,\,\,\,\,\,$\smallskip{}
(ii) Every $y\in f_{2}\left(J\right)$ is locally left-closed or every
$y\in f_{2}\left(J\right)$ is locally right-closed.\\

Without loss of generality, assume that every $x\in J$ is locally
left-closed and every $y\in f_{2}\left(J\right)$ is locally right-closed
(the other cases are treated similarly). By (4), $x$ inhabits the
right side of $y:=f_{2}\left(x\right)$ and $y$ inhabits the left
side of $x$. \\
\\
\textbf{Claim 4.} Under these assumptions, $f_{2}$ is strictly increasing.$\,\,\,\,\,\,\,\,\,\,\,\,\,\,\,\,\,\,\,\,\,\,\,\,\,\,\,\,\,\,\,\,\,\,\,\,\,\,\,\,\,\,\,\,\,\,\,\,\,$\smallskip{}
\emph{Proof.} Assume towards contradiction that $f_{2}$ is strictly
decreasing. That is, for every $c,d\in J$, if $c<d$ then $f_{2}\left(c\right)>f_{2}\left(d\right)$.
Fix $c\in J$ generic over $\emptyset$. By our assumption, $c$ is
locally left-closed, that is, for every small enough $U\in\mathcal{B}_{c}$
there exists an open-interval $I_{U}\ni c$ and a point $x>c$ such
that $U\cap I_{U}=\left[c,x\right)$.

By our assumption, $f_{2}\left(c\right)$ is locally right-closed.
So $f_{2}$ being strictly decreasing implies that for every $W\in\mathcal{B}_{f_{2}\left(c\right)}$
and $x>c$, we have
\[
W\cap f_{2}\left(\left[c,x\right)\right)=W\cap\left(f_{2}\left(x\right),f_{2}\left(c\right)\right]\neq\emptyset.
\]

Note that since $f_{2}$ is strictly decreasing and $f_{2}\left(c\right)\in\boldsymbol{S}\left(c\right)$
, there must be $y'<f_{2}\left(c\right)$ such that $\left(y',f_{2}\left(c\right)\right]\subseteq U$.
Thus, for every $U\in\mathcal{B}_{c}$, we must have $W\cap U\supseteq W\cap\left(y',f_{2}\left(c\right)\right]\neq\emptyset$.
This contradicts the fact that $\tau$ is Hausdorff, and therefore
$f_{2}$ must be strictly increasing.~$\boxempty$  \\

Recall that for every $x\in J$ we have $\boldsymbol{S}\left(x\right)=\left\{ x,y\right\} =\boldsymbol{S}\left(y\right)$,
and as we just showed $f_{2}$ is strictly increasing. By Lemma \ref{lem:GeneralProperty1 of Y_a},
we know that for every small enough $U\in\mathcal{B}_{x}$, $U\subseteq\left(J\cup f_{2}\left(J\right)\right)$,
and for every small enough $W\in\mathcal{B}_{y}$, $W\subseteq\left(J\cup f_{2}\left(J\right)\right)$.
Therefore, under our assumptions we get that for every $x\in J$,
\[
\mathcal{B}_{x}\sim\left\{ \left[x,x''\right)\cup\left(y,y''\right):x''\in J,x<x'',y''=f_{2}\left(x''\right)\right\} ,
\]
 and for every $y\in f_{2}\left(J\right)$,
\[
\mathcal{B}_{y}\sim\left\{ \left(x',x\right)\cup\left(y',y\right]:x\in J,y'<y,x'=f_{2}^{-1}\left(y'\right)\right\} .
\]
 So by replacing the bases we can assume that
\[
\left(\ast\right)\begin{cases}
\mathcal{B}_{x}=\left\{ \left[x,x''\right)\cup\left(y,y''\right):x''\in J,x<x'',y''=f_{2}\left(x''\right)\right\} \\
\mathcal{B}_{y}=\left\{ \left(x',x\right)\cup\left(y',y\right]:y'\in f_{2}\left(J\right),y'<y,x'=f_{2}^{-1}\left(y'\right)\right\} .
\end{cases}
\]

For every $x\in J$ and $y=f_{2}\left(x\right)\in f_{2}\left(J\right)$,
we consider the definable families:
\[
\mathcal{B}_{x}^{f}:=\left\{ U\cap f_{2}\left(J\right):U\in\mathcal{B}_{x}\right\} \text{ and\,\, }\mathcal{B}_{y}^{f^{-1}}:=\left\{ W\cap J:W\in\mathcal{B}_{y}\right\} .
\]
Thus we have: \\
\\
(iii) $\mathcal{B}_{x}^{f}=\left\{ \left(y,y''\right):y''\in f_{2}\left(J\right),y<y''\right\} $.
$\,\,\,\,\,\,\,\,\,\,\,\,\,\,\,\,\,\,\,\,\,\,\,\,\,\,\,\,\,\,\,\,\,\,\,\,\,\,\,\,\,\,\,\,\,\,\,\,\,\,\,\,\,\,\,\,\,\,\,\,\,\,\,\,\,\,\,\,\,\,\,\,\,\,\,\,\,\,\,\,\,\,\,\,\,\,\,\,\,\,\,\,\,\,\,\,\,\,\,\,\,$\smallskip{}
(iv) $\mathcal{B}_{y}^{f^{-1}}=\left\{ \left(x',x\right):x'\in J,x'<x\right\} $.
\\

We are now ready to prove that $X$ is not definably connected. \\
\textbf{}\\
\textbf{Claim 5.} For $a''\in J$ with $a''>a$, the set $Z:=\left[a,a''\right)\cup\left(f_{2}\left(a\right),f_{2}\left(a''\right)\right]$
is clopen.$\,\,\,\,\,\,\,\,\,\,\,\,\,\,\,\,\,\,\,\,\,\,\,\,\,\,\,\,\,\,\,\,\,\,\,\,\,\,\,\,\,\,\,\,\,\,\,\,\,\,\,\,\,\,\,\,\,\,\,\,\,\,\,\,\,\,\,\,\,\,\,\,\,\,\,\,\,\,\,\,\,\,\,\,\,\,\,\,\,\,\,\,\,\,\,\,\,\,\,\,\,\,\,\,\,\,\,\,\,\,\,\,\,\,\,\,\,\,\,\,\,\,\,\,\,\,\,\,\,\,\,\,\,\,\,\,\,\,\,\,\,\,\,\,\,\,\,\,\,\,\,\,\,\,\,\,\,\,\,\,\,\,\,\,\,\,\,\,\,\,\,\,\,\,\,\,\,\,\,\,\,\,\,\,\,\,\,\,\,\,\,\,\,\,\,\,\,\,\,\,\,\,\,\,\,\,\,\,\,\,\,\,\,\,\,\,\,\,\,\,\,\,\,\,\,\,\,\,\,\,\,$\smallskip{}
\emph{Proof.} By $\left(\ast\right)$, $Z$ is open as the union
of two basic open sets. We explain why $Z$ is also closed: By general
properties of closure and since each singleton is closed, we have
\[
cl\left(Z\right)=\left\{ a\right\} \cup cl\left(\left(a,a''\right)\right)\cup cl\left(\left(f_{2}\left(a\right),f_{2}\left(a''\right)\right)\right)\cup\left\{ f_{2}\left(a''\right)\right\} .
\]
Thus, by Lemma \ref{lem:newC cl(I)} we deduce that
\begin{align*}
\left\{ a\right\} \cup\left\{ x\in\negthinspace X\negthinspace:\boldsymbol{S}\negthinspace\left(x\right)\cap\left(a,a''\right)\ne\emptyset\right\} \cup\left\{ x\in\negthinspace X\negthinspace:\boldsymbol{S}\negthinspace\left(x\right)\cap\left(f_{2}\left(a\right)\negthinspace,f_{2}\left(a''\right)\right)\ne\emptyset\right\} \cup\left\{ f_{2}\left(a''\right)\right\} \\
\subseteq cl\left(Z\right)\subseteq\,\,\,\,\,\,\,\,\,\,\,\,\,\,\,\,\,\,\,\,\,\,\,\,\,\,\,\,\,\,\,\,\,\,\,\,\,\,\,\,\,\,\,\,\,\,\,\,\,\,\,\,\,\,\,\,\,\,\,\,\,\,\,\,\,\,\,\,\,\,\,\,\,\,\,\,\,\,\,\,\,\,\,\,\,\,\,\,\,\,\,\,\,\,\,\\
\left\{ a\right\} \cup\left\{ x\in\negthinspace X\negthinspace:\boldsymbol{S}\negthinspace\left(x\right)\cap\left[a,a''\right]\ne\emptyset\right\} \cup\left\{ x\in\negthinspace X\negthinspace:\boldsymbol{S}\negthinspace\left(x\right)\cap\left[f_{2}\left(a\right)\negthinspace,f_{2}\left(a''\right)\right]\ne\emptyset\right\} \cup\left\{ f_{2}\left(a''\right)\right\} \negthinspace.
\end{align*}

The difference between the right hand side and the left hand side
is $\left\{ a'',f_{2}\left(a\right)\right\} $. Let us show that these
two points are not in $cl\left(Z\right)$: For $a''$, we know that
for $a'''\in J$ with $a'''>a''$, the set $\left[a'',a'''\right)\cup\left(f_{2}\left(a''\right),f_{2}\left(a'''\right)\right]$
is an open neighborhood of $a''$ which does not intersect $Z$. Thus,
$a''\notin cl\left(Z\right)$. Similarly, the point $f_{2}\left(a\right)$
has an open neighborhood of the form $\left[a',a\right)\cup\left(f_{2}\left(a'\right),f_{2}\left(a\right)\right]$,
which does not intersect $Z$. Thus, we have $f_{2}\left(a\right)\notin cl\left(Z\right)$.

Notice that $\boldsymbol{S}\left(x\right)\cap\left(a,a''\right)\ne\emptyset$
if and only if  $x\in\left(a,a''\right)$, and $\boldsymbol{S}\left(x\right)\cap\left(f_{2}\left(a\right),f_{2}\left(a''\right)\right)\ne\emptyset$
if and only if  $f_{2}\left(x\right)\in\left(f_{2}\left(a\right),f_{2}\left(a''\right)\right)$.
Therefore, we conclude that
\[
cl\left(Z\right)=\left\{ a\right\} \cup\left(a,a''\right)\cup\left(f_{2}\left(a\right),f_{2}\left(a''\right)\right)\cup\left\{ f_{2}\left(a''\right)\right\} =\left[a,a''\right)\cup\left(f_{2}\left(a\right),f_{2}\left(a''\right)\right]=Z,
\]
 hence we proved that $Z$ is clopen.~$\boxempty$ \\

By Claim 5, $X$ contains the definable clopen set $Z$. That is,
$\left(X,\tau\right)$ is not definably connected. Hence, we proved
Theorem \ref{thm: regular+connected implies affine} for our sufficiently
saturated $\mathcal{N}$. \\
\\
\textbf{Claim 6.} Theorem \ref{thm: regular+connected implies affine}
holds also in $\mathcal{M}$. $\,\,\,\,\,\,\,\,\,\,\,\,\,\,\,\,\,\,\,\,\,\,\,\,\,\,\,\,\,\,\,\,\,\,\,\,\,\,\,\,\,\,\,\,\,\,\,\,\,\,\,\,\,\,\,\,\,\,\,\,\,\,\,\,\,\,\,\,\,\,\,\,\,\,\,\,\,\,\,\,\,\,\,\,\,\,\,\,\,\,\,\,\,\,\,\,\,\,\,\,\,\,\,\,\,\,\,\,\,\,\,\,\,$\smallskip{}
\emph{Proof.} At the beginning of the proof of Theorem \ref{thm: regular+connected implies affine}
we saw that if $\left(X\left(M\right),\tau\left(M\right)\right)$
is Hausdorff, regular and definably connected, then so is $\left(X\left(N\right),\tau\left(N\right)\right)$.
Therefore, if $\mathcal{M}$ satisfies the conditions of Theorem \ref{thm: regular+connected implies affine},
then $\mathcal{N}$ satisfies them as well.

In this case, we get from the theorem that there exist a definable
bijection $f_{\bar{c}}:X\left(N\right)\rightarrow S_{\bar{c}}$ where
$S_{\bar{c}}\subseteq N^{k}$ for some $k$, such that $f_{\bar{c}}\,,S_{\bar{c}}$
are definable over parameters $\bar{c}\sub N$, and $f_{\bar{c}}$
is a homeomorphism of $\left(X\left(N\right),\tau\left(N\right)\right)$
and $S_{\bar{c}}$ with the affine topology. We can now write a formula
$\psi\left(\bar{y}\right)$, $|\bar{y}|=|\bar{c}|$, such that for
every $\bar{c}'\in N$,
\begin{align*}
\mathcal{N} & \vDash\psi\left(\bar{c}'\right)\Longleftrightarrow f_{\bar{c}'}\text{ is a (definable) homeomorphism of }\left(X\left(N\right),\tau\left(N\right)\right)\\
 & \text{ \,\,\,\,\,\,\,\,\,\,\,\,\,\,\,\,\,\,\,\,\,\,\,\,\,\,\,\,\,\,\,\,\,\,\,\,\,and a (definable) }S_{\bar{c}'}\subseteq N^{k}\text{ with the affine topology}.
\end{align*}

Since $\mathcal{N}\vDash\exists\bar{y\;}\psi\left(\bar{y}\right)$,
then so does $\mathcal{M}$, and hence there exists $\bar{d}\sub M$
such that $f_{\bar{d}}:X\left(M\right)\rightarrow S_{\bar{d}}\left(M\right)$
is the desired homeomorphism. That is, Theorem \ref{thm: regular+connected implies affine}
holds also for $\mathcal{M}$. This ends the proof of both Claim 6
and Theorem \ref{thm: regular+connected implies affine}.
\end{proof}
We can now add to Theorem \ref{thm: TFAE affine} another equivalent
condition:
\begin{thm}
\label{thm: Condition =0000235 affine} Let $X\subseteq M^{n}$ be a definable bounded set, $\dim X=1$,
and let $\tau$ be a definable Hausdorff topology on $X$. Then the following
are equivalent:
\end{thm}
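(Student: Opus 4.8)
The plan is to reduce the statement to the two theorems already available, namely Theorem~\ref{thm: TFAE affine} and Theorem~\ref{thm: regular+connected implies affine}, so that the only genuinely new content is the equivalence between being definably homeomorphic to an affine set and the condition ``$\tau$ is regular and $X$ has finitely many definably connected components''. The point to exploit is that this condition refers only to $X$ itself, not to all of its definable subsets; so the work lies in showing that regularity upgrades ``$X$ has finitely many components'' to the full strength of condition (\ref{enu: subsets components}) of Theorem~\ref{thm: TFAE affine}. I would first dispose of the easy direction: if $(X,\tau)$ is definably homeomorphic to a definable $Y\subseteq M^{k}$ with its affine topology, then since the affine topology on $M^{k}$ is regular and regularity passes to subspaces and is a homeomorphism invariant, $\tau$ is regular; and since by o-minimality (cell decomposition) the definable set $Y$ has finitely many definably connected components with respect to $\tau^{af}$, so does $X$ with respect to $\tau$. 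This yields the implication from the affine condition to the new condition, and simultaneously recovers all the conditions of Theorem~\ref{thm: TFAE affine}.

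The substantive direction is to pass from ``$\tau$ regular and $X$ has finitely many definably connected components'' to the affine condition. Assume $\tau$ is regular and write $X=X_{1}\sqcup\cdots\sqcup X_{m}$ as a decomposition into its finitely many definably connected components. As recorded after the definition of definably connected component, each $X_{i}$ is clopen, so $\tau|_{X_{i}}$ is an open subspace topology; it is therefore still definable, Hausdorff and regular, and $(X_{i},\tau|_{X_{i}})$ is definably connected by construction. Theorem~\ref{thm: regular+connected implies affine} then applies to each $X_{i}$ separately, giving a definable homeomorphism $g_{i}\colon(X_{i},\tau|_{X_{i}})\to(Y_{i},\tau^{af})$ for some definable $Y_{i}\subseteq M^{k_{i}}$.

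It remains to assemble the maps $g_{i}$ into a single definable homeomorphism of $(X,\tau)$ onto an affine set. I would pad all the $Y_{i}$ to a common ambient dimension $k=\max_{i}k_{i}$ by appending zero coordinates, and then push the $i$-th piece into its own affine slice, mapping $Y_{i}$ to $Y_{i}\times\{c_{i}\}\subseteq M^{k+1}$ for distinct constants $c_{1},\dots,c_{m}\in M$ chosen using the ordered-group structure. The set $Y=\bigsqcup_{i}\bigl(Y_{i}\times\{c_{i}\}\bigr)$ is then definable, each slice $Y_{i}\times\{c_{i}\}$ is clopen in $Y$ because the last coordinate separates the slices, and the map $g$ defined piecewise by the $g_{i}$ is a definable bijection $X\to Y$. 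Since each $X_{i}$ is $\tau$-clopen on the source side, each slice is $\tau^{af}$-clopen on the target side, and $g$ restricts to a homeomorphism on each clopen piece, $g$ is a homeomorphism of $(X,\tau)$ onto $Y$ with its affine topology. The equivalence with the remaining conditions of Theorem~\ref{thm: TFAE affine} is then immediate.

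The step I expect to be the main obstacle is the gluing: one must verify that the piecewise map is not merely a bijection but a genuine homeomorphism. This rests precisely on the clopenness of the definably connected components $X_{i}$ on the source side together with the mutual affine separation of the slices $Y_{i}\times\{c_{i}\}$ on the target side, which makes the two clopen decompositions match. The clopenness of definably connected components, already recorded in the text, is exactly what licenses the conclusion that a family of homeomorphisms defined on the pieces glues to a global homeomorphism; without it one could not rule out that the $\tau$-topology sees interactions across distinct components that the affine slicing destroys.
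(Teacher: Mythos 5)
Your proposal is correct and follows essentially the same route as the paper: reduce to Theorem~\ref{thm: regular+connected implies affine} on each of the finitely many clopen definably connected components, then reassemble the affine images in a common ambient space with disjoint affine closures (your explicit slicing by distinct last coordinates is just a concrete realization of the paper's choice of $Y_i$ with $cl^{af}(Y_i)\cap cl^{af}(Y_j)=\emptyset$). No gaps.
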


\begin{enumerate}
\item $\left(X,\tau\right)$ is definably homeomorphic to a definable set
with its affine topology.
\item $\tau$ is regular, and $\left(X,\tau\right)$ has finitely
many definably connected components.
\end{enumerate}
\begin{proof}
$(1)\Rightarrow(2)$: This follows from the basic theory of o-minimal
structures (as is discussed in \cite{Dries}).

$(2)\Rightarrow(1)$: By assumption, $X$ is a disjoint union of definable
sets $X_{1},\ldots,X_{m}$, each open (hence closed) in $X$, and
definably connected with respect to $\tau$. Since $X$ is regular
so is each $X_{i}$ (with the induced $\tau$ topology). By Theorem
\ref{thm: regular+connected implies affine}, each $X_{i}$ is definably
homeomorphic to some $Y_{i}\subseteq M^{k_{i}}$ with its affine topology.
Let $k:=max_{1\leq i\leq m}k_{i}+1$, and embed each $Y_{i}$ in $M^{k}$.
Furthermore, we may choose the sets $Y_{i}$ such that
\[
cl^{af}\left(Y_{i}\right)\cap cl^{af}\left(Y_{j}\right)=\emptyset
\]
 for $i\neq j$. It follows that $X$ is definably homeomorphic to
$Y:=\bigsqcup_{i=1}^{m}Y_{i}$.
\end{proof}
A combination of Theorem \ref{thm: TFAE affine} and Theorem \ref{thm: Condition =0000235 affine}
gives us the Main theorem, as stated in the introduction:\\
\textbf{}\\
\textbf{Main theorem.}\emph{ Let $\mathcal{M}$ be an o-minimal expansion
of a linearly ordered group. Let $X\subseteq M^{n}$ be a definable bounded set
with $\dim X=1$, and let $\tau$ be a definable Hausdorff topology
on $X$. Then the following are equivalent: }
\begin{enumerate}
\item \emph{$\left(X,\tau\right)$ is definably homeomorphic to a definable
subset of $M^{k}$ for some $k$, with its affine topology. }
\item \emph{There is a finite set $G\subseteq X$ such that every $\tau$-open
subset of $X\setminus G$ is open with respect to the affine topology
on $X\setminus G$. }
\item \emph{Every definable subset of $X$ has finitely many definably connected
components, with respect to $\tau$.}
\item \emph{$\tau$ is regular and $X$ has finitely many definably connected
components, with respect to $\tau$.}\\
\end{enumerate}

We end with another theorem that is an immediate consequence of our
work towards Theorem \ref{thm: Condition =0000235 affine}:
\begin{thm}
Let $X\subseteq M^{n}$ be a definable bounded set with $\dim X=1$, and let
$\tau$ be a Hausdorff topology on $X$. Assume that $X$ has finitely
many locally isolated points, and finitely many points $x$ such that
$\mathcal{B}_{x}\sim\mathcal{B}_{x}^{^{\left[\,\,\right)}}$ or $\mathcal{B}_{x}\sim\mathcal{B}_{x}^{^{\left(\,\,\right]}}$.
If, in addition, $\left(X,\tau\right)$ has finitely many definably
connected components, then $\left(X,\tau\right)$ is definably homeomorphic
to a definable set with its affine topology.
\end{thm}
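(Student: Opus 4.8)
The plan is to notice that in the proof of Theorem~\ref{thm: regular+connected implies affine} regularity was invoked \emph{only} to establish Claim~1 (finitely many locally isolated points) and Claim~2 (finitely many points $x$ with $\mathcal{B}_{x}\sim\mathcal{B}_{x}^{^{\left[\,\,\right)}}$ or $\mathcal{B}_{x}\sim\mathcal{B}_{x}^{^{\left(\,\,\right]}}$), and that the present hypotheses grant exactly these two conclusions outright. The passage from definable connectedness to finitely many definably connected components is then handled as in the proof of $(2)\Rightarrow(1)$ of Theorem~\ref{thm: Condition =0000235 affine}.

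First I would place myself in the setting of the earlier sections by reducing, via a definable bijection and Lemma~\ref{lem: Kobi's}, to the case $X\subseteq M$; since such a bijection is an affine homeomorphism off a finite set, both finiteness hypotheses survive the reduction (altering each exceptional finite set by finitely many points is harmless). I would then reduce to the definably connected case: by hypothesis $X=X_{1}\sqcup\cdots\sqcup X_{m}$ with each $X_{i}$ a definably connected component, hence a definable clopen subset of $X$. Because each property in question depends only on arbitrarily small neighborhoods of $x$, and $X_{i}$ is $\tau$-open, one has $\mathcal{B}_{x}^{X_{i}}\sim\mathcal{B}_{x}$ for $x\in X_{i}$; so the points of each type lying in $X_{i}$ are exactly those of $X$ of that type in $X_{i}$, and each $X_{i}$ inherits both finiteness hypotheses. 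Thus every $(X_{i},\tau|_{X_{i}})$ is bounded, one-dimensional, Hausdorff, definably connected, and satisfies both finiteness hypotheses.

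Next I would run the argument of Theorem~\ref{thm: regular+connected implies affine} on a fixed component $Y:=X_{i}$, with regularity deleted. Passing to the saturated extension $\mathcal{N}$ as in that proof, I note that finiteness of a definable subset of $M$ is a first-order property preserved under elementary extension, so the two finiteness hypotheses hold for $(Y(N),\tau(N))$; definable connectedness transfers by the same formula-pushing argument given at the start of that proof. Now Claim~1 and Claim~2 hold by hypothesis rather than by regularity, and, as the authors explicitly remark there, no further use of regularity is made: Claims~3--5 produce, under the assumption that $Y$ is not affine-homeomorphic, a proper definable clopen set $Z$, contradicting the definable connectedness of $Y$. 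Hence $Y(N)$---and then $Y$, by the transfer argument of Claim~6---is definably homeomorphic to a definable set with its affine topology.

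Finally I would recombine the pieces. With each $X_{i}$ definably homeomorphic to some $Y_{i}\subseteq M^{k_{i}}$ carrying its affine topology, I embed all the $Y_{i}$ into a common $M^{k}$ with pairwise disjoint affine closures, precisely as in the proof of $(2)\Rightarrow(1)$ of Theorem~\ref{thm: Condition =0000235 affine}; then $X$ is definably homeomorphic to $\bigsqcup_{i=1}^{m}Y_{i}$, which is affine. The one point demanding care is the assertion that the argument of Theorem~\ref{thm: regular+connected implies affine} past Claims~1 and~2 is genuinely regularity-free; this is already flagged there, so the remaining work---transferring the finiteness hypotheses to $\mathcal{N}$ and bookkeeping the component decomposition---is routine.
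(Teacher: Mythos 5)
Your proposal is correct and follows essentially the same route as the paper, which likewise observes that regularity enters the proof of Theorem~\ref{thm: regular+connected implies affine} only through Claims~1 and~2 (now supplied directly by hypothesis) and then invokes the component-recombination argument from Theorem~\ref{thm: Condition =0000235 affine}. Your write-up merely makes explicit the bookkeeping (locality of the hypotheses on clopen components, transfer of finiteness to the saturated extension) that the paper leaves implicit.
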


\begin{proof}
The proof follows from Theorem \ref{thm: Condition =0000235 affine},
since in the proof of Theorem \ref{thm: regular+connected implies affine}
(which leads to Theorem \ref{thm: Condition =0000235 affine}), we
only used regularity in Claim 1 and in Claim 2.
\end{proof}

\subsection{\label{subsec:Examples}Example }

The next example is of a definable Hausdorff topology that is definably
connected and not regular, thus it can not be definably homeomorphic
to a definable set with its affine topology. It demonstrates the necessity
of two different assumptions in two different principal theorems:

For Theorem \ref{thm: TFAE affine}, it shows that for the direction
$(2)\Rightarrow(1)$ it would have not been enough to only assume
that $\left(X,\tau\right)$ is definably connected. For Theorem \ref{thm: regular+connected implies affine},
it demonstrates that it is not enough to only assume that $\left(X,\tau\right)$
is Hausdorff and definably connected, and it is necessary to add the
assumption that $\tau$ is regular.
\begin{example}
Let $M\supseteq X=\left(0,1\right)\cup\left(1,2\right)\cup\left(2,3\right)$
be the union of three disjoint open-intervals. We define a definable
topology $\tau$ on $X$ via families of basic neighborhoods of points:
For $a\in\left(0,1\right)$, take
\[
\mathcal{B}_{a}:=\left\{\left\{ a\right\} \cup\left(a+1,b\right)\cup\left(c,a+2\right):a+1<b<2\,,2<c<a+2\right\} ,
\]
for $b\in\left(1,2\right)$, take
\[
\mathcal{B}_{b}:=\left\{ \left(b',b\right]:1<b'<b\right\} ,
\]
and for $c\in\left(2,3\right)$, take
\[
\mathcal{B}_{c}:=\left\{ \left[c,c''\right):c<c''<3\right\} .
\]

One can check that this topology is not regular. This fact, as well
as points of $\left(1,2\right)\cup\left(2,3\right)$ having half-open-intervals
neighborhoods, guaranties that $\tau$ is not definably homeomorphic
to the affine topology $\tau^{af}$.

The topology $\tau$ is definably connected since the only definable
clopen subsets of $X$, are $\emptyset$ and $X$ itself. Nevertheless,
$X$ contains definable subsets that are totally definably disconnected
(for instance, the only connected components of the interval $\left(1,2\right)$
are its singletons).
\end{example}

\subsection{Some final comments}


\begin{enumerate}
\item Note that the Main Theorem  fails, as stated, without the assumption that $X$ is bounded: Let $\mathcal{M}=\left(\mathbb{R};<,+\right)$
and let $X\subseteq\mathbb{R}^{2}$ be the union of the line $\mathbb{R}\times\left\{ 0\right\} $
and two other points $p_{1},p_{2}\in\mathbb{R}^{2}$. We can endow
$X$ with a topology $\tau$ which identifies it with $\left\{ -\infty\right\} \cup\mathbb{R}\cup\left\{ +\infty\right\} $, with $p_1,p_2$ identified with $-\infty$, $+\infty$, respectively. It is easy to verify that clauses (2)-(4) of Theorem \ref{thm: TFAE affine} hold. Let us see that (1) fails. 
Indeed,  it is not hard to verify that $\left(X,\tau\right)$ is definably compact
(note that here we make an exception and use the term ``definably
compact'' with respect to the topology $\tau$). Therefore, if $\left(X,\tau\right)$
were definably homeomorphic to a definable set $X'\subseteq\mathbb{R}^{n}$
with its affine topology, then $X'$ would have to be bounded. However,
in $\mathcal{M}$ there is no definable bijection between bounded
and unbounded sets.

\item Naturally, the ultimate goal of this project is to understand definable topologies in arbitrary dimension.
Towards this goal some modifications are needed in the Main Theorem. For example, it is not hard to see that Clause (2)
cannot hold as such since the set of points $a\in X$ at which the $\tau$-topology is different than  the  affine topology
can be infinite (but probably of dimension strictly smaller than $\dim X$). At any case, we do not know if the equivalence of (1), (3) and (4) in that theorem is still true for arbitrary dimension.

\end{enumerate}

\end{document}